\newtheorem{theorem}{Theorem}[section]
\newtheorem{lemma}[theorem]{Lemma}
\newtheorem{proposition}[theorem]{Proposition}
\theoremstyle{definition}
\newtheorem{definition}[theorem]{Definition}
\newtheorem{example}[theorem]{Example}
\theoremstyle{remark}
\newtheorem{remark}[theorem]{Remark}
\numberwithin{equation}{section}
\begin{document}

\title{Universal bounds on the entropy of toroidal attractors}


\author{P. Montealegre Mac\'ias}
\address{Facultad de Ciencias Matem\'aticas, Universidad Complutense de Madrid, 28040 Madrid, Espa\~na}
\curraddr{}
\email{pemont02@ucm.es}

\author{J.J. S\'anchez-Gabites}
\address{Facultad de Ciencias Matem\'aticas, Universidad Complutense de Madrid, 28040 Madrid, Espa\~na}
\curraddr{}
\email{jajsanch@ucm.es}
\thanks{The second author is funded by the Spanish Ministerio de Ciencia e Innovaci\'on through grant PGC2018-098321-B-I00 and the Ram\'on y Cajal programme (RYC2018-025843-I)}

\subjclass[2020]{Primary 57K99, 37B35, 37E99; Secondary 55N99}
\keywords{Attractor, toroidal set, entropy}
\date{}

\begin{abstract} A toroidal set is a compactum $K \subseteq \mathbb{R}^3$ which has a neighbourhood basis of solid tori. We study the topological entropy of toroidal attractors $K$, bounding it from below in terms of purely topological properties of $K$. In particular, we show that for a toroidal set $K$, either any smooth attracting dynamics on $K$ has an entropy at least $\log 2$, or (up to continuation) $K$ admits smooth attracting dynamics which are stationary (hence with a zero entropy).
\end{abstract}

\maketitle

\section{Introduction}

 It is well known that attractors can be very complicated both topologically and dynamically. One wonders to what extent these two sorts of complexity are related, and in particular whether ``topological strangeness'' of a compactum $K$ alone may already force a certain degree of complexity on \emph{any} possible attracting dynamics on $K$. With the usual interpretation that a strictly positive entropy $h$ is indicative of complicated dynamics, we are therefore interested in bounds of the form \begin{equation} \label{eq:bound} h(f|_K) \geq \log p(K) > 0\end{equation} where $f$ is any $\mathcal{C}^{\infty}$ diffeomorphism having the compactum $K$ as an attractor and, crucially, $p(K)$ depends \emph{only} on $K$ and \emph{not} on $f$. In other words, the bound $\log p(K)$ is universal across all $\mathcal{C}^{\infty}$ attracting dynamics on a given set $K$.

We shall argue (Subsection \ref{sub:why}) that, in a certain sense, the simplest class of compacta $K$ for which universal bounds of the form \eqref{eq:bound} are possible is that of \emph{toroidal} sets. These are compacta $K \subseteq \mathbb{R}^3$ which have a neighbourhood basis comprised of solid tori (\cite{hecyo1}). These tori can wind inside each other and be knotted, and so $K$ will usually be a very complicated continuum. Among toroidal sets one finds every smooth or polygonal knot, many wild knots, the usual embeddings of $n$--adic solenoids and generalized solenoids (where $n$ is replaced with a sequence $\{n_k\}$), knotted solenoids, every inverse limit of a self map of $\mathbb{S}^1$ (see \cite[\S3, pp. 375ff.]{kato1}), classical continua such as the Whitehead continuum, etc.

To each toroidal set $K$ (no dynamics yet) one can assign a collection of prime numbers $p_i \geq 2$ called its \emph{prime divisors} (\cite{hecyo2}). These capture purely topological properties of $K$; roughly speaking, they are related to the amount of ``self-winding'' of $K$. A toroidal set may have any number of prime divisors, possibly none or possibly infinite.

In order to analyze dynamics on toroidal sets we shall introduce the notion of the \emph{geometric degree} $d_{\mathcal{N}}(f;K)$ of a local homeomorphism $f$ which leaves a toroidal set $K$ invariant. We shall concentrate almost exclusively on the case when $K$ is an attractor for $f$. Then the geometric degree $d_{\mathcal{N}}(f;K)$ is a positive integer which, very roughly, counts the minimum number of ``angular preimages'' of a point. Unlike the ordinary (homological) degree, where preimages are counted as $\pm 1$ depending on the local behaviour of the map, in the geometric degree each preimage contributes $+1$ to the count. This makes it more sensitive than the ordinary degree while still retaining the usual properties of the latter. In particular it is invariant under homotopies or, more precisely, under continuations in the sense of Conley: if $(f_{\lambda})_{\lambda \in [0,1]}$ is a continuous family of local homeomorphisms, each having a toroidal attractor $K_{\lambda}$ (also varying continuously with $\lambda$), the prime divisors of the $K_{\lambda}$ and the geometric degrees $d_{\mathcal{N}}(f_{\lambda};K_{\lambda})$ are independent of $\lambda$.

The geometric degree provides a link between entropy and prime divisors. On the one hand, if $K$ is an attractor for a local homeomorphism $f$ then the prime divisors of the integer $d_{\mathcal{N}}(f;K)$, in the ordinary sense of arithmetics, coincide with the prime divisors $\{p_i\}$ of the toroidal set $K$ (Theorem \ref{thm:dattract}) and so in particular $d_{\mathcal{N}}(f;K) \geq \prod_i p_i$. On the other, when $f$ is $\mathcal{C}^{\infty}$ the geometric degree provides the lower bound $h(f|_K) \geq \log d_{\mathcal{N}}(f;K)$ for the entropy (Theorem \ref{thm:manning}). Therefore

\begin{equation} \label{eq:intro1} h(f|_K) \geq \log d_{\mathcal{N}}(f;K) \geq \log \prod_i p_i. \end{equation} 

Since the prime divisors $p_i$ depend only on $K$ but not on the dynamics, this bound is of the desired form \eqref{eq:bound} with $p(K) = \prod_i p_i$. In particular as soon as $K$ has at least one prime divisor we get $h(f|_K) \geq \log 2 > 0$. Notice that the bound $\log 2$  is universal not only across all $\mathcal{C}^{\infty}$ attracting dynamics on a given $K$ but in fact across all toroidal attractors with at least one prime divisor. In this sense it is sharp, since it is attained by the standard embedding of the dyadic solenoid in $\mathbb{R}^3$ with its usual dynamics.

If a toroidal attractor $K$ has no prime divisors then \eqref{eq:intro1} reduces to the trivial bound $h(f|_K) \geq \log 1 = 0$ and, given the universal nature of the bound, the (somewhat wild) question arises of whether $K$ actually supports some attracting dynamics with a zero entropy. Since the geometric degree and the prime divisors are invariant under continuation, we may also ask whether $K$ can be continued to an attractor with a zero entropy. It turns out that the answer is in the affirmative: we shall prove that if $K$ is a toroidal attractor with no prime divisors, then it can be continued to a smooth knot $\gamma$ which is an attractor with stationary dynamics (Theorem \ref{thm:continuation}). For this result the use of the geometric degree instead of the ordinary one is crucial.

Summing up, up to continuation and for smooth dynamics, we obtain a neat alternative:

\begin{theorem} \label{thm:intro} Let $K$ be a toroidal attractor for a $\mathcal{C}^{\infty}$ diffeomorphism of $\mathbb{R}^3$.
\begin{itemize}
    \item[(i)] If $K$ has some prime divisor, then $K$ and all of its $\mathcal{C}^{\infty}$ toroidal continuations have an entropy at least $\log 2$.
    \item[(ii)] If $K$ has no prime divisors, then $K$ has a $\mathcal{C}^{\infty}$ continuation to a smooth knot which is an attractor with stationary dynamics (hence with a zero entropy).
\end{itemize}
\end{theorem}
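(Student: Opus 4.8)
The plan is to deduce the statement by assembling the three main technical results quoted in the introduction, so the argument should be short; the real content lives in those results and in the continuation invariance of the geometric degree.

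For part (i), I would argue as follows. Let $f$ be any $\mathcal{C}^{\infty}$ diffeomorphism of $\mathbb{R}^3$ having $K$ as an attractor, and suppose $K$ has at least one prime divisor $p \geq 2$. By Theorem \ref{thm:dattract} the arithmetic prime divisors of the positive integer $d_{\mathcal{N}}(f;K)$ are exactly the prime divisors of $K$; in particular $p \mid d_{\mathcal{N}}(f;K)$, whence $d_{\mathcal{N}}(f;K) \geq p \geq 2$. Theorem \ref{thm:manning} then yields $h(f|_K) \geq \log d_{\mathcal{N}}(f;K) \geq \log 2$, which is just \eqref{eq:intro1} specialised to this situation. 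To extend the conclusion to every $\mathcal{C}^{\infty}$ toroidal continuation $K'$ of $K$, I would invoke the continuation invariance of the prime divisors recalled in the introduction: $K'$ has the same prime divisors as $K$, hence at least one, and the argument just given applies verbatim to any $\mathcal{C}^{\infty}$ diffeomorphism having $K'$ as attractor. I would be careful to read ``$K$ has an entropy at least $\log 2$'' as ``every $\mathcal{C}^{\infty}$ diffeomorphism of $\mathbb{R}^3$ having $K$ as an attractor restricts to $K$ with entropy $\geq \log 2$'', in accordance with the phrasing of the abstract, and note that the displayed chain of inequalities establishes exactly this.

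For part (ii), the content is entirely Theorem \ref{thm:continuation}: if $K$ has no prime divisors, it can be continued, through toroidal attractors, to a smooth knot $\gamma$ carrying stationary attracting dynamics. Since a stationary map is the identity on its invariant set, its restriction to $\gamma$ has zero topological entropy, which completes the alternative.

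I do not expect any genuine obstacle in this final assembly; all the difficulty has been discharged into Theorems \ref{thm:dattract}, \ref{thm:manning} and \ref{thm:continuation} and into the invariance statements for $d_{\mathcal{N}}(f;K)$ and the $p_i$. The only points deserving a sentence of care are (a) making explicit that ``all $\mathcal{C}^{\infty}$ toroidal continuations'' is handled by invariance of the prime divisors rather than re-proved from scratch, and (b) recording that $d_{\mathcal{N}}(f;K)$ is a positive integer so that divisibility by a prime forces it to be $\geq 2$.
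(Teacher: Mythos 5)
Your assembly is correct and is exactly what the paper does: part (i) is \eqref{eq:intro1} (Theorems \ref{thm:dattract} and \ref{thm:manning}) combined with the continuation-invariance of prime divisors from Proposition \ref{prop:local}/Theorem \ref{thm:invariance}, and part (ii) is Theorem \ref{thm:continuation}. Your two ``points deserving care'' — reading the entropy claim as a universal statement over all $\mathcal{C}^{\infty}$ attracting dynamics on $K$, and noting that $d_{\mathcal{N}}(f;K)$ is a positive integer divisible by $p$ hence $\geq 2$ — are precisely the observations the paper relies on.
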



We finally discuss briefly the outline of the paper. In Section \ref{sec:background} we recall some definitions and in particular that of the geometric index of a solid torus inside another. This was introduced by Schubert (\cite{schubert}) to study knots and is basic for our constructions. In Section \ref{sec:degree} we define the prime divisors of a toroidal set $K$ and the geometric degree $d_{\mathcal{N}}(f;K)$, and relate the two when $K$ is an attractor. We also prove that they are invariant under continuation. In Section \ref{sec:noprimedivisors} we show that a toroidal attractor $K$ with no prime divisors can be continued to a smooth knot. Section \ref{sec:entropytoroidal} relates the entropy of a toroidal attractor to the geometric degree and hence also to the prime divisors of the attractor. It is here where smoothness assumptions on the dynamics are needed for the first time in order to apply a bound of Yomdin on the entropy. Section \ref{sec:concluding} contains a comparison between the geometric and the ordinary degree as well as an open question related to the smoothness assumption on the dynamics.

We have included two appendices. Appendix \ref{app:index} contains some technicalities about the definition of the geometric index. Appendix \ref{app:yomdin} is a brief outline of Yomdin's bound on entropy, tailored to the very particular case we use in this paper.

\section{Some background} \label{sec:background}

This section gathers some definitions and results that are needed for the rest of the paper. The most important are the definition and properties of the geometric index in Subsection \ref{subsec:geometricindex}.

\subsection{Tame solid tori} \label{subsec:tameness} A solid torus $T$ is a topological space homeomorphic to $\mathbb{D}^2 \times \mathbb{S}^1$; any homeomorphism $h : \mathbb{D}^2 \times \mathbb{S}^1 \longrightarrow T$ is called a framing of $T$. The image under $h$ of $\mathbb{D}^2 \times \{*\}$ is called a meridional disk of $T$ and its boundary (which is a simple closed curve in $\partial T$) a meridian of $T$. The image under $h$ of the curve $\{0\} \times \mathbb{S}^1$ is a called a core of $T$. One should bear in mind that meridional disks can lie in a very crooked way inside $T$; picturing them as being ``radial'' is misleading.

In dimensions three (and higher) there exist wild tori, and these are both inconvenient and unnecessary for our purposes. Henceforth we will almost always confine ourselves to working with tame tori only. Recall that a compact subset $L \subseteq \mathbb{R}^3$ is tame if there exists a homeomorphism $g : \mathbb{R}^3 \longrightarrow \mathbb{R}^3$ which sends $L$ onto a polyhedron, and semilocally tame if there exist an open neighbourhood $U$ of $L$ and a homeomorphism $g : U \longrightarrow g(U) \subseteq \mathbb{R}^3$ which sends $L$ onto a polyhedron. Evidently a tame set $L$ is also semilocally tame, and a deep theorem of Moise states that the converse is also true (\cite[Theorem 3, p. 254]{moise2}).

 In the sequel we will make use of the following facts:

(i) Any solid torus $T \subseteq \mathbb{R}^3$ can be perturbed an arbitrarily small quantity to make it tame. To prove this one picks a framing $h : \mathbb{D}^2 \times \mathbb{S}^1 \longrightarrow T \subseteq \mathbb{R}^3$ and resorts to a classical approximation theorem (see for example \cite[Theorem 2, p. 251]{moise2}) which ensures that for any $\epsilon > 0$ there is another embedding $h' : \mathbb{D}^2 \times \mathbb{S}^1 \longrightarrow \mathbb{R}^3$ that is $\epsilon$--close to $h$ and is piecewise linear, so in particular its image $T'$ is a polyhedral solid torus $\epsilon$--close to $T$.

(ii) Suppose $f$ is a homeomorphism of $\mathbb{R}^3$ or, more generally, a local homeomorphism of $\mathbb{R}^3$, i.e. a homeomorphism $f : U \longrightarrow f(U) \subseteq \mathbb{R}^3$ where $U$ is open in $\mathbb{R}^3$. If $T \subseteq U$ is a tame solid torus, then $f(T)$ is also tame. This follows because $f(T)$ is evidently semilocally tame and, as mentioned above, it is therefore tame.

\subsection{The geometric index} \label{subsec:geometricindex}  

We recall the notion of the geometric index of a solid torus inside another one. This was introduced by Schubert \cite[\S 9]{schubert} (who called it ``order'') and will be essential in the paper. The original definition and results by Schubert are set up for polyhedral tori, but we need a purely topological version. This is not entirely straightforward because of the existence of wild objects in $3$ dimensions, and sorting this out requires some nontrivial results from geometric topology. We have deferred the technical details to Appendix \ref{app:index} since the geometric content of the definition is very intuitive.

We first motivate the definition informally. Let $T_0$ be a solid torus containing a simple closed curve $\alpha$ in its interior. Let $\gamma$ be a core of $T_0$, so that $\alpha$ is homologous to a multiple of $\gamma$; say $\alpha = m \gamma$ with $m \in \mathbb{Z}$. The integer $m$ can be computed (up to a sign) by counting the number of times that $\alpha$ intersects any meridional disk $D$ of $T_0$. This has to be performed algebraically: each point in $D \cap \alpha$ contributes $\pm 1$ depending on the sense in which $\alpha$ crosses $D$ at that point. This algebraic count is independent of the disk $D$ used to perform it. We shall call it the homological winding number of $\alpha$ in $T_0$ and denote it by $m(\alpha \subseteq T_0)$.

The geometric index of $\alpha$ inside $T_0$ is defined in a similar manner but counting the points of intersection in $D \cap \alpha$ geometrically; i.e. each contributes a $+1$ so the count is just the cardinality $|D \cap \alpha|$. This depends on the disk $D$, and one defines the geometric index $N(\alpha \subseteq T_0)$ by minimizing $|D \cap \alpha|$ over all meridional disks $D$. It is clear that $N(\alpha \subseteq T_0) \geq |m(\alpha \subseteq T_0)|$, and the inequality can be strict. For example the Whitehead curve shown in the left panel of Figure \ref{fig:whitehead} has $m=0$ but geometric index $2$.

We now proceed to the formal definitions. For our purposes it is more convenient to define the geometric index of a solid torus (rather than a simple closed curve) inside another one, but the geometric motivation for the definition is the one just outlined.

Let $T_0 \subseteq \mathbb{R}^3$ be a solid torus and $T_1$ another solid torus contained in the interior of $T_0$. We assume these to be tame. Let $D$ be a meridional disk of $T_0$. We say that $D$ is transverse to $T_1$ if there exist a neighbourhood $N$ of $D$ in $T_0$ and a homeomorphism $h: (N,N \cap T_1) \longrightarrow (\mathbb{D}^2, D_1 \cup \ldots \cup D_r) \times [-1,1]$ where:
\begin{itemize}
    \item[(i)] the $D_i$ are pairwise disjoint closed disks.
    \item[(ii)] $h$ carries $D$ onto $\mathbb{D}^2 \times \{0\}$.
\end{itemize}

This definition is intended to provide a local model for the intersection $D \cap T_1$. Observe that (ii) implies that $D \cap T_1$ consists of $r$ disjoint closed disks (the preimages of the $D_i \times \{0\}$ under $h$). These are \emph{not} assumed to be meridional disks of $T_1$, in contrast to the next definition.

 \begin{definition} \label{defn:Ntop} The geometric index of $T_1$ in $T_0$ is the minimum $r$ such that there exists a meridional disk $D$ of $T_0$ which is transverse to $T_1$ and such that $D \cap T_1$ consists of $r$ meridional disks of $T_1$. We denote this number by $N(T_1 \subseteq T_0)$.
\end{definition}

At this stage it is not even clear that $N(T_1 \subseteq T_0)$ be well defined in general, since perhaps no meridional disk transverse to $T_1$ exists at all. However, for tame tori as we are considering the geometric index is indeed well defined. Moreover, it has the following three fundamental properties: \label{pg:properties}

(P1) If $(T_0, T_1)$ and $(T'_0, T'_1)$ are homeomorphic pairs, then $N(T_1 \subseteq T_0) = N(T'_1 \subseteq T'_0)$.

(P2) $N(T_1 \subseteq T_0) = 0$ if and only if there exists a tame ball $B$ such that $T_1 \subseteq B \subseteq T_0$.

(P3) The geometric index is multiplicative: if $T_2 \subseteq T_1 \subseteq T_0$, then \[N(T_2 \subseteq T_0) = N(T_2 \subseteq T_1) \cdot N(T_1 \subseteq T_0).\]

The first property is direct from the definition, since the notions of meridional disk and transversality are preserved by homeomorphisms. Proving the other two involves some woodworking which is best done when the tori are polyhedral. They are proved, under this assumption, by Schubert (\cite[Hilfssatz 3 and Satz 3, pp. 171 and 175]{schubert}). We shall show in Appendix \ref{app:index} how to translate those results to our topological setting. Here we discuss briefly (P2) because it is quite plausible and also provides a good illustration of why the geometric index is more powerful than the winding number $m$. If the geometric index $N(T_1 \subseteq T_0)$ is zero, there exists a meridional disk of $T_0$ which does not meet $T_1$. Then cutting $T_0$ along this meridional disk produces a ball $B$ between $T_1$ and $T_0$. Conversely, should such a ball $B$ exist, one can shrink it inwards by an ambient isotopy $G_t$ of $T_0$ to a tiny size and find a meridional disk $D$ disjoint from $G_1(B)$. Running the isotopy in the reverse produces a meridional disk disjoint from $B$ and hence from $T_1$. Notice that (P2) is certainly not true for the winding number $m$, and again the Whitehead curve of Figure \ref{fig:whitehead} is a classical example.

We finish by returning to the index $N(\alpha \subseteq T_0)$ of a simple closed curve $\alpha$ inside a solid torus $T_0$. This can be defined just as $N(T_1 \subseteq T_0)$ with the obvious adaptations: in the local model of transversality one replaces the disks $D_i$ with points $p_i$ and in Definition \ref{defn:Ntop} one removes the condition that the $D_i$ be meridional disks of $T_0$. The resulting definition agrees with the informal one given at the beginning of this section. It can be shown that $N(\gamma \subseteq T_1) = N(T_0 \subseteq T_1)$ whenever $\gamma$ is a core of $T_0$. (In fact in Schubert's original work the geometric index is defined first for curves, then for solid tori through this relation).

\subsection{Dynamics} The following definitions are standard and can be found, for example, in \cite{franksricheson} or \cite{katokhasselblatt}.
\medskip

{\it Attractors.} Suppose $f$ is a homeomorphism. A compact set $N$ such that $f(N) \subseteq {\rm int}\ N$ is called a trapping region for $f$. The attractor defined by that trapping region is the set $K := \bigcap_{n \geq 0} f^n(N)$.

A set $P$ is positively invariant if $f(P) \subseteq P$ and invariant if $f(P) = P$. An attractor $K$ has a neighbourhood basis of compact positively invariant sets; namely the $\{f^n(N)\}_{n \geq 0}$. The attractor $K$ itself is invariant.

If the forward orbit of a point $x$ enters $N$ then it converges to $K$ asymptotically, this meaning that for every neighbourhood $V$ of $K$ there exists $n_0$ such that $f^n(x) \in V$ for $n \geq n_0$. The set of points with this property is called the basin of attraction of $K$ and is denoted by $\mathcal{A}(K)$. It is an open, invariant neighbourhood of $K$. One can easily check that in fact not only points in $\mathcal{A}(K)$ are attracted by $K$, but also compact sets as well. Explicitly: for every neighbourhood $V$ of $K$ and every compact set $C \subseteq \mathcal{A}(K)$ there exists $n_0$ such that $f^n(C) \subseteq V$ for all $n \geq n_0$.

An attractor can also be defined intrinsically (i.e. without reference to a trapping region) as a compact invariant set which attracts nearby points and has a neighbourhood basis of positively invariant sets. In turn, the latter condition can be replaced by requiring that $K$ be stable in the sense of Lyapunov. Thus the type of attractors we are considering are sometimes called stable attractors.

In this paper we will consider dynamics generated by a local homeomorphism of $\mathbb{R}^3$. By this we mean a map $f$ defined on an open subset $U \subseteq \mathbb{R}^3$ and which is a homeomorphism onto its image $f(U) \subseteq \mathbb{R}^3$. By the invariance of domain theorem it suffices to require that $f$ be continuous and injective. The definition of an attractor given above generalizes immediately to this situation.
\medskip

 {\it Continuations.} We now recall the notion of a ``continuation'' introduced by Conley \cite{conley1}, tayloring the definition to our particularly simple case of attracting dynamics.

Suppose we have a parametrized family of local homeomorphisms $f_{\lambda}$ all defined on some open set $U \subseteq \mathbb{R}^3$. Here $\lambda$ ranges in some set of parameters which for definiteness we shall take to be the interval $[0,1] \subseteq \mathbb{R}$. Formally, we consider a continuous map $f : [0,1] \times U \longrightarrow \mathbb{R}^3$ such that every partial mapping $f_{\lambda} : U \longrightarrow \mathbb{R}^3$ defined by $f_{\lambda}(x) := f(\lambda,x)$ is injective (hence a local homeomorphism).

Now let $[0,1] \ni \lambda \longmapsto K_{\lambda}$ be a map where each $K_{\lambda}$ is an attractor for $f_{\lambda}$. We want to provide a reasonable definition of ``continuity'' of this map. Fix some $\lambda_0 \in [0,1]$ and let $N$ be a trapping region for $K_{\lambda_0}$, so that $f_{\lambda_0}(N) \subseteq {\rm int}\ N$. Evidently the latter inclusion still holds for $\lambda$ close enough to $\lambda_0$, and so $N$ contains an attractor for $f_{\lambda}$. We say that $\lambda \longmapsto K_{\lambda}$ is continuous at $\lambda_0$ if there is a neighbourhood $I \subseteq [0,1]$ of $\lambda_0$ such that for every $\lambda \in I$ the attractor $K_{\lambda}$ is precisely the one determined by the trapping region $N$ under the dynamics $f_{\lambda}$; that is, $K_{\lambda} := \bigcap_{n \geq 0} f_{\lambda}^n(N)$. It is easy to check that this condition is independent of $N$, although $I$ will usually depend on it. Of course, we say that $\lambda \longmapsto K_{\lambda}$ is continuous if it is continuous at every $\lambda_0 \in [0,1]$. This \emph{ad hoc} definition should be reasonable on intuitive grounds and will suffice for our purposes, but in fact one can set up a topology in the collection of attractors of the $f_{\lambda}$ so that this map is continuous in the ordinary sense of Topology.

A continuous mapping as just described is called a continuation, or a continuation from $K_{\lambda =0}$ to $K_{\lambda = 1}$. Each intermediate $K_{\lambda}$ is also called a continuation of $K_{\lambda =0}$.
\medskip

{\it Entropy.} We also recall the definition of ``topological entropy''. For this purpose, we consider a compact metric space, $(X,\operatorname{d})$, and a continuous map $g:X\longrightarrow X$.

For each $n\ge 0$ we define the metric
\begin{equation*}
    \operatorname{d}_n^g(x,y):=\max_{0\le i\le n}\{\operatorname{d}(g^i(x),g^i(y))\}
\end{equation*}
and, for a given $x\in X$ and the open ball $B(x,\epsilon):=\{y\in X\ |\ \operatorname{d}(x,y)<\varepsilon\}$, the $(n,\epsilon)$-dynamical ball
\begin{equation*}
    B_g(x,\epsilon,n)=\bigcap_{i=0}^n g^{-i}(B(x,\epsilon)).
\end{equation*}

The continuity of $g$ ensures that $B_g(x,n,\epsilon)$ is open. Since $X$ is compact, for any $\epsilon > 0$ there is a minimum number of $(n,\epsilon)$-dynamical balls needed to cover $X$. We denote this number by $S_{\operatorname{d}}(g,n,\epsilon)$. It can be interpreted as the minimum amount of initial conditions which, at a scale $\epsilon$ and up to time $n$, are representative of all possible trajectories of the system.

Now, the topological entropy of $g$ is defined as the following double limit:
\begin{equation*}
    h_{\operatorname{d}}(g):=\lim_{\varepsilon\rightarrow 0}\limsup_{n} \frac{1}{n} \log {S_{\operatorname{d}}(g,n,\varepsilon)}.
\end{equation*}

Although the definition of $h_{\operatorname{d}}$ involves the distance $\operatorname{d}$, it is actually invariant among equivalent distances which define the same topology. This can be proved explicitly or by means of a purely topological definition of the entropy that does not involve distances (as it was defined originally). Henceforth we will omit the distance $\operatorname{d}$ from the notation.

\subsection{Why toroidal sets?} \label{sub:why} It was mentioned in the Introduction that toroidal sets are the simplest for which universal bounds of the form \eqref{eq:bound} are possible. The following example justifies this. For part (iv), recall that a compactum $K \subseteq \mathbb{R}^n$ is called cellular if it has a neighbourhood basis comprised of cells; that is, of sets homeomorphic to the standard closed $n$--ball in $\mathbb{R}^n$.

\begin{example} \label{ex:omni} Let $K \subseteq \mathbb{R}^n$. If
\begin{itemize}
    \item[(i)] $K$ is an attractor for a flow, or
    \item[(ii)] $K$ is a global attractor, or
    \item[(iii)] $K$ is an attractor in dimension $n \leq 2$, or
    \item[(iv)] $K$ is a cellular set,
\end{itemize}
then $K$ can be realized as an attractor with stationary dynamics and so with a zero entropy.
\end{example}
\begin{proof} (i) One just needs to stop the flow on $K$. This is straightforward to do when the flow is $\mathcal{C}^1$ (by multiplying its vectorfield by a nonnegative function which vanishes precisely on $K$) but also true when the flow is merely continuous; see \cite{hecyo3} and references therein.

(ii) and (iv) Every global attractor is cellular and, conversely, every cellular set can be realized as a (global) attractor of a flow which is stationary on the set (these are results of Garay \cite{garay1}).

(iii) An attractor in $\mathbb{R}^n$ has a finitely generated \v{C}ech cohomology (\cite[Theorem 1, p. 2827]{pacoyo1}) and then in dimensions $n \leq 2$ a result of G\"unther and Segal (\cite[Corollary 3, p. 326]{gunthersegal1}) reduces the situation to that of flows.
\end{proof}

\begin{remark} Regarding smoothness of the dynamics, in case (i) the slowed down flow can evidently be made as smooth as the original flow, and in the remaining cases in dimension $n \neq 4$ the dynamics can be made $\mathcal{C}^{\infty}$. This follows from results by Grayson, Norton and Pugh (\cite{graysonpugh1} and \cite{nortonpugh1}).  
\end{remark}

Notice that the example shows that several well known strange attractors (for instance, the Lorenz or H{\'e}non attractors) can also be realized as attractors with stationary dynamics. Thus, ``dynamical strangeness'' is not always necessary for ``topological strangeness''.

Bearing in mind our goal of obtaining positive bounds on the entropy which are universal across all attracting dynamics on a given compactum $K$, the preceding example justifies that we focus on discrete dynamical systems and on dimension $n$ at least $3$. Also, we need to go beyond cellular sets. If one regards cells as handlebodies of genus zero, a natural next step in complexity consists in considering compacta $K \subseteq \mathbb{R}^3$ that have a neighbourhood basis comprised of handlebodies of genus one; i.e. solid tori. These are precisely toroidal sets.

\subsection{Toroidal sets and toroidal attractors} \label{subsec:basis} A compactum $K \subseteq \mathbb{R}^3$ is toroidal if it is not cellular and has a neighbourhood basis comprised of (not necessarily tame) solid tori $\{T_k\}$. One can always choose the $\{T_k\}$ to satisfy the following conditions:
\begin{itemize}
    \item[(i)] Each $T_k$ is a tame solid torus.
    \item[(ii)] Each $T_{k+1}$ is contained in the interior of $T_k$.
    \item[(iii)] The geometric indices $N(T_{k+1} \subseteq T_k)$ are all nonzero.
\end{itemize}

To show that these bases exist, first start with any neighbourhood basis $\{T_k\}$ of $K$ satisfying (ii). This exists by the definition of a toroidal set. Then one perturbs each $T_k$ to a tame torus $T'_k$ by a perturbation of size $\epsilon_k \longrightarrow 0$ chosen inductively to ensure that the tame $T'_k$ still contains $T_{k+1}$ in its interior and is contained in the interior of $T'_{k-1}$. The new tame tori $\{T'_k\}$ for a neighbourhood basis of $K$ which satisfy (i) and (ii). Finally, condition (iii) can be achieved by discarding  finitely many of the $T'_k$. Indeed, for each $k$ such that $N(T'_{k+1} \subseteq T'_k) = 0$ we have (by property (P2) of the geometric index) a ball $T'_{k+1} \subseteq B_k \subseteq T'_k$. Thus if $N(T'_{k+1} \subseteq T'_k) = 0$ for infinitely many $k$ then we have $\{B_k\}$ a neighbourhood basis for $K$ comprised of balls, so $K$ would be cellular. This contradicts the definition of a toroidal set. Hence $N(T'_{k+1} \subseteq T'_k) = 0$ for finitely many $k$, and so by discarding these we may achieve the three conditions enumerated above. From now on whenever we speak of ``a basis'' of a toroidal set we will always mean a neighbourhood basis that satisfies the conditions enumerated above.

When a toroidal set $K$ is an attractor for a local homeomorphism $f$ there is a natural way of constructing bases $\{T_k\}$. Since $K$ is toroidal and its basin of attraction is an open neighbourhood of $K$, it contains some solid torus which is a neighbourhood of $K$. This compact set is attracted by $K$, and so there exists $n_0$ such that $f^n(T) \subseteq {\rm int}\ T$ for every $n \geq n_0$. Thus $\{T_k\} := \{T, f^{n_0}(T), f^{2n_0}(T), \ldots\}$ is a neighbourhood basis of $K$ comprised of nested solid tori. If $T$ is taken to be tame (this can always be done by perturbing it to a polyhedral torus) then all the iterates $f^{kn_0}(T)$ are semilocally tame and hence tame. Thus $\{T_k\}$ satisfies properties (i) to (iii) listed above. It has the additional crucial property that each pair $(f^{(k+1)n_0}(T),f^{kn_0}(T))$ is homeomorphic to $(T,f^{n_0}(T))$ and so the geometric indices $N(T_{k+1} \subseteq T_k)$ are all equal. We will call a basis constructed in this manner a dynamically generated basis.

\begin{remark} \label{rem:nottori} A toroidal attractor has a neighbourhood basis of solid tori (because it is toroidal) and a neighbourhood basis of positively invariant sets (because it is an attractor), but there is in principle no guarantee that it has a basis of neighbourhoods which satisfy simultaneously both conditions; i.e. which are positively invariant solid tori. We do not know if this is generally true.
\end{remark}

We conclude by observing that not every toroidal set can be realized as an attractor, and characterizing topologically which can is an open problem. For example, (i) any smooth knot in $\mathbb{R}^3$ can be realized as a (toroidal) attractor with stationary dynamics, but (ii) there are toroidal knots which are smooth everywhere except at a single point and cannot be realized as attractors whatsoever; however (iii) there are toroidal knots which are nowhere smooth (in fact, they are wild everywhere in the sense of Geometric Topology) and they can again be realized as attractors with stationary dynamics. Thus a rather natural geometric gradation of complexity (smooth everywhere, smooth but at a single point, smooth nowhere) does not have a consistent dynamical counterpart.

\section{Prime divisors and the geometric degree \label{sec:degree}}

In this section we associate to each toroidal set a collection of prime numbers (possibly empty or infinite) called its prime divisors. They capture purely topological information about the self-winding of $K$. We also associate to each local homeomorphism between two toroidal sets a rational number called its geometric degree. This is then particularized to the case of a toroidal attractor of a local homeomorphism. Some motivation for the definitions to come can be found in Section \ref{sec:concluding}.

\subsection{Prime divisors} Let $K$ be a toroidal set and $\{T_k\}$ a basis for $K$ as described in Subsection \ref{subsec:basis}.

\begin{definition} \label{def:primedivisor} A prime divisor of $K$ is a prime number $p \geq 2$ which divides $N(T_{k+1} \subseteq T_k)$ for infinitely many $k$.
\end{definition}

By multiplicativity of the geometric index and the primality of $p$ this is equivalent to requiring that the number $N(T_k \subseteq T_1)$ contains arbitrarily large powers of $p$ as $k \rightarrow +\infty$. A yet equivalent condition is that for any fixed $k_1$, the numbers $N(T_k \subseteq T_{k_1})$ should contain arbitrarily large powers of $p$ as $k \rightarrow +\infty$.

\begin{proposition} \label{prop:pdivisor} Being a prime divisor is independent of the basis $\{T_k\}$.
\end{proposition}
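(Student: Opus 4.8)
The plan is to show that if $\{T_k\}$ and $\{T'_j\}$ are two bases for $K$ in the sense of Subsection \ref{subsec:basis}, then a prime $p$ is a prime divisor relative to the first basis if and only if it is relative to the second. The standard device is interleaving: since both are neighbourhood bases of the compactum $K$, I can pass to cofinal subsequences and relabel so that there is a strictly increasing sequence of inclusions
\[
T_{k_1} \supseteq T'_{j_1} \supseteq T_{k_2} \supseteq T'_{j_2} \supseteq \cdots,
\]
each torus contained in the interior of the preceding one. This uses only that each $T'_j$ contains some $T_k$ in its interior (as $\{T_k\}$ is a basis) and vice versa, together with property (ii) of bases. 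Note that passing to a subsequence of a basis does not change which primes are prime divisors: by the multiplicativity (P3) of the geometric index, $N(T_{k_{i+1}} \subseteq T_{k_i})$ is the product of the consecutive indices $N(T_{m+1}\subseteq T_m)$ over the skipped range, so $p$ divides infinitely many of the original consecutive indices if and only if the powers of $p$ dividing $N(T_{k_i} \subseteq T_{k_1})$ are unbounded — exactly the ``arbitrarily large powers of $p$'' reformulation noted after Definition \ref{def:primedivisor}, which is visibly subsequence-independent.

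With the interleaving in place, the key step is the observation that, by (P3) again, for each $i$
\[
N\big(T_{k_{i+1}} \subseteq T_{k_1}\big) \ \Big|\ N\big(T'_{j_{i}} \subseteq T_{k_1}\big)\cdot(\text{stuff}) \quad\text{and}\quad N\big(T'_{j_i}\subseteq T_{k_1}\big)\ \Big|\ N\big(T_{k_i}\subseteq T_{k_1}\big)\cdot(\text{stuff}),
\]
more precisely: from $T_{k_{i+1}} \subseteq T'_{j_i} \subseteq T_{k_i} \subseteq \cdots \subseteq T_{k_1}$ multiplicativity gives that $N(T'_{j_i} \subseteq T_{k_1})$ divides $N(T_{k_{i+1}}\subseteq T_{k_1})$ and is divisible by $N(T_{k_i}\subseteq T_{k_1})$. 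Hence the sequence of integers $N(T'_{j_i}\subseteq T_{k_1})$ is, up to interleaving, squeezed between two cofinal subsequences of $N(T_k \subseteq T_{k_1})$ in the divisibility order. Therefore the set of primes occurring with unbounded exponent in $\{N(T'_{j_i}\subseteq T_{k_1})\}_i$ coincides with the corresponding set for $\{N(T_{k_i}\subseteq T_{k_1})\}_i$. Since the latter is (by the subsequence-independence above) the set of prime divisors of $K$ relative to $\{T_k\}$, and the former — after noting that the base torus can be shifted from $T_{k_1}$ to $T'_{j_1}$, which only multiplies all indices by the fixed integer $N(T'_{j_1}\subseteq T_{k_1})$ and so does not affect unboundedness of prime-power exponents — is the set of prime divisors relative to $\{T'_j\}$, the two sets agree.

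I expect the main (though modest) obstacle to be purely bookkeeping: making the interleaving precise when the two bases need not be nested as given, and checking carefully that changing the fixed ``base torus'' $T_1$ versus $T'_1$, as well as passing to cofinal subsequences, genuinely leaves the prime-divisor set invariant. All of this is forced by multiplicativity (P3) together with the elementary fact that a fixed prime $p$ has unbounded exponent along a divisibility-increasing sequence of positive integers iff it does along any cofinal subsequence, and iff it does after multiplying every term by a fixed integer; once these three elementary lemmas about sequences of integers under divisibility are isolated, the proof is immediate.
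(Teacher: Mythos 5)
Your argument is correct and uses the same essential ingredient as the paper's proof: interleave the two neighbourhood bases by nested inclusion and apply multiplicativity (P3) of the geometric index to transfer divisibility. The paper does this more economically, without setting up a global interleaved chain or lemmas on integer sequences: given a power $p^n$, it picks $T_{k_1}\subseteq T'_1$, then $k$ with $p^n\mid N(T_k\subseteq T_{k_1})$, then $T'_\ell\subseteq T_k$, and concludes $p^n\mid N(T'_\ell\subseteq T'_1)$ from the single chain $T'_1\supseteq T_{k_1}\supseteq T_k\supseteq T'_\ell$.
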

\begin{proof} Let $\{T_k\}$ and $\{T'_{\ell}\}$ be two bases for $K$ and suppose that $p$ satisfies Definition \ref{def:primedivisor} for $\{T_k\}$. We show that it also satisfies the definition for $\{T'_{\ell}\}$. Pick $k_1$ so that $T_{k_1} \subseteq T'_1$. Now, for a given power $p^n$ of $p$ let $k \geq k_1$ be big enough so that $p^n | N(T_k \subseteq T_{k_1})$. Finally, pick $\ell$ so that $T'_{\ell} \subseteq T_k$. Then by multiplicativity of the geometric index applied to $T'_1 \supseteq T_{k_1} \supseteq T_k \supseteq T'_{\ell}$ we have $N(T_k \subseteq T_{k_1}) | N(T'_{\ell} \subseteq T'_1)$. Thus $p^n | N(T'_{\ell} \subseteq T'_1)$ and so $N(T_{\ell} \subseteq T_1)$ contains arbitrarily large powers of $p$ as $\ell \rightarrow +\infty$, as was to be shown.
\end{proof}

The prime divisors of a toroidal set were first defined in \cite{hecyo2} in a less elementary fashion. The definition given here is equivalent.

\begin{example} \label{ex:basic} (1) Suppose $K \subseteq \mathbb{R}^3$ is a smooth knot. Picking a closed tubular neighbourhood $T$ of $K$ and a diffeomorphism $(T,K) \cong (\mathbb{D}^2 \times \mathbb{S}^1, 0 \times \mathbb{S}^1)$ it is straightforward to construct a neighbourhood basis of $K$ which consists of nested, solid tori such that the geometric index of each consecutive pair is $1$. Thus any smooth knot is a toroidal set with no prime divisors. The same holds for a polygonal knot taking pl regular neighbourhoods instead of tubular neighbourhoods.

(2) Suppose $K$ is the intersection of a nested family of solid tori $\{T_i\}$ such that each $T_{i+1}$ winds monotonically (i.e. without doubling back) $n_i \geq 1$ times inside $T_i$. The monotonicity condition ensures that there exists a meridional disk of $T_i$ which intersects $T_{i+1}$ along $n_i$ disks (so $N(T_{i+1} \subseteq T_i) \leq n_i$) and at each of these intersections $T_{i+1}$ crosses the meridional disk in the same sense (so $m(T_{i+1} \subseteq T_i) = n_i$). Thus $N(T_{i+1} \subseteq T_i) = n_i$, and so the prime divisors of $K$ are exactly those prime numbers that divide infinitely many of the $n_i$. In particular when $n_i = n$ (as in the standard embedding of an $n$--adic solenoid) the prime divisors of $K$ are the prime divisors of $n$ without multiplicity.
\end{example}

A more interesting example is provided by Whitehead continua.

\begin{example} \label{ex:whiteheadtype} Start with an unknotted solid torus $T_1$ and place a thinner one $T_2$ in its interior along the black curve depicted in the left panel of Figure \ref{fig:whitehead}. Then place an even thinner torus $T_3$ inside $T_2$ following the same pattern (by this we mean that $(T_2,T_3)$ is homeomorphic to $(T_1,T_2)$) and so on. This produces a nested sequence of solid tori $\{T_k\}$ whose intersection $K$ is a toroidal set called a Whitehead continuum, an example of which is shown in the right panel of Figure \ref{fig:whitehead}. This prescription only determines the isotopy class of the core of $T_{k+1}$ inside $T_k$, so $K$ is far from being uniquely determined.

To compute the prime divisors of $K$ notice that $N(T_{k+1} \subseteq T_k) = N(T_2 \subseteq T_1)$ by construction (and invariance of the geometric index under homeomorphisms). Thus the prime divisors of $K$ are exactly the prime numbers that divide $N(T_2 \subseteq T_1)$. It is clear from the drawing that $N(T_2 \subseteq T_1) \leq 2$ since there are obvious meridional disks of $T_1$ which intersect $T_2$ in exactly two points. In fact one has $N(T_2 \subseteq T_1) = 2$; i.e. no meridional disk of $T_1$ transverse to $T_2$ intersects it in less than two disks. This seems intuitively reasonable but is not completely trivial to prove. A quick argument goes as follows. Since the geometric index $N(T_2 \subseteq T_1)$ and the homological winding number $m(T_2 \subseteq T_1)$ are both given by counting intersections with a meridional disk, one with a sign and the other without, they must differ in an even number. But $m(T_2 \subseteq T_1)$ is clearly zero, so $N(T_2 \subseteq T_1)$ must be even and hence either $0$ or $2$. The core of $T_2$ is linked with a meridian of $T$ (in the sense of knot theory) and therefore it cannot be contained in a ball in $T_1$, so $N(T_2 \subseteq T_1)$ must be nonzero, showing that $N(T_2 \subseteq T_1) = 2$ indeed. Thus $K$ has exactly the prime divisor $p = 2$.

\begin{figure}[h]
\centering
\begin{pspicture}(0,0)(12,6.5)
\rput[bl](-2.5,-6){\includegraphics{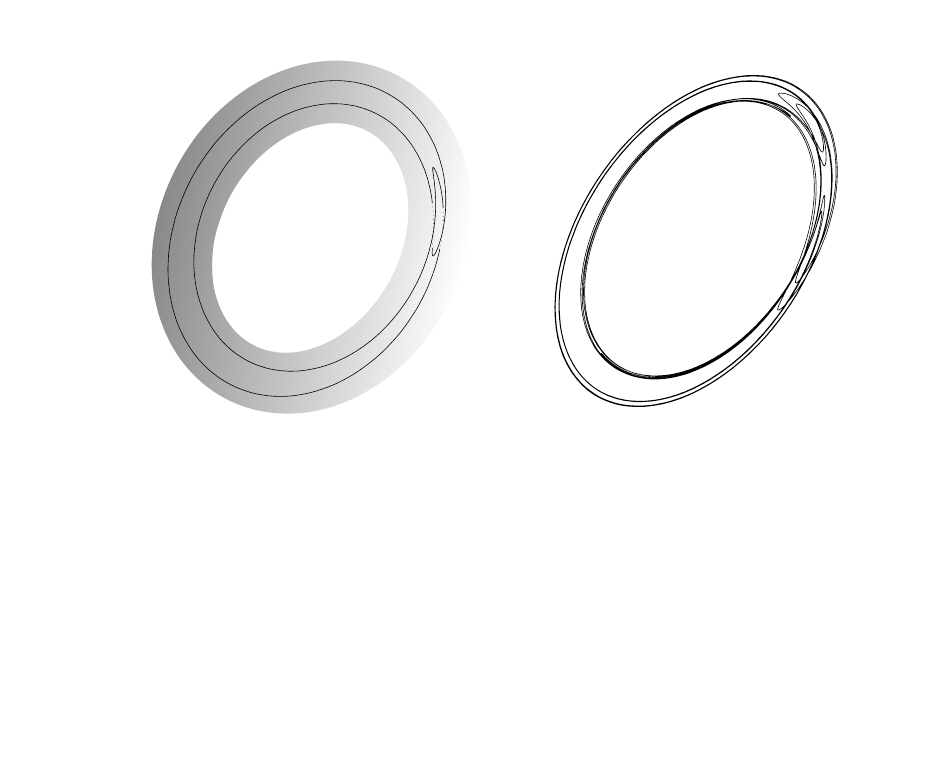}}
\psline[linewidth=0.1pt]{->}(5,0.6)(4.8,0.6)(4.2,1.4)
\rput[l](5.1,0.6){$\text{core of } T_2$}
\rput(11,1.6){$K$}
\rput(1,5.6){$T_1$}
\end{pspicture}
\caption{Whitehead's continuum}
\label{fig:whitehead}
\end{figure}

\end{example}

\subsection{The geometric degree} Let $K$ and $K'$ be two toroidal sets and $f$ a continuous map such that $f(K) = K'$ and $f$ is injective on some open neighbourhood $U$ of $K$. By invariance of domain this implies that $f$ is a homeomorphism onto the open set $U' = f(U)$.

Let $\{T_k\}$ and $\{T'_{\ell}\}$ be bases for $K$ and $K'$ respectively. Notice that for large enough $k$ the tori $T_k$ are contained in the domain of $f$ and in fact $\{f(T_k)\}$ is also a basis of $K'$: tameness of $f(T_k)$ is ensured by the discussion at the end of Subsection \ref{subsec:tameness} while the invariance of the geometric index under homeomorphisms ensures that $N(f(T_{k+1}) \subseteq f(T_k)) = N(T_{k+1} \subseteq T_k) \neq 0$.

Let $k$ be big enough so that $T_k$ is contained in the domain of $f$ and $f(T_k) \subseteq T'_1$. We define a rational number by \begin{equation} \label{eq:deg} d := \frac{N(f(T_k) \subseteq T'_1)}{N(T_k \subseteq T_1)} \in \mathbb{Q}. \end{equation} The denominator is nonzero because $N(T_k \subseteq T_1)$ is the product of the geometric indices of the consecutive pairs $T_2 \subseteq T_1$, $T_3 \subseteq T_2$, etc. and these are nonzero by definition of a basis.

\begin{proposition} $d$ does not depend on $k$.
\end{proposition}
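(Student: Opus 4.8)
The plan is to compare the value of $d$ computed with two admissible indices $k < k'$ by passing through the intermediate torus $T_k$ and invoking the multiplicativity of the geometric index (property (P3)). First I would fix notation: call $k$ \emph{admissible} if $T_k$ lies in the domain of $f$ and $f(T_k) \subseteq T'_1$. This property is upward closed, since $T_{k'} \subseteq T_k$ forces $f(T_{k'}) \subseteq f(T_k) \subseteq T'_1$; so to prove the proposition it suffices to show that the quotient in \eqref{eq:deg} takes the same value at any admissible $k$ and at any larger admissible $k'$. Note that all the tori occurring below are tame — the $T_j$ by the definition of a basis, the $f(T_j)$ by the discussion at the end of Subsection \ref{subsec:tameness} — so every geometric index that appears is well defined, and moreover $N(T_{k'} \subseteq T_k)$ is a product of the nonzero consecutive indices of the basis, hence itself nonzero.

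From the nested inclusions $T_{k'} \subseteq T_k \subseteq T_1$ and $f(T_{k'}) \subseteq f(T_k) \subseteq T'_1$, property (P3) gives
\[ N(T_{k'} \subseteq T_1) = N(T_{k'} \subseteq T_k)\cdot N(T_k \subseteq T_1), \qquad N(f(T_{k'}) \subseteq T'_1) = N(f(T_{k'}) \subseteq f(T_k))\cdot N(f(T_k) \subseteq T'_1). \]
The crucial point is that $f$, being a homeomorphism of the open neighbourhood $U \supseteq K$ onto $U'$, restricts to a homeomorphism of pairs $(T_k, T_{k'}) \longrightarrow (f(T_k), f(T_{k'}))$; hence by property (P1),
\[ N(f(T_{k'}) \subseteq f(T_k)) = N(T_{k'} \subseteq T_k). \]
Dividing the two displayed products and cancelling this common nonzero factor yields
\[ \frac{N(f(T_{k'}) \subseteq T'_1)}{N(T_{k'} \subseteq T_1)} = \frac{N(f(T_k) \subseteq T'_1)}{N(T_k \subseteq T_1)}, \]
which is exactly the desired equality.

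I do not expect a genuine obstacle here; the argument is essentially bookkeeping with (P1) and (P3). The only points worth stating explicitly are that every torus involved is tame so that the indices make sense, that $N(T_{k'} \subseteq T_k)$ is nonzero so the cancellation is legitimate, and — the one substantive observation — that the homeomorphism needed to apply (P1) is simply $f$ itself restricted to $T_k$, which carries the subtorus $T_{k'}$ onto $f(T_{k'})$.
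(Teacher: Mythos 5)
Your argument is correct and is essentially the paper's own proof: both decompose the numerator and denominator via multiplicativity (P3) across the intermediate torus $T_k$ (resp.\ $f(T_k)$) and cancel the common factor $N(T_{k'} \subseteq T_k) = N(f(T_{k'}) \subseteq f(T_k))$ supplied by (P1). The only cosmetic difference is that the paper passes from $k$ to $k+1$ and lets the general case follow, whereas you compare $k$ and $k'$ directly.
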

\begin{proof} We provisionally denote $d$ by $d_k$ to reflect its dependence on $k$. Suppose that the condition $f(T_k) \subseteq T'_1$ holds. Then it also holds with $k+1$ instead of $k$, and to prove the proposition it suffices to show that $d_k = d_{k+1}$. We have \[d_{k+1} = \frac{N(f(T_{k+1}) \subseteq T'_1)}{N(T_{k+1} \subseteq T_1)}.\] Inserting $f (T_k)$ in the numerator and $T_k$ in the denominator using multiplicativity gives \[d_{k+1} = \frac{N(f(T_{k+1}) \subseteq T'_1)}{N(T_{k+1} \subseteq T_1)} = \frac{N(f(T_{k+1}) \subseteq f(T_{k})) N(f(T_{k}) \subseteq T'_1)}{N(T_{k+1} \subseteq T_{k}) N(T_{k} \subseteq T_1)}.\] By the invariance of the geometric index under homeomorphisms $N(f(T_{k+1}) \subseteq f(T_{k})) = N(T_{k+1} \subseteq T_{k})$ and so a cancellation occurs in the expression above, yielding $d_{k+1} = d_{k}$.
\end{proof}

We call $d$ the \emph{geometric degree} of $f$ with respect to the bases $\{T_k\}$ and $\{T'_{\ell}\}$ and denote it by $d_{\mathcal{N}}(f;\{T_k\},\{T'_{\ell}\})$. This degree is nonzero. This follows readily from Proposition \ref{prop:composition} below but can also seen directly. If $d_{\mathcal{N}}(f;\{T_k\},\{T'_{\ell}\}) = 0$ then $N(f(T_k) \subseteq T'_1) = 0$ for every $k$. For every $\ell$ we may choose $k$ large enough so that $f(T_k) \subseteq T'_{\ell}$, and then $0 = N(f(T_k) \subseteq T'_1) = N(f(T_k) \subseteq T'_{\ell}) N(T'_{\ell} \subseteq T'_1)$ implies $N(f(T_k) \subseteq T'_{\ell}) = 0$ since the second factor is nonzero by definition of a basis. But this implies that there is a ball $B_k$ between $f(T_k)$ and $T'_{\ell}$, and this being true for every $\ell$ would imply that $K'$ is cellular, a contradiction.

As with the usual degree, the geometric degree is multiplicative under composition. Suppose we have three toroidal sets $K$, $K'$, $K''$ with bases $\{T_k\}$, $\{T'_{\ell}\}$, $\{T''_m\}$ and local homeomorphisms $f,g$ that carry $K$ to $K'$ and $K'$ to $K''$ respectively.

\begin{proposition} \label{prop:composition} The geometric degree is multiplicative: \[d_{\mathcal{N}}(g \circ f;\{T_k\},\{T''_m\}) = d_{\mathcal{N}}(f;\{T_k\},\{T'_{\ell}\}) d_{\mathcal{N}}(g;\{T'_{\ell}\},\{T''_m\}).\]
\end{proposition}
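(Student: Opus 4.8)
The plan is to compute both sides of the claimed identity directly from the definition \eqref{eq:deg} of the geometric degree, choosing the indices in the three bases coherently so that all the relevant inclusions hold and multiplicativity of the geometric index (property (P3)) can be applied. The key point is that $g \circ f$ is again a local homeomorphism carrying $K$ to $K''$, so $d_{\mathcal{N}}(g \circ f;\{T_k\},\{T''_m\})$ is defined, and since the value is independent of the chosen index $k$ we are free to take $k$ as large as we wish.

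\begin{proof}
First choose $k$ large enough that $T_k$ lies in the domain of $f$ and that $f(T_k) \subseteq T'_1$; enlarging $k$ if necessary we may also assume $f(T_k)$ lies in the domain of $g$ and $(g \circ f)(T_k) \subseteq T''_1$. Next pick an index $\ell$ large enough that $T'_\ell$ lies in the domain of $g$, that $g(T'_\ell) \subseteq T''_1$, and that $f(T_k) \subseteq T'_\ell$ (this last is possible because $\{T'_\ell\}$ is a neighbourhood basis of $K'$ and $f(T_k)$ shrinks toward $K'$ as $k$ grows — concretely, after fixing $\ell$ we re-enlarge $k$ so that $f(T_k) \subseteq T'_\ell$, which does not disturb the earlier conditions since $T'_\ell \subseteq T'_1$). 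With these choices in place, by definition
\[
d_{\mathcal{N}}(g \circ f;\{T_k\},\{T''_m\}) = \frac{N\big((g \circ f)(T_k) \subseteq T''_1\big)}{N(T_k \subseteq T_1)}.
\]
Now $(g \circ f)(T_k) = g(f(T_k))$ and we have the nested chain of tori $g(f(T_k)) \subseteq g(T'_\ell) \subseteq T''_1$, all tame, so multiplicativity of the geometric index gives
\[
N\big(g(f(T_k)) \subseteq T''_1\big) = N\big(g(f(T_k)) \subseteq g(T'_\ell)\big) \cdot N\big(g(T'_\ell) \subseteq T''_1\big).
\]
By invariance of the geometric index under the homeomorphism $g$, the first factor equals $N(f(T_k) \subseteq T'_\ell)$. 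Substituting,
\[
d_{\mathcal{N}}(g \circ f;\{T_k\},\{T''_m\}) = \frac{N(f(T_k) \subseteq T'_\ell) \cdot N(g(T'_\ell) \subseteq T''_1)}{N(T_k \subseteq T_1)}.
\]
Finally multiply numerator and denominator by $N(T'_\ell \subseteq T'_1)$ and regroup:
\[
d_{\mathcal{N}}(g \circ f;\{T_k\},\{T''_m\}) = \frac{N(f(T_k) \subseteq T'_\ell)\, N(T'_\ell \subseteq T'_1)}{N(T_k \subseteq T_1)} \cdot \frac{N(g(T'_\ell) \subseteq T''_1)}{N(T'_\ell \subseteq T'_1)}.
\]
By multiplicativity of the geometric index applied to $f(T_k) \subseteq T'_\ell \subseteq T'_1$, the numerator of the first fraction is $N(f(T_k) \subseteq T'_1)$, so the first fraction is exactly $d_{\mathcal{N}}(f;\{T_k\},\{T'_\ell\})$, while the second fraction is exactly $d_{\mathcal{N}}(g;\{T'_\ell\},\{T''_m\})$ (computed with the admissible index $\ell$). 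This yields the claimed identity.
\end{proof}

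The only real bookkeeping obstacle is the order in which the indices are selected: one must first fix $k$ to control $f$, then fix $\ell$ to control $g$ and to absorb $f(T_k)$, and then possibly re-enlarge $k$ — and one has to make sure that this re-enlargement does not break any inclusion fixed earlier. The point that makes this harmless is that all the conditions imposed are monotone (shrinking $T_k$ or $T'_\ell$ preserves "contained in a fixed neighbourhood"), together with the already-established fact that the degree does not depend on the chosen index, which is what lets us freely pass to larger indices in each of the three factors. Everything else is a mechanical application of (P3) and of homeomorphism invariance of $N(\,\cdot\,)$.
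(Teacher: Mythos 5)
Your proof is correct and follows essentially the same route as the paper: decompose $N((g\circ f)(T_k)\subseteq T''_1)$ via multiplicativity over the chain $g(f(T_k))\subseteq g(T'_\ell)\subseteq T''_1$, use invariance of $N$ under the homeomorphism $g$, then insert $N(T'_\ell\subseteq T'_1)$ to regroup into the two degrees. The only cosmetic difference is the order of selecting indices (the paper fixes $\ell$ first and then $k$, avoiding the re-enlargement step you needed), but the mathematical content is identical.
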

\begin{proof} Choose $\ell$ so that $g(T'_{\ell}) \subseteq T''_1$ and then $k$ so that $f(T_{k}) \subseteq T'_{\ell}$. Then $(g \circ f)(T_{k}) \subseteq g (T'_{\ell})$, so again using the multiplicativity and invariance under homeomorphisms of the geometric index we have \begin{flalign*} N((g \circ f) (T_{k}) \subseteq T''_1) & = N((g \circ f)(T_{k} ) \subseteq g(T'_{\ell})) N(g(T'_{\ell}) \subseteq T''_1) \\ & = N(f(T_{k}) \subseteq T'_{\ell}) N(g(T'_{\ell}) \subseteq T''_1) \\ & = \frac{N(f(T_{k}) \subseteq T'_1)}{N(T'_{\ell} \subseteq T'_1)} N(g(T'_{\ell}) \subseteq T''_1) \\ & = N(f(T_{k})\subseteq T'_1) d_{\mathcal{N}}(g;\{T'_{\ell}\},\{T''_m\}).\end{flalign*} Dividing through by $N(T_{k} \subseteq T_1)$ gives \[\frac{N((g\circ f)(T_k)  \subseteq T''_1)}{N(T_k \subseteq T_1)} = d_{\mathcal{N}}(f;\{T_k\},\{T'_{\ell}\})  d_{\mathcal{N}}(g;\{T'_{\ell}\},\{T''_m\}).\] The left hand side is, by definition, $d_{\mathcal{N}}(g \circ f;\{T_k\},\{T''_m\})$, proving the desired equality.
\end{proof}

Now we particularize to the case when $K' = K$; i.e. $f$ is a local homeomorphism which leaves a toroidal set $K$ invariant. Then there is a natural choice of bases for the geometric degree of $f$; namely the same basis $\{T_k\}$ of $K$ for its role both as a source and target space.

\begin{proposition} \label{prop:d} Let $f$ be a local homeomorphism which leaves a toroidal set $K$ invariant. Then $d_{\mathcal{N}}(f;\{T_k\},\{T_k\})$ is independent of the basis $\{T_k\}$.
\end{proposition}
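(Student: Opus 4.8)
The plan is to deduce this from the multiplicativity of the geometric degree (Proposition \ref{prop:composition}), applied to the tautological factorisation $f = \mathrm{id} \circ f \circ \mathrm{id}$ of $f$ through the identity map of $K$, with the two bases $\{T_k\}$ and $\{T'_\ell\}$ serving as the intermediate bases. Concretely, given two bases $\{T_k\}$ and $\{T'_\ell\}$ for $K$ we must show $d_{\mathcal{N}}(f;\{T_k\},\{T_k\}) = d_{\mathcal{N}}(f;\{T'_\ell\},\{T'_\ell\})$. The identity $\mathrm{id} : \mathbb{R}^3 \longrightarrow \mathbb{R}^3$ is a local homeomorphism with $\mathrm{id}(K) = K$, so it has a geometric degree with respect to any ordered pair of bases of $K$; and directly from the defining formula \eqref{eq:deg} its degree from a basis to itself is $1$, since numerator and denominator then coincide.

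First I would extract the ``change-of-basis factor''. Put $c := d_{\mathcal{N}}(\mathrm{id};\{T_k\},\{T'_\ell\})$, which is nonzero because every geometric degree is. Applying Proposition \ref{prop:composition} to $\mathrm{id} \circ \mathrm{id}$, read as the composition $(K,\{T_k\}) \xrightarrow{\mathrm{id}} (K,\{T'_\ell\}) \xrightarrow{\mathrm{id}} (K,\{T_k\})$, gives $1 = d_{\mathcal{N}}(\mathrm{id};\{T_k\},\{T_k\}) = c \cdot d_{\mathcal{N}}(\mathrm{id};\{T'_\ell\},\{T_k\})$, so $d_{\mathcal{N}}(\mathrm{id};\{T'_\ell\},\{T_k\}) = c^{-1}$.

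Then I would factor $f$ as $(K,\{T'_\ell\}) \xrightarrow{\mathrm{id}} (K,\{T_k\}) \xrightarrow{f} (K,\{T_k\}) \xrightarrow{\mathrm{id}} (K,\{T'_\ell\})$ and apply Proposition \ref{prop:composition} twice, obtaining
\[ d_{\mathcal{N}}(f;\{T'_\ell\},\{T'_\ell\}) = d_{\mathcal{N}}(\mathrm{id};\{T'_\ell\},\{T_k\}) \cdot d_{\mathcal{N}}(f;\{T_k\},\{T_k\}) \cdot d_{\mathcal{N}}(\mathrm{id};\{T_k\},\{T'_\ell\}) = c^{-1}\, d_{\mathcal{N}}(f;\{T_k\},\{T_k\})\, c, \]
which equals $d_{\mathcal{N}}(f;\{T_k\},\{T_k\})$, as desired.

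I do not anticipate a genuine difficulty: all the substance is already packed into Proposition \ref{prop:composition} and the homeomorphism-invariance of the geometric index. The only thing to watch is the bookkeeping — at each invocation of Proposition \ref{prop:composition} one must keep straight which basis is the source and which the target, because the geometric degree of $\mathrm{id}$ genuinely depends on the \emph{ordered} pair of bases and $c^{-1} \neq c$ in general. An alternative, slightly longer route avoids the identity map altogether: one first checks that replacing a basis by a subsequence does not change $d_{\mathcal{N}}(f;-,-)$ (immediate from multiplicativity of the geometric index, exactly as in the proof that $d$ is independent of $k$), then interleaves $\{T_k\}$ and $\{T'_\ell\}$ into a common basis $\{S_j\}$ and compares both degrees to $d_{\mathcal{N}}(f;\{S_j\},\{S_j\})$; but the identity-map argument is shorter and cleaner.
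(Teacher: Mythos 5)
Your proof is correct and uses exactly the same device as the paper's: factor $f = \mathrm{id}\circ f\circ\mathrm{id}$, apply the multiplicativity of the geometric degree (Proposition \ref{prop:composition}), and observe that the two change-of-basis factors $d_{\mathcal{N}}(\mathrm{id};\{T'_\ell\},\{T_k\})$ and $d_{\mathcal{N}}(\mathrm{id};\{T_k\},\{T'_\ell\})$ cancel because their product equals $d_{\mathcal{N}}(\mathrm{id};-,-)$ from a basis to itself, which is trivially $1$. The bookkeeping with the ordered pairs of bases is handled correctly, so there is nothing to add.
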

\begin{proof} Let $\{T_k\}$ and $\{T'_{\ell}\}$ be two bases for $K$. Observe that by multiplicativity of the geometric degree \[d_{\mathcal{N}}({\rm Id};\{T'_{\ell}\},\{T_k\})  d_{\mathcal{N}}({\rm Id};\{T_k\},\{T'_{\ell}\}) = d_{\mathcal{N}}({\rm Id};\{T'_{\ell}\},\{T'_{\ell}\}) = 1\] where the last equality is trivial from the definition. Writing $f = {\rm Id} \circ f \circ {\rm Id}$, again by multiplicativity \[d_{\mathcal{N}}(f;\{T'_{\ell}\},\{T'_{\ell}\}) = d_{\mathcal{N}}({\rm Id};\{T'_{\ell}\},\{T_k\}) d_{\mathcal{N}}(f;\{T_k\},\{T_k\}) d_{\mathcal{N}}({\rm Id};\{T_k\},\{T'_{\ell}\})\] and the first and third terms on the right hand side cancel each other by the previous paragraph.
\end{proof}

The previous proposition justifies the following definition:

\begin{definition} \label{defn:Ndeg} If $f$ is a local homeomorphism which leaves a toroidal set $K$ invariant and $\{T_k\}$ is any basis for $K$, we call $d_{\mathcal{N}}(f;\{T_k\},\{T_k\}) \in \mathbb{Q}$ the geometric degree of $f$ (on $K$) and denote it by $d_{\mathcal{N}}(f;K)$.
\end{definition}

\subsection{The case of attractors} We now discuss the case when $K$ is an attractor for $f$ (and not merely an invariant set as in Proposition \ref{prop:d}). The following theorem is the main result in this section. It relates a purely topological property of $K$ (its prime divisors) to the geometric degree of any attracting dynamics on it.

\begin{theorem} \label{thm:dattract} Let $K$ be a toroidal attractor for a local homeomorphism $f$. Then:
\begin{itemize}
    \item [(i)] $K$ has only finitely many prime divisors $p_i$ (possibly none).
    \item [(ii)] $d_{\mathcal{N}}(f;K)$ is an integer whose prime divisors in the ordinary sense of arithmetic are exactly those of $K$ (perhaps with multiplicities).
\end{itemize}

As a consequence $d_{\mathcal{N}}(f;K) \geq \prod_i p_i$, and $d_{\mathcal{N}}(f;K) = 1$ if and only if $K$ has no prime divisors.
\end{theorem}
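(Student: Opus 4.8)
The plan is to use the dynamically generated basis for $K$, which exists because $K$ is a toroidal attractor. Concretely, starting from a tame solid torus neighbourhood $T$ of $K$ in the basin of attraction, we pass to a power of $f$ (call it $g := f^{n_0}$) so that $g(T) \subseteq \operatorname{int} T$ and set $T_k := g^{k-1}(T)$. Because each pair $(T_{k+1}, T_k)$ is homeomorphic to $(T_2, T_1) = (g(T), T)$ via $g^{k-1}$, the geometric indices $a := N(T_{k+1} \subseteq T_k)$ are all equal and by multiplicativity $N(T_k \subseteq T_1) = a^{k-1}$. I would first compute the geometric degree of $g$ with respect to this basis: taking $k$ large enough that $g(T_k) \subseteq T_1$, we have $g(T_k) = T_{k+1}$, so $d_{\mathcal N}(g;\{T_k\},\{T_k\}) = N(T_{k+1} \subseteq T_1)/N(T_k \subseteq T_1) = a^k/a^{k-1} = a$. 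Thus $d_{\mathcal N}(g;K) = a$, a positive integer. Since $g = f^{n_0}$, multiplicativity of the geometric degree under composition (Proposition \ref{prop:composition}, applied with all three bases equal to $\{T_k\}$) gives $a = d_{\mathcal N}(f;K)^{n_0}$, and since $d_{\mathcal N}(f;K)$ is a positive rational whose $n_0$-th power is the positive integer $a$, it must itself be a positive integer. (Positivity was already established before the statement of the theorem: the geometric degree is always nonzero, and the displayed computation shows it is a quotient of positive integers hence positive.)

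Next I would identify the prime divisors. For the dynamically generated basis, $N(T_{k+1} \subseteq T_k) = a$ for every $k$, so a prime $p$ divides $N(T_{k+1}\subseteq T_k)$ for infinitely many $k$ (in fact all $k$) precisely when $p \mid a = d_{\mathcal N}(g;K)$. By Proposition \ref{prop:pdivisor} the set of prime divisors of $K$ does not depend on the basis, so the prime divisors of $K$ are exactly the ordinary prime divisors of $a$. Since $a = d_{\mathcal N}(f;K)^{n_0}$, the prime divisors of $a$ coincide with those of $d_{\mathcal N}(f;K)$ (the $n_0$-th power changes only multiplicities, not which primes occur). This proves (ii): the ordinary prime divisors of the integer $d_{\mathcal N}(f;K)$ are exactly the prime divisors of $K$, up to multiplicities. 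Part (i) is then immediate, since $a$ (and hence $d_{\mathcal N}(f;K)$) is a single finite integer and has only finitely many prime factors.

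Finally, the consequences: since every prime divisor $p_i$ of $K$ divides $d_{\mathcal N}(f;K)$ and these are distinct primes, their product divides $d_{\mathcal N}(f;K)$, whence $d_{\mathcal N}(f;K) \geq \prod_i p_i$ (interpreting the empty product as $1$). And $d_{\mathcal N}(f;K) = 1$ iff it has no ordinary prime divisors iff $K$ has no prime divisors. The one point requiring a little care — and the main obstacle — is the step relating $d_{\mathcal N}(g;K)$ to $d_{\mathcal N}(f;K)$: the composition formula in Proposition \ref{prop:composition} is stated for a single map between toroidal sets with chosen bases, so I must check it applies verbatim when all three toroidal sets and all three bases coincide (they do, since $\{T_k\}$ is a legitimate basis for $K$ in every role), giving $d_{\mathcal N}(f^{n_0};K) = d_{\mathcal N}(f;K)^{n_0}$ by an easy induction on $n_0$. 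Everything else is bookkeeping with multiplicativity of the geometric index (property (P3)) and elementary arithmetic of prime factorizations.
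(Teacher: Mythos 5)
Your argument matches the paper's proof in all essentials: both pass to a power $g=f^{r}$ with a dynamically generated basis $T_k := g^{k-1}(T)$, observe that $N(T_{k+1}\subseteq T_k)$ is constant (call it $n$), compute $d_{\mathcal N}(g;K)=n$ by the interpolation/cancellation trick, use multiplicativity of the geometric degree to get $n = d_{\mathcal N}(f;K)^{r}$, and conclude integrality and the prime-divisor identification from the elementary fact that a positive rational whose $r$-th power is an integer is itself an integer. The only differences are notational ($n_0$ and $a$ for the paper's $r$ and $n$); the substance and the key lemmas invoked are the same.
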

\begin{proof} Let $T \subseteq \mathcal{A}(K)$ be a solid torus neighbourhood of $K$. Pick $r$ big enough so that $f^r(T) \subseteq T$ and set $n := N(f^r(T) \subseteq T)$. We are free to perform our computations in any convenient basis, and we choose the basis $\{T_k\}$ generated from $T$ by using the dynamics; namely $\{T \supseteq f^r(T) \supseteq f^{2r}(T) \supseteq \ldots\}$. Thus by definition $T_{k+1} = f^r (T_k)$, starting with $T_1 := T$.

(i) By construction each pair $T_{k+1} \subseteq T_k$ is homeomorphic to $f^r(T) \subseteq T$, and so $N(T_{k+1} \subseteq T_k) = N(f^r (T) \subseteq T) = n$ for every $k$. Thus a prime $p$ divides infinitely many of the $N(T_{k+1} \subseteq T_k)$ if and only if it divides $n$; in other words, the prime divisors of $K$ are precisely those of $n$. In particular, if $n = 1$ then $K$ has no prime divisors.

(ii) We first compute the geometric degree of $f^r$ instead of $f$. By definition  \[d_{\mathcal{N}}(f^r;K) = \frac{N(f^r(T_{k}) \subseteq T_1)}{N(T_{k} \subseteq T_1)} = \frac{N(T_{k+1} \subseteq T_1)}{N(T_k \subseteq T_1)}\] and, since $T_{k+1} \subseteq T_{k}$ by construction of the basis $\{T_k\}$, we may interpolate a $T_{k}$ in the numerator and cancel with the denominator to get $d_{\mathcal{N}}(f^r;K) = N(T_{k+1} \subseteq T_k) = n$.

By multiplicativity of the geometric degree we must have \[n = d_{\mathcal{N}}(f^r;K) = (d_{\mathcal{N}}(f;K))^r.\] An elementary argument shows that if an integer ($n$, which is an integer because it is a geometric index) has an $r$th root in $\mathbb{Q}$ (namely $d_{\mathcal{N}}(f;K)$) then in fact the root is an integer, so we deduce $d_{\mathcal{N}}(f;K) \in \mathbb{Z}$. Again using the equality $n = (d_{\mathcal{N}}(f;K))^r$ we see that the prime factors of $n$ must be exactly those of $d_{\mathcal{N}}(f;K)$.
\end{proof}

For later use we record here the following fact established in the proof of Theorem \ref{thm:dattract}:

\begin{remark} \label{rem:manning} If $T \subseteq \mathcal{A}(K)$ is any torus neighbourhood of $K$ and $r$ is such that $f^r(T) \subseteq T$, then $d_{\mathcal{N}}(f;K)^r = N(f^r(T) \subseteq T)$ and the prime divisors of $K$ are the prime divisors of $N(f^r(T) \subseteq T)$. When $T$ is positively invariant this is trivial from the definitions, but as mentioned in Remark \ref{rem:nottori} we do not know if a toroidal attractor always has a positively invariant neighbourhood which is a solid torus.
\end{remark}

\begin{example} \label{ex:whiteheadattractor} (1) A variation on the construction of Example \ref{ex:whiteheadtype} produces Whitehead continua which are automatically attractors. First fix a pair $(T_1,T_2)$ with the usual pattern. Now observe that there is a homeomorphism $f : \mathbb{R}^3 \longrightarrow \mathbb{R}^3$ that carries $T_1$ onto $T_2$. This must be the case since both are unknotted solid tori, but it is also easy to picture such an $f$. One starts with $T_1$, stretches it along some direction to obtain an tube shaped like an ellipse, then twists it one whole turn along its long axis, and finally folds it over. This makes the tube resemble the pattern shown in Figure \ref{fig:whitehead}. It is then just a matter of shrinking the diameter of the tube and moving it around to make it fit inside $T_1$. This whole procedure defines an isotopy of $\mathbb{R}^3$ whose final stage is the required homeomorphism $f$. Then setting $T_{k+1} := f^k(T_1)$ one obtains a Whitehead continuum $K := \bigcap_{k \geq 1} T_k$ which is by construction an attractor for $f$. We emphasize that the choice of $f$ is far from unique and can very well lead to different Whitehead continua or even to the same continuum but with essentially different dynamics. For example Garity, Jubran and Schori (\cite{gajuscho1}, \cite{gajuscho2}, \cite{jutesis}) show how to construct two homeomorphisms $f$ and $f'$ of $\mathbb{R}^3$ which both fit the framework just described and even have the same attractor $K$ but have the property that $f|_K$ is not transitive whereas $f'|_K$ is transitive (and in fact chaotic in the sense of Devaney).

(2) Whenever a local homeomorphism $f$ has a Whitehead continuum $K$ as an attractor, its geometric degree must be $d_{\mathcal{N}}(f;K) = 2,4,8, \ldots$ This is a consequence of Theorem \ref{thm:dattract} and the fact that $K$ has $2$ as its only prime divisor. All possible powers of $2$ can appear: for the specific construction of part (1) one has $d_{\mathcal{N}}(f;K) = 2$ (because $T_1$ is positively invariant and Remark \ref{rem:manning}.(1) applies with $r = 1$), and then the homeomorphisms $f^2, f^3, \ldots$ have the same continuum $K$ as an attractor and have geometric degrees $4,8,\ldots$
\end{example}

\subsection{Invariance under continuation} \label{subsec:continuation} Recall the setup for continuations: $U \subseteq \mathbb{R}^3$ is some open set and $f_{\lambda} : U \longrightarrow \mathbb{R}^3$ is a family of continuous, injective maps (hence homeomorphisms onto their image) which depends continuously on a parameter $\lambda \in [0,1]$.  We assume that $\lambda \longmapsto K_{\lambda}$ is a continuation of attractors for the $f_{\lambda}$. We begin by analyzing the local situation:

\begin{proposition} \label{prop:local} Let $\lambda \longmapsto K_{\lambda}$ be a continuation of attractors and suppose that $K_{\lambda_0}$ is toroidal. Then for $\lambda$ close enough to $\lambda_0$, the attractor $K_{\lambda}$ is also toroidal. Moreover, its prime divisors and the geometric degree of $f_{\lambda}$ at $K_{\lambda}$ are the same as those at $\lambda_0$.
\end{proposition}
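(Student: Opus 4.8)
The plan is to exploit the key feature of a continuation: the attractors $K_\lambda$, for $\lambda$ near $\lambda_0$, are all extracted from a \emph{single} trapping region $N$ via $K_\lambda = \bigcap_{n\ge 0} f_\lambda^n(N)$. First I would choose $N$ cleverly. Since $K_{\lambda_0}$ is toroidal and its basin of attraction is open, it has a solid torus neighbourhood $T$ inside $\mathcal A(K_{\lambda_0})$; perturbing $T$ we may take it tame (indeed polyhedral). Because $f_{\lambda_0}$ attracts compacta in its basin, there is $r$ with $f_{\lambda_0}^r(T)\subseteq \operatorname{int} T$. By continuity of $(\lambda,x)\mapsto f(\lambda,x)$ the inclusion $f_\lambda^r(T)\subseteq \operatorname{int} T$ persists for all $\lambda$ in a neighbourhood $I$ of $\lambda_0$. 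Shrinking $I$ if necessary so that it also lies in the interval on which $K_\lambda$ is the attractor determined by the trapping region $T$ (using the definition of continuity of $\lambda\mapsto K_\lambda$ together with the fact that the defining condition is independent of the trapping region), we get that for every $\lambda\in I$ the set $\{T_k\} := \{T, f_\lambda^r(T), f_\lambda^{2r}(T),\dots\}$ is a dynamically generated basis for $K_\lambda$ exactly as in Subsection \ref{subsec:basis}. In particular $K_\lambda$ is toroidal for $\lambda\in I$ (it has a neighbourhood basis of solid tori, and it is not cellular since its geometric indices are nonzero — this follows once we control them, see below).

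Next I would compute the two invariants from this basis. Set $n_\lambda := N\bigl(f_\lambda^r(T)\subseteq T\bigr)$. By Remark \ref{rem:manning} (or simply by redoing the computation in Theorem \ref{thm:dattract}), the prime divisors of $K_\lambda$ are exactly the ordinary prime divisors of $n_\lambda$, and $d_{\mathcal N}(f_\lambda;K_\lambda)^r = n_\lambda$. So the whole proposition reduces to showing that the single integer $n_\lambda = N\bigl(f_\lambda^r(T)\subseteq T\bigr)$ is independent of $\lambda$ on $I$ (possibly after shrinking $I$ once more). The idea is that the geometric index is invariant under homeomorphisms of pairs (property (P1)), and that as $\lambda$ varies continuously the pair $\bigl(T, f_\lambda^r(T)\bigr)$ varies by an ambient isotopy, hence the pairs at nearby parameters are homeomorphic. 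Concretely, fix $\lambda_1\in I$; I want a neighbourhood $J\ni\lambda_1$ and, for each $\lambda\in J$, a homeomorphism of $\mathbb R^3$ carrying $\bigl(T,f_{\lambda_1}^r(T)\bigr)$ onto $\bigl(T,f_\lambda^r(T)\bigr)$. Then $n_\lambda = n_{\lambda_1}$ on $J$, so $\lambda\mapsto n_\lambda$ is locally constant on the connected set $I$, hence constant; this gives the claim.

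The main obstacle is producing that ambient homeomorphism, i.e. showing the tori $f_\lambda^r(T)$ sit inside $T$ in a way that varies ``isotopically'' with $\lambda$. The map $g_\lambda := f_\lambda^r$ is a local homeomorphism defined near $T$ and depends continuously on $\lambda$; the embeddings $g_\lambda|_{T}:T\hookrightarrow \operatorname{int} T$ form a continuous (in $\lambda$) family of embeddings of the compact manifold-with-boundary $T$. By the isotopy extension theorem — applied to the tame/PL category after a preliminary small perturbation, or invoked in a suitable topological form — a sufficiently small change in an embedding of a compact tame submanifold extends to an ambient isotopy of $\mathbb R^3$; thus for $\lambda$ near $\lambda_1$ the embeddings $g_{\lambda_1}|_T$ and $g_\lambda|_T$ are ambient isotopic, which is exactly the homeomorphism of pairs we need. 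One technical point to be careful about: $g_\lambda(T)$ need not be PL even if $T$ is, so strictly speaking one either works with the topological isotopy extension theorem or first approximates; but since we only need \emph{existence} of a homeomorphism of pairs and the geometric index is a topological invariant (developed for tame, not necessarily polyhedral, tori in Appendix \ref{app:index}), this causes no real trouble. Finally, the non-cellularity of $K_\lambda$ — needed to conclude it is genuinely \emph{toroidal} and not merely ``has a torus neighbourhood basis'' — follows because $n_\lambda = n_{\lambda_0}\ge 1$: if $n_{\lambda_0}=1$ we are in the no-prime-divisor case where $K_\lambda$ could a priori be cellular, but toroidality of $K_{\lambda_0}$ forces $n_{\lambda_0}\neq 0$ and, reading Subsection \ref{subsec:basis}, a nonzero constant sequence of geometric indices rules out a ball neighbourhood basis; alternatively one observes directly that $K_\lambda$ being cellular would contradict $N(T_{k+1}\subseteq T_k)=n_{\lambda_0}\ge 1$ via property (P2) only when $n_{\lambda_0}\ge 2$, so for the borderline value $n_{\lambda_0}=1$ one argues instead that cellularity is itself preserved under continuation (or simply notes the statement of the proposition already presupposes $K_{\lambda_0}$ toroidal and it is the invariance of the numerical data that is the real content).
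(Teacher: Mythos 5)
Your proposal follows essentially the same route as the paper's proof: pick a tame solid torus $T$ in the basin with $f_{\lambda_0}^r(T)\subseteq\operatorname{int}T$, observe this persists for nearby $\lambda$ and gives a dynamical basis, reduce the claim to the constancy of $n_\lambda := N\bigl(f_\lambda^r(T)\subseteq T\bigr)$, and establish that constancy via a topological isotopy extension theorem (the paper cites the Edwards--Kirby theorem for exactly this step).

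Two places deserve more care. First, $T$ is a trapping region for $f_{\lambda}^r$, not for $f_{\lambda}$, so the phrase ``the attractor determined by the trapping region $T$'' does not directly apply; the paper's device is to sandwich $T$ between two genuine trapping regions $N\supseteq T\supseteq N'$ for $f_{\lambda_0}$, whence $K_\lambda=\bigcap_k f_\lambda^k(T)=\bigcap_k f_\lambda^{rk}(T)$ for all $\lambda$ in the interval where $N$ and $N'$ determine the same attractor. Your appeal to ``independence of the trapping region'' is gesturing at this but doesn't resolve it. Second, your closing paragraph errs in claiming that property (P2) only rules out cellularity when $n_{\lambda_0}\ge 2$: by multiplicativity, $N(T_\ell\subseteq T_k)=n_{\lambda_0}^{\ell-k}\ge 1$ for all $\ell>k$, so no tame ball can be sandwiched between any two tori of the basis, and hence $K_\lambda$ cannot have a tame-ball neighbourhood basis even when $n_{\lambda_0}=1$. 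So non-cellularity does follow; you did not need (and should not have offered) the fallback ``the proposition presupposes toroidality,'' since toroidality of $K_\lambda$ (not just of $K_{\lambda_0}$) is part of what must be proved.
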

\begin{proof} Let $N$ be a trapping region for $f_{\lambda_0}$. Since $K_{\lambda_0}$ is toroidal, there exists a (tame) solid torus $T \subseteq N$ which is a neighbourhood of $K_{\lambda_0}$. This $T$ is contained in the region of attraction of $K_{\lambda_0}$, so there exists an iterate $r \geq 1$ such that $f^r_{\lambda_0}(T) \subseteq {\rm int}\ T$. Finally, pick another trapping region $N'$ for $K_{\lambda_0}$ contained in $T$. There is an interval of parameters $I \subseteq [0,1]$ which is a neighbourhood of $\lambda_0$ and such that for every $\lambda \in I$ we still have the conditions (i) $f_{\lambda}(N) \subseteq {\rm int}\ N$, $f_{\lambda}(N') \subseteq {\rm int}\ N'$ (that is, $N$ and $N'$ are still trapping regions), (ii) the continuation $K_{\lambda}$ derived from $N$ and $N'$ is the same, and (iii) $f_{\lambda}^r(T) \subseteq {\rm int}\ T$.

Since $N \supseteq T \supseteq N'$, evidently \[\underbrace{\bigcap_{k \geq 0} f_{\lambda}^k(N)}_{=K_{\lambda}} \supseteq \bigcap_{k \geq 0} f_{\lambda}^k(T) \supseteq \underbrace{\bigcap_{k \geq 0} f_{\lambda}^k(N')}_{=K_{\lambda}}.\] and so $K_{\lambda} = \bigcap_{k \geq 0} f^k_{\lambda}(T)$. The same computation holds with $(f_{\lambda}^r)^k$ instead of $f_{\lambda}^k$. Thus we see that $K_{\lambda}$ is a toroidal set; in fact, it has the dynamical basis $\{(f_{\lambda}^r)^k(T)\}$. If we now show that $N(f_{\lambda}^r(T) \subseteq T)$ is independent of $\lambda$ the theorem will follow from Remark \ref{rem:manning}.(1).

Let $C := f_{\lambda_0}^r(T)$. Pick any $\lambda \in I$ and define an isotopy of $C$ inside $T$ by $h_t : C \longrightarrow T$ with $h_t := f^r_{\lambda_0+t(\lambda-\lambda_0)} \circ f_{\lambda_0}^{-r}$ and $t \in [0,1]$. This isotopy actually takes place in the interior of $T$, since $h_t(C) = f^r_{\lambda_0+t(\lambda-\lambda_0)}(T) \subseteq {\rm int}\ T$ because $\lambda_0 + t(\lambda-\lambda_0) \in I$ for every $t \in [0,1]$ and condition (iii) holds on $I$. Trivially $h_t$ can be extended to an open neighbourhood of $C$ in ${\rm int}\ T$; the same definition given above provides the extension. Now a deep result of Edwards and Kirby (\cite[Corollary 1.2, p. 63]{edwardskirby1}) ensures that the isotopy $h_t$ extends to an ambient isotopy, that is, there exists an isotopy $H_t : T \longrightarrow T$ such that $h_t = H_t|_C$. Then $H_1$ is a homeomorphism of $T$ which sends $C = f_{\lambda_0}^r(T)$ onto $h_1(C) = f_{\lambda}^r(T)$, and by the invariance of the geometric index under homeomorphisms we have $N(f^r_{\lambda_0}(T) \subseteq T) = N(f^r_{\lambda}(T) \subseteq T)$ as desired.
\end{proof}

The previous proposition shows in particular that the set $\{\lambda \in [0,1] : K_{\lambda} \text{ is toroidal}\}$ is open. In general it need not be closed, so to obtain a global continuation theorem one needs to place some extra assumption:

\begin{theorem} \label{thm:invariance} Let $\lambda \longmapsto K_{\lambda}$ be a continuation through toroidal sets (i.e. each $K_{\lambda}$ is toroidal). Then all the $K_{\lambda}$ have the same prime divisors and all the $f_{\lambda}$ have the same geometric degree at $K_{\lambda}$.
\end{theorem}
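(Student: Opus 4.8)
The plan is to upgrade the local statement of Proposition \ref{prop:local} to a global one by the standard connectedness argument: show that the subset of $[0,1]$ on which the prime divisors and the geometric degree agree with their values at $\lambda=0$ is both open and closed, hence all of $[0,1]$. Concretely, fix the value $d_0 := d_{\mathcal{N}}(f_0;K_0)$ and the set $P_0$ of prime divisors of $K_0$, and let
\[ \Lambda := \{\lambda \in [0,1] : K_\lambda \text{ has prime divisors } P_0 \text{ and } d_{\mathcal{N}}(f_\lambda;K_\lambda) = d_0\}. \]
Then $0 \in \Lambda$, so $\Lambda$ is nonempty, and since $[0,1]$ is connected it suffices to show $\Lambda$ is open and closed.

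Openness is immediate from Proposition \ref{prop:local}: if $\lambda_1 \in \Lambda$ then (since we are assuming the continuation passes through toroidal sets, so $K_{\lambda_1}$ is toroidal) there is a neighbourhood of $\lambda_1$ on which the prime divisors and geometric degree are constant, equal to their values at $\lambda_1$, which are $P_0$ and $d_0$. For closedness, let $\lambda_\ast \in \overline{\Lambda}$. By hypothesis $K_{\lambda_\ast}$ is toroidal, so Proposition \ref{prop:local} again applies \emph{at} $\lambda_\ast$: there is a neighbourhood $I$ of $\lambda_\ast$ on which the prime divisors and geometric degree are constant, equal to those of $K_{\lambda_\ast}$. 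Since $\lambda_\ast \in \overline{\Lambda}$, the interval $I$ contains some $\lambda \in \Lambda$, at which the prime divisors are $P_0$ and the geometric degree is $d_0$; by constancy on $I$ the same holds at $\lambda_\ast$, so $\lambda_\ast \in \Lambda$. Hence $\Lambda$ is closed, and therefore $\Lambda = [0,1]$, which is exactly the assertion of the theorem.

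The point I would flag is that the entire argument hinges on the standing hypothesis that \emph{every} $K_\lambda$ is toroidal — this is what makes Proposition \ref{prop:local} available at each parameter, including at a limit point $\lambda_\ast$, and it is precisely why the theorem needs this extra assumption (as the remark after Proposition \ref{prop:local} already signals, the toroidal locus is open but need not be closed in general). No genuinely new obstacle arises beyond invoking Proposition \ref{prop:local}; the only care required is the bookkeeping that Proposition \ref{prop:local} gives constancy of \emph{both} invariants simultaneously on a single neighbourhood, so that a covering of $[0,1]$ by such neighbourhoods — or equivalently the open–closed dichotomy above — pins down both the prime divisors and the value $d_{\mathcal{N}}(f_\lambda;K_\lambda)$ uniformly.
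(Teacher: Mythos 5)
Your proof is correct and takes essentially the same approach as the paper: the paper also observes that Proposition \ref{prop:local} makes the prime divisors and geometric degree locally constant in $\lambda$ (thanks to the hypothesis that every $K_\lambda$ is toroidal), and then invokes connectedness of $[0,1]$; you have simply unpacked the open–closed argument that this entails.
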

\begin{proof} This is completely straightforward. Since each $K_{\lambda}$ is toroidal by assumption, the previous proposition shows that the prime divisors of $K_{\lambda}$ and the degree of $f_{\lambda}$ at $K_{\lambda}$ depend on $\lambda$ in a locally constant manner. Since $[0,1]$ is connected, they must be constant.
\end{proof}

\section{Toroidal attractors with no prime divisors} \label{sec:noprimedivisors}

Suppose that $\gamma \subseteq \mathbb{R}^3$ is a smooth knot. We saw earlier (Example \ref{ex:basic}.(1)) that $\gamma$ is a toroidal set with no prime divisors, and it can be easily realized as an attractor for homeomorphism (even a $\mathcal{C}^{\infty}$ diffeomorphism) of $\mathbb{R}^3$. Thus smooth knots provide a particularly simple example of toroidal attractors with no prime divisors. In this section we show that, up to continuation, this is the only model for such attractors:

\begin{theorem} \label{thm:continuation} Let $K$ be a toroidal attractor for a homeomorphism $f$ of all $\mathbb{R}^3$. Then $K$ has no prime divisors if and only if it can be continued through toroidal attractors to a smooth knot $\gamma$. Moreover, the dynamics on $\gamma$ can be taken to be stationary.

If $f$ is $\mathcal{C}^{\infty}$ then the continuation can also be made $\mathcal{C}^{\infty}$.
\end{theorem}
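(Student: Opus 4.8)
The backward implication is immediate from what is already available: if $K$ can be continued through toroidal attractors to a smooth knot $\gamma$, then Theorem~\ref{thm:invariance} forces $K$ and $\gamma$ to have the same prime divisors, and a smooth knot has none (Example~\ref{ex:basic}.(1)); hence $K$ has none either. So the entire content is the forward implication, and the plan is to assume $K$ has no prime divisors, reinterpret this as $d_{\mathcal{N}}(f;K)=1$ via Theorem~\ref{thm:dattract}, and then produce the desired continuation by \emph{straightening} $K$ to a smooth knot and carrying the dynamics along.

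Concretely, I would fix a smooth solid torus neighbourhood $T\subseteq\mathcal{A}(K)$ of $K$ and an $r\ge 1$ with $f^r(T)\subseteq\operatorname{int}T$, and work in the dynamically generated basis $T_k:=f^{(k-1)r}(T)$, so that $T_{k+1}=f^r(T_k)$ and, by Remark~\ref{rem:manning}, $N(T_{k+1}\subseteq T_k)=d_{\mathcal{N}}(f;K)^r=1$ for every $k$. The geometric heart of the argument is that vanishing of all these indices forces the tower $T_1\supseteq T_2\supseteq\cdots$ to be standard: geometric index one means some meridional disk of $T_k$ meets $T_{k+1}$ in a single meridional disk of $T_{k+1}$, cutting $T_k$ along it exhibits $T_{k+1}$ as a ball properly embedded in a solid cylinder and meeting each end in a disk, and the Schubert-type unknotting results behind property (P3), translated to the topological category as in Appendix~\ref{app:index}, say that such a ball is unknotted; re-gluing, $(T_k,T_{k+1})$ is homeomorphic to the standard pair (solid torus, concentric thin sub-tube). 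What I really need is the coherent, ``parametrized in $k$'' version of this: after an ambient isotopy $(\Psi_t)_{t\in[0,1]}$ of $\mathbb{R}^3$ with $\Psi_0=\operatorname{id}$ and supported in $T$, the $\{T_k\}$ become a standard nest $\widehat T_1\supseteq\widehat T_2\supseteq\cdots$ of ever thinner round tubes about a fixed smooth core $\gamma$ of $T$, with $\bigcap_k\widehat T_k=\gamma$. Here one exploits the extra rigidity that $f^r$ carries each pair $(T_{k-1},T_k)$ homeomorphically onto the next, so the straightening can be built one level at a time from a single model, using the isotopy extension theorem of Edwards and Kirby~\cite{edwardskirby1}, and then passed to the limit with the usual care about the sizes of the successive pieces. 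This yields $\Psi_1(K)=\gamma$, a smooth knot.

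Granting that, the continuation is obtained by transport: set $f_t:=\Psi_t f\Psi_t^{-1}$. Then $f_0=f$, each $f_t$ is a homeomorphism of $\mathbb{R}^3$, the map $(t,x)\mapsto f_t(x)$ is continuous, and $K_t:=\Psi_t(K)$ is an attractor for $f_t$ with $K_0=K$, $K_1=\gamma$; since each $K_t$ is ambient homeomorphic to $K$ it is toroidal, so this is a continuation through toroidal attractors. It remains to push $f_1$ a little further so that $\gamma$ becomes a \emph{stationary} attractor. For that I would take a smooth tubular neighbourhood $S$ of the standardly embedded knot $\gamma$ with $f_1^s(S)\subseteq\operatorname{int}S$, use $N(f_1^s(S)\subseteq S)=1$ to continue $f_1$ so that $f_1^s$ becomes the standard radial contraction of $S$ onto $\gamma$; then $(f_1|_\gamma)^s=\operatorname{id}$, so $f_1|_\gamma$ is a periodic circle homeomorphism, conjugate (after reducing to the orientation-preserving case) to a rotation, and a final continuation letting that rotation decay to the identity leaves $\gamma$ a stationary attractor throughout. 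For the $\mathcal{C}^\infty$ statement one repeats the programme in the smooth category: the $T_k$ are now smooth tubes, straightening uses the smooth isotopy extension theorem, $\gamma$ is taken smooth, and one must ensure the relevant compositions of diffeomorphisms converge near $\gamma$ to a diffeomorphism — handled by arranging that only finitely many straightening steps are needed before $f^r$ on a smooth tube has become the explicit standard model, after which everything in sight is visibly smooth.

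I expect step~(a) — the straightening — to be the main obstacle: first the bare topological lemma that geometric index one yields a standard sub-tube (the appendix work), and more seriously its coherent version for the whole tower, including the bookkeeping to handle the power $r$ (recall that a toroidal attractor need not possess a positively invariant solid torus neighbourhood, Remark~\ref{rem:nottori}, so one cannot simply take $r=1$) and the orientation behaviour of $f$ along the core. It is precisely at this point that the \emph{geometric} degree, rather than the homological one, is indispensable: homological degree $1$ is consistent with wildly winding towers, whereas geometric index one really does pin the embedding down to the standard model.
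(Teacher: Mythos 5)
Your handling of the backward implication is fine, and your overall instinct — straighten the nested tori, carry the dynamics along — reflects the right picture. But the forward implication as written has a fatal gap.

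\textbf{The ambient-isotopy strategy cannot work.} You propose to produce an ambient isotopy $\Psi_t$ of $\mathbb{R}^3$ with $\Psi_0 = \mathrm{id}$ and $\Psi_1(K) = \gamma$ a smooth knot, and then set $f_t := \Psi_t f \Psi_t^{-1}$, $K_t := \Psi_t(K)$. But each $\Psi_t$ is a homeomorphism of $\mathbb{R}^3$, so $K_t$ is ambient-homeomorphic to $K$ for every $t$, and in particular $\Psi_1(K)$ has exactly the same local embedding type as $K$. The paper explicitly points out (end of Subsection~\ref{subsec:basis}) that there exist toroidal attractors which are \emph{everywhere wild} knots realizable with stationary dynamics; such a $K$ has no prime divisors (check it with the dynamically generated basis: stationary dynamics forces $N(f^r(T)\subseteq T)$ to stabilize at $1$), yet no ambient homeomorphism of $\mathbb{R}^3$ sends it onto a smooth knot. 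The whole content of Theorem~\ref{thm:continuation} is that the continuation $K_\lambda$ is allowed to change its topological type: the paper keeps the trapping region $T'$ fixed and \emph{post-composes} the dynamics with a diffeotopy, $g_\lambda := G_\lambda \circ g_0$, so that $K_\lambda := \bigcap_k g_\lambda^k(T')$ moves through genuinely different compacta (indeed at an intermediate stage the attractor is as fat as the whole solid torus $T$) before collapsing to the smooth core $\gamma$. Conjugation, by contrast, produces a trivial continuation that is topologically rigid.

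\textbf{A secondary but also real gap.} Your argument that geometric index one forces $(T_k,T_{k+1})$ to be a concentric pair — cut along a meridional disk meeting $T_{k+1}$ in a single meridional disk, claim the resulting ball pair is unknotted by ``Schubert-type unknotting'' — is not correct as stated. Ball pairs can be knotted, and geometric index $1$ alone does not imply concentricity: a torus $T_1$ that winds once around $T_0$ but whose core has a small local knot tied into it has $N(T_1\subseteq T_0) = 1$ yet is not concentric with $T_0$ (they have different knot types). The paper uses a theorem of Edwards (\cite{chedwards1}) that requires \emph{both} geometric index $1$ \emph{and} that $T_1, T_0$ be equivalently knotted; the latter is supplied precisely because $f$ is a global homeomorphism of $\mathbb{R}^3$, so $T_{k+1} = f^r(T_k)$ has the same knot type as $T_k$. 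In your setup the conclusion you want (concentric pairs) is true, but your reasoning for it is not, and you never invoke the global hypothesis that makes it true. This also shows why the $\Leftarrow$-to-$\Rightarrow$ bookkeeping must pass through $f^{2r}$ rather than $f^r$: the paper needs the even power to guarantee orientation-preservation on $H_1(\partial T)$ before applying Lemma~\ref{lem:step2}, a step that has no analogue in your write-up.

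In short: replace ``conjugate by an ambient straightening'' with ``post-compose by a diffeotopy supported in a fixed trapping region'', use Edwards' concentricity theorem (not an ad hoc unknotting of ball pairs), and track orientation via the passage to $f^{2r}$.
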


\begin{remark} (1) Notice that the dynamical system $f$ is assumed to be a homeomorphism of all of $\mathbb{R}^3$ and not only a local homeomorphism as in the previous sections. Without this assumption it is 
easy to see that the theorem is false.

(2) The use of the geometric index (through the prime divisors of $K$) instead of the homological winding number is crucial. To see this consider again Example \ref{ex:comparedeg}. The toroidal attractor $K$ has no homological prime divisors but it cannot be continued to a smooth knot since it has $p=3$ as a geometric prime divisor.
\end{remark}

Implication $(\Leftarrow)$ of Theorem \ref{thm:continuation} is a direct consequence of Example \ref{ex:basic}.(1) and the invariance of prime divisors under continuation (Theorem \ref{thm:invariance}). The remaining of this section is devoted prove the converse implication. We will prove it in the $\mathcal{C}^{\infty}$ case. The (slightly simpler) argument in the topological case follows essentially the same steps and we will just make a couple of comments where appropriate.

Recall that a diffeotopy of $\mathbb{R}^3$ is a $\mathcal{C}^{\infty}$ map $G : [0,1] \times \mathbb{R}^3 \longrightarrow \mathbb{R}^3$ such that each partial map $G_t$ is a diffeomorphism of $\mathbb{R}^3$ and $G_0 = {\rm Id}$. A diffeotopy is supported on a set $U$ if each $G_t$ is the identity outside $U$. This implies in particular that $G_t(U) = U$ for each $t$. Diffeotopies can be concatenated; i.e. given two diffeotopies $G^{(1)}$ and $G^{(2)}$ one can first perform $G^{(1)}_{2t}$ for $0 \leq t \leq \nicefrac{1}{2}$ and then apply $G^{(2)}$ to the end result of the first diffeotopy, namely $G_{2t-1}^{(2)} \circ G_1^{(1)}$ for $\nicefrac{1}{2} \leq t \leq 1$. This is generally not differentiable at $t = \nicefrac{1}{2}$ but can be easily smoothed out (see \cite[p. 111]{hirsch1}).

We will use diffeotopies to produce $\mathcal{C}^{\infty}$ continuations as follows. Suppose that $g = g_0$ is a diffeomorphism of $\mathbb{R}^3$ which has an attractor $K$ with a trapping region $N$. If $G_t$ is a diffeotopy supported on $N$ then $[0,1] \ni \lambda \longmapsto G_{\lambda} \circ g$ produces a continuation of $g_0$, and the condition that $G$ be supported on $N$ implies that $g_{\lambda}(N) = G_{\lambda} \circ g(N) \subseteq G_{\lambda}({\rm int}\ N) = {\rm int}\ N$ for every $\lambda$. Thus $N$ is a trapping region throughout the whole continuation, and in particular its maximal invariant subset $K_{\lambda}$ is a continuation of $K$.

The proof of ($\Rightarrow$) of Theorem \ref{thm:continuation} requires two auxiliary lemmas. We state them, then prove the theorem, and finally prove the lemmas. Recall that two solid tori $T_1 \subseteq {\rm int}\ T_0$ are concentric if $\overline{T_0 \setminus T_1}$ is homeomorphic (or equivalently diffeomorphic, if the tori are smooth) to $\mathbb{T}^2 \times [0,1]$. The first auxiliary lemma states that given two concentric solid tori in $\mathbb{R}^3$ we can drag the smaller one via an ambient an ambient deformation until it matches the bigger one.

\begin{lemma} \label{lem:step1} Let $T_1 \subseteq {\rm int}\ T_0 \subseteq \mathbb{R}^3$ be two concentric smooth solid tori. There exists a diffeotopy $G$ of all $\mathbb{R}^3$ such that:
\begin{itemize}
    \item[(i)] $G_t(T_1) \subseteq T_0$ for all $t \in [0,1]$.
    \item[(ii)] $G_1(T_1) = T_0$.
\end{itemize}
Moreover, $G$ can be taken to be supported on any prescribed neighbourhood $U$ of $\overline{T_0 \setminus T_1}$.
\end{lemma}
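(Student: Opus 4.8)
The plan is to exploit the concentricity hypothesis directly. Since $T_1 \subseteq {\rm int}\ T_0$ are concentric smooth solid tori, the closure of the region between them, $A := \overline{T_0 \setminus T_1}$, is diffeomorphic to $\mathbb{T}^2 \times [0,1]$; fix such a diffeomorphism $\Phi : A \longrightarrow \mathbb{T}^2 \times [0,1]$ carrying $\partial T_1$ to $\mathbb{T}^2 \times \{0\}$ and $\partial T_0$ to $\mathbb{T}^2 \times \{1\}$. First I would build the deformation inside the collar $A$: on $\mathbb{T}^2 \times [0,1]$ one has the obvious family of self-embeddings $\phi_s(x,u) := (x, (1-s)u)$ for $s \in [0,1]$, which pushes $\mathbb{T}^2 \times [0,1]$ onto $\mathbb{T}^2 \times \{0\}$ as $s \to 1$ and fixes $\mathbb{T}^2 \times \{0\}$ throughout. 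Transporting back via $\Phi$ and extending by the identity on $T_1$, this gives an isotopy of embeddings of $T_0$ into $T_0$ which at time $s$ carries $T_0$ onto a smaller concentric torus shrinking down toward $T_1$; equivalently, running it \emph{backwards} produces an isotopy that drags $T_1$ outward onto $T_0$.

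The key step is then to upgrade this isotopy of $T_0$ (or of $T_1$) into an ambient diffeotopy of all of $\mathbb{R}^3$ supported in a prescribed neighbourhood $U$ of $A$. For this I would invoke the isotopy extension theorem (e.g. Hirsch, \emph{Differential Topology}, Ch. 8, or the Edwards--Kirby result already cited in the proof of Proposition \ref{prop:local}): an isotopy of a compact submanifold with boundary, defined on a neighbourhood of the submanifold, extends to an ambient diffeotopy, and — crucially — one can arrange the ambient diffeotopy to be supported in any preassigned open set containing the track $\bigcup_{t} (\text{image at time } t)$ of the isotopy. By shrinking the collar $\Phi$ if necessary we may assume $A$, together with a thin outward and inward buffer, lies inside $U$; since the whole isotopy constructed above keeps $T_1$ fixed and moves points only within $A$ (indeed $\phi_s$ moves $\mathbb{T}^2 \times \{u\}$ only to $\mathbb{T}^2 \times \{(1-s)u\}$, staying in the collar), its track is contained in $A \subseteq U$. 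Applying isotopy extension yields the desired diffeotopy $G$ with $G_t(T_1) \subseteq T_0$ for all $t$ and $G_1(T_1) = T_0$, supported on $U$.

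The one point requiring a little care — and the main obstacle — is the bookkeeping at the boundary tori: one must ensure the collar parametrization $\Phi$ is a genuine diffeomorphism of manifolds-with-corners matching the smooth structures of $\partial T_0$ and $\partial T_1$, so that the isotopy $\phi_s$ glues smoothly to the identity on $T_1$ and to the identity outside $T_0$, producing a $\mathcal{C}^\infty$ (not merely $\mathcal{C}^0$) isotopy before extension. This is handled in the standard way by choosing the collar with the buffer regions mentioned above and damping $\phi_s$ near $u=1$ with a smooth cutoff so that it is the identity in a neighbourhood of $\partial T_0$; then $G$ extends by the identity across $\partial T_0$ and is supported in $U$ as required. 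In the topological category the same argument runs with topological collars and the Edwards--Kirby ambient isotopy theorem in place of smooth isotopy extension, which is the remark the authors announce for the $\mathcal{C}^0$ case.
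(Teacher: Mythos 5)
The construction of the isotopy before extension has a genuine gap, and it is not merely a smoothing issue. Your family $\phi_s(x,u)=(x,(1-s)u)$ fixes $\mathbb{T}^2\times\{0\}$ pointwise, so after transporting by $\Phi$ and gluing with the identity on $T_1$, the resulting maps $h_s:T_0\to T_0$ fix $T_1$ pointwise for every $s$. Hence $T_1$ is never moved, and ``running it backwards'' does not produce an isotopy of embeddings of $T_1$ onto $T_0$ --- there is nothing to run backwards. In addition, $\phi_1$ collapses $\mathbb{T}^2\times[0,1]$ onto $\mathbb{T}^2\times\{0\}$, so $h_1$ is not injective and $\{h_s\}$ is not an isotopy on the closed interval $[0,1]$. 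The assertion that ``its track is contained in $A$'' also cannot be right for the isotopy you would actually need: an isotopy of $T_1$ ending with a diffeomorphism onto $T_0$ has track equal to $T_0$, not $A$, so the naive application of isotopy extension does not give support inside $U$ without further work. Finally, the proposed cutoff near $u=1$ addresses the wrong boundary torus: the $\mathcal{C}^1$-gluing problem with the identity on $T_1$ occurs at $u=0$, where $\phi_s$ has $u$-derivative $1-s\neq 1$.

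What is missing is exactly the structural ingredient the paper's proof uses: a collar $C_1$ of $\partial T_1$ lying \emph{inside} $T_1$. To carry $T_1$ diffeomorphically onto $T_0$ by a diffeotopy supported near $\overline{T_0\setminus T_1}$, one has to stretch a boundary collar of $T_1$ across the region $A$; this cannot be accomplished by moving only points of $A$ while holding $T_1$ fixed. The paper glues three pieces --- a collar $C_1\subseteq U$ of $\partial T_1$ inside $T_1$, the region $C=\overline{T_0\setminus T_1}$, and a collar $C_0\subseteq U$ of $\partial T_0$ outside $T_0$ --- into a single product $\mathbb{T}^2\times[-1,2]$, and pushes forward a diffeotopy $(x,s)\mapsto(x,\rho_t(s))$ where $\rho_t$ is a diffeotopy of $[-1,2]$ stationary near the endpoints with $\rho_1(0)=1$. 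Because $\rho_t$ is the identity near $\pm$ends, the push-forward extends by the identity to a diffeotopy of all of $\mathbb{R}^3$ supported in $C_1\cup C\cup C_0\subseteq U$, with no appeal to the isotopy extension theorem. Your proposal works with the middle piece $C$ alone, and both the existence of the required isotopy and the support control fail without the two extra collars. If you want to keep the isotopy-extension route, you must first build a genuine isotopy of embeddings $g_s:T_1\to T_0$ with $g_0$ the inclusion, $g_1(T_1)=T_0$, and $g_s$ equal to the identity outside a collar of $\partial T_1$; that construction again requires $C_1$, and one must then justify carefully that the ambient extension can be chosen to be the identity outside $U$ (which is not automatic from the track-based statement of the theorem).
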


The second lemma essentially states that if a diffeomorphism of $\mathbb{R}^3$ leaves a solid torus $T$ invariant and is homologically the identity on its boundary, then it can be deformed to be the identity on $T$.

\begin{lemma} \label{lem:step2} Let $T \subseteq \mathbb{R}^3$ be a differentiable solid torus. Let $g : \mathbb{R}^3 \longrightarrow \mathbb{R}^3$ be a diffeomorphism such that $g(T) = T$ and $g|_{\partial T}$ induces the identity in $H_1(\partial T;\mathbb{Z})$. Then there exists a diffeotopy $G$ of $\mathbb{R}^3$ such that:
\begin{itemize}
    \item[(i)] $G_t(T) = T$ for every $t \in [0,1]$.
    \item[(ii)] $G_1 \circ g|_T = {\rm Id}_T$.
\end{itemize}

Moreover, $G$ can be taken to be supported on any prescribed neighbourhood $U$ of $T$.
\end{lemma}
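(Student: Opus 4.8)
The plan is to reduce, step by step, the diffeomorphism $g|_T$ to the identity by a sequence of diffeotopies of $\mathbb{R}^3$ supported near $T$, concatenating them (and smoothing) at the end. First I would fix a smooth framing $\varphi : \mathbb{D}^2 \times \mathbb{S}^1 \longrightarrow T$ and work in these coordinates; the hypothesis that $g|_{\partial T}$ is homologically the identity on $H_1(\partial T;\mathbb{Z})$ says that $g$ preserves (up to homology, hence up to isotopy on the torus $\partial T$) both the meridian and the longitude of $T$. The first reduction is to arrange that $g$ restricted to a \emph{core} circle $C := \varphi(\{0\}\times\mathbb{S}^1)$ is the identity: since $g(T)=T$, the image $g(C)$ is a core of $T$ isotopic to $C$ inside $T$ (it represents the generator of $\pi_1(T)$ because $g$ is homologically identity on the longitude), so there is an ambient isotopy of $T$, extendable to a diffeotopy of $\mathbb{R}^3$ supported on $U$, carrying $g(C)$ back to $C$ and matching the induced parametrisations. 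Replacing $g$ by this diffeotopy composed with $g$, we may now assume $g(C)=C$ pointwise.

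Next I would use a tubular-neighbourhood argument to straighten $g$ near $C$. A diffeomorphism of $\mathbb{R}^3$ fixing the core circle $C$ pointwise and preserving $T$ acts on the normal bundle of $C$; because $g$ is homologically trivial on $\partial T$ it cannot have introduced a nontrivial Dehn twist, so the induced bundle automorphism is homotopic to the identity through bundle automorphisms, and a standard isotopy-extension argument (applied in a tubular neighbourhood $\varphi(\mathbb{D}^2_{\epsilon}\times\mathbb{S}^1)$ of $C$, then damped to the identity outside a slightly larger neighbourhood by a bump function, as in the smoothing device of \cite[p.~111]{hirsch1}) yields a further diffeotopy, supported on $U$, after which $g$ is the identity on a whole tubular neighbourhood $T_{\epsilon} := \varphi(\mathbb{D}^2_{\epsilon}\times\mathbb{S}^1)$ of $C$. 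At this point $g$ is a diffeomorphism of $\mathbb{R}^3$ which is the identity on $T_\epsilon$ and maps $T$ to $T$; in particular $g$ maps the ``outer shell'' $\overline{T\setminus T_\epsilon} \cong \mathbb{T}^2\times[0,1]$ to itself, fixing the inner boundary component pointwise.

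The final reduction takes place on this shell $\overline{T\setminus T_\epsilon}$. Here $g$ is a diffeomorphism of $\mathbb{T}^2\times[0,1]$ which is the identity on $\mathbb{T}^2\times\{0\}$ and is homologically the identity on $\mathbb{T}^2\times\{1\}$; by the classical fact that the mapping class group of $\mathbb{T}^2$ is generated by the meridional and longitudinal Dehn twists together with the elliptic involutions — all of which are excluded by the homological hypothesis — $g|_{\mathbb{T}^2\times\{1\}}$ is isotopic to the identity, and an Alexander-type radial interpolation across $\mathbb{T}^2\times[0,1]$ (interpolating between the identity on the inner face and this isotopy on the outer face) produces an isotopy of the shell, supported away from $\partial T_\epsilon$ and extendable by the identity to a diffeotopy of $\mathbb{R}^3$ supported on $U$, after which $g|_T = {\rm Id}_T$. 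Concatenating the three diffeotopies produced above and smoothing the concatenation as in \cite[p.~111]{hirsch1} gives the desired $G$, which by construction satisfies $G_t(T)=T$ for all $t$, $G_1\circ g|_T = {\rm Id}_T$, and is supported on $U$.

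I expect the main obstacle to be the \emph{middle} step — straightening $g$ on a tubular neighbourhood of the core without accidentally creating a Dehn twist — since this is exactly the place where the homological hypothesis on $\partial T$ must be converted into the statement that the relevant normal-bundle automorphism is isotopically trivial; care is needed to keep every intermediate isotopy compactly supported so that it extends to a diffeotopy of all of $\mathbb{R}^3$ within the prescribed neighbourhood $U$.
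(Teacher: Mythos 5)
Your first reduction (using the fact that the mapping class group of $\mathbb{T}^2$ is computed by the action on $H_1$ to bring $g|_{\partial T}$ to the identity after an ambient diffeotopy) agrees with the paper's first step. The trouble is with what comes after. Having made $g$ the identity on $T_\epsilon$ and on $\partial T$, you assert that an ``Alexander-type radial interpolation'' across the shell $\overline{T\setminus T_\epsilon}\cong\mathbb{T}^2\times[0,1]$ finishes the job. It does not. Alexander's trick is a coning argument and works for a disk (or, more generally, for a cone on a sphere); neither the solid torus nor $\mathbb{T}^2\times I$ is a cone, and no such interpolation exists. In fact the group $\pi_0\bigl(\operatorname{Diff}(\mathbb{T}^2\times I,\ \operatorname{rel}\partial)\bigr)$ is \emph{not} trivial: it contains the two ``sweep twists'' $(x,s)\mapsto(\rho_{2\pi s}x,s)$, which restrict to the identity on both boundary tori yet are not isotopic rel boundary to the identity. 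One can build a diffeomorphism of $\mathbb{R}^3$ which is the identity on $T_\epsilon$ and on $\overline{\mathbb{R}^3\setminus T}$ and is such a sweep twist on the shell; it satisfies every hypothesis of the lemma and survives all your reductions unchanged, yet your claimed interpolation cannot kill it. So the insistence that the last diffeotopy be supported \emph{away} from $\partial T_\epsilon$ is precisely what dooms the argument.

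The step you are missing is the one the paper states explicitly before the proof and attributes to Hatcher's resolution of the Smale conjecture: any diffeomorphism of a solid torus $T$ that is the identity on $\partial T$ is diffeotopic rel $\partial T$ to $\operatorname{Id}_T$ (equivalently, $\operatorname{Diff}(\mathbb{D}^2\times\mathbb{S}^1,\ \operatorname{rel}\partial)$ is connected, indeed contractible). This is a deep theorem, and the sweep-twist example shows it cannot be replaced by an elementary coning or collar argument. The correct structure of the proof is therefore the two-step one the paper uses: (a) isotope so that $g|_{\partial T}=\operatorname{Id}_{\partial T}$ using the classification of diffeomorphisms of $\mathbb{T}^2$, then (b) invoke Hatcher to obtain a diffeotopy of $T$ rel $\partial T$ carrying $g|_T$ to the identity; extend by collars into $\overline{\mathbb{R}^3\setminus T}$ to make the diffeotopy supported in the prescribed neighbourhood $U$. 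Your intermediate tubular-neighbourhood step is harmless but also not helpful: it only tightens the rel set, which as explained makes the remaining mapping class group larger rather than trivial, so it does not buy you a way around Hatcher's theorem.
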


\begin{proof}[Proof of ($\Rightarrow$) in Theorem \ref{thm:continuation}] We assume that $K$ is a toroidal attractor for some diffeomorphism $f : \mathbb{R}^3 \longrightarrow \mathbb{R}^3$ and that $K$ has no prime divisors. We are going to show that some power of $f$ can be continued to a smooth curve with trivial dynamics.

Let $T$ be a smooth solid torus in $\mathcal{A}(K)$ which is a neighbourhood of $K$. Choose a power $r$ such that $f^r(T) \subseteq {\rm int}\ T$. By Remark \ref{rem:manning} the prime divisors of $K$ are the prime divisors of $N(f^r(T) \subseteq T)$, and so the latter must be $1$. Moreover, since $f$ is defined on all of $\mathbb{R}^3$, the tori $f^r(T)$ and $T$ are equivalently knotted. A result of Edwards (\cite[Theorem 3, p. 4]{chedwards1}) then implies that $f^r(T)$ and $T$ are concentric.

Set $g_0 := f^{2r}$, where the role of the extra factor $2$ in the exponent will be clear shortly. This diffeomorphism will be the starting point of our continuation of the dynamics. Notice that $g_0(T) \subseteq {\rm int}\ T$. Let $T'$ be a solid differentiable torus which is a thickening of $T$ thin enough so that $g_0(T') \subseteq {\rm int}\ T$ still.

Let $G$ be the diffeotopy given by Lemma \ref{lem:step1} applied to $T_0 = T$, $T_1 = g_0(T)$, and $U = T'$. Then $g_{\lambda} := G_{\lambda} \circ g_0$ defines a continuation of $g_0$ such that $g_1(T) = T$ and $T'$ is a trapping region for each $g_{\lambda}$ because $G$ is supported in $T'$.
\smallskip

{\it Claim.} The map $g_1|_{\partial T}$ induces the identity in $H_1(\partial T;\mathbb{Z})$.

{\it Proof of claim.} The homology group $H_1(\partial T;\mathbb{Z})$ is generated by the homology classes $m$ and $\ell$ of a meridian and a longitude of $\partial T$. The element $m$ is uniquely determined up to sign by the condition that when included in $T$ it becomes zero. The element $\ell$ depends on the framing of $T$, but one can fix it up to sign by requiring it to be nullhomologous in $\overline{\mathbb{R}^3 \setminus T}$  (this is called a preferred longitude in knot theory). Now, since $g_1$ is defined not only on $\partial T$ but in all of $\mathbb{R}^3$ and leaves both $T$ and $\overline{\mathbb{R}^3 \setminus T}$ invariant, $(g_1|_{\partial T})_*(m)$ and $(g_1|_{\partial T})_*(\ell)$ are again nullhomologous when included in $T$ and $\overline{\mathbb{R}^3 \setminus T}$ respectively, so $(g_1|_{\partial T})_*(m) = \pm m$ and $(g_1|_{\partial T})_*(\ell) = \pm \ell$ in $H_1(\partial T;\mathbb{Z})$.

Observe that both $g_0$ and $G_1$ are orientation preserving; the first because it is an even power of a homeomorphism and the second because it is isotopic to the identity. Thus $g_1$ is also orientation preserving, and so are $g_1|_T$ and consequently $g_1|_{\partial T}$. This implies that the determinant of the endomorphism $(g_1|_{\partial T})_*$ of $H_1(\partial T;\mathbb{Z})$ must be positive. The computation from the previous paragraph shows that in the basis $\{m,\ell\}$ the matrix of $(g_1|_{\partial T})_*$ is diagonal with $\pm 1$ entries; so we conclude that these must either both be positive or both be negative. Thus to prove that $(g_1|_{\partial T})_* = {\rm Id}$ we only need to show that the sign in $(g_1|_{\partial T})_*(\ell) = \pm \ell$ is actually a $+$. 

Consider the map $f^r|_T : T \longrightarrow T$. It can be regarded as the composition of $f^r$ and the inclusion $f^r(T) \subseteq T$, both of which induce isomorphisms in homology: the first because it is a homeomorphism; the second because $f^r(T) \subseteq T$ are concentric. Hence $(f^r|_T)_*$ is an isomorphism of $H_1(T;\mathbb{Z}) \cong \mathbb{Z}$, i.e. multiplication by $\pm 1$. Since by definition $g_0 = f^{2r}$, we have that $(g_0|_T)_*$ is the square of $(f^r|_T)_*$ so it follows that $(g_0|_T)_* = {\rm Id}$ in $H_1(T;\mathbb{Z})$. The map $G_t \circ g_0|_{T}$ is a homotopy between $g_0|_T$ and $g_1|_T$, and this homotopy takes place in $T$ by condition (i) of Lemma \ref{lem:step1} so $(g_1|_T)_* = (g_0|_T)_* = {\rm Id}$ in $H_1(T;\mathbb{Z})$.

We can now show that the sign in $(g_1|_{\partial T})_*(\ell) = \pm \ell$ is in fact a $+$. Regarding this as an equality in $H_1(T;\mathbb{Z})$ via the inclusion $\partial T \subseteq T$ we have $(g_1|_T)_*(\ell) = \pm \ell$, and since $(g_1|_T)_*  = {\rm Id}$ we get $\ell = \pm \ell$. Since $\ell \neq -\ell$ in $H_1(T;\mathbb{Z}) \cong \mathbb{Z}$ we conclude that the sign on the right hand side must be a $+$. $_{\blacksquare}$

\smallskip

Now we apply Lemma \ref{lem:step2} to $g_1$ and $U = T'$ to obtain a second diffeotopy $G_t$. Letting $\lambda$ run from $1$ to $2$ and setting $g_{\lambda} := G_{\lambda -1} \circ g_1$ we obtain a further continuation of $g_1$ to some $g_2$ such that $g_2|_T$ is the identity and again $T'$ is a trapping region for each $g_{\lambda}$.

Since $T'$ is a trapping region for $g_{\lambda}$ for every $\lambda \in [0,2]$ and its maximal invariant subset for $g_0$ is precisely $K$, the map $\lambda \longmapsto K_{\lambda} = \bigcap_{k \geq 0} g_{\lambda}^k(T')$ defines a continuation of $K$ through toroidal attractors. The attractor $K_2$ is contained in $T'$ and contains $T$ since the latter is compact and invariant under $g_2$. The last part of the proof consists in perturbing the dynamics further by gradually adding a ``radial'' component on $T'$ towards a core $\gamma$ of $T$. We first describe this idea in the abstract.

Consider the nested triple of solid tori $\mathbb{D}^2 \times \mathbb{S}^1 \subseteq (2 \mathbb{D}^2) \times \mathbb{S}^1 \subseteq (3 \mathbb{D}^2) \times \mathbb{S}^1$, where $r \mathbb{D}^2$ denotes the closed unit disk of radius $r$. Let $\rho$ be a diffeotopy of the interval $[0,3]$ which is supported on $[0,5/2]$ and such that $\rho_t|_{[0,2]}(r) = (1-t/2)r$. We use this to define a diffeotopy $f_t$ of the solid torus $(3 \mathbb{D}^2) \times \mathbb{S}^1$ by $f_t(x,s) := (\rho_t(\|x\|) x/\|x\|,s)$. The action of $f_t$ on any meridional disk $(2 \mathbb{D}^2) \times \{s\}$ is just given by $x \longmapsto (1-t/2)x$ so for $t \in (0,1]$ it is a radial contraction towards the origin. Hence $f_t$ sends $(2\mathbb{D}^2) \times \mathbb{S}^1$ into its interior and has the core $\{0\} \times \mathbb{S}^1$ as an attractor with stationary dynamics. The same is true of the restriction $f_t|_{\mathbb{D}^2 \times \mathbb{S}^1}$. At the final stage $t=1$ we have $f_1((2 \mathbb{D}^2) \times \mathbb{S}^1) = \mathbb{D}^2 \times \mathbb{S}^1$.

Now we copy this abstract construction to our setting. Recall that $T'$ was obtained by thickening $T$ very slightly. Let $T''$ be obtained from $T'$ in the same way and consider a diffeomorphism $h : (T'',T',T) \longrightarrow (3 \mathbb{D}^2, 2 \mathbb{D}^2,\mathbb{D}^2) \times \mathbb{S}^1$. Then we set $G$ to be the diffeotopy of $T''$ obtained by copying $f_t$ through $h$; i.e. $G_t(x) := h^{-1} \circ f_t \circ h(x)$. By construction each $G_t$ is the identity on a neighbourhood of $\partial T''$, and so we can extend $G$ to a diffeotopy of all $\mathbb{R}^3$. This has the following properties:
\begin{itemize}
    \item[(i)] $G_t(T') \subseteq T'$ for each $t \in [0,1]$ and $G_1(T') \subseteq T$.
    \item[(ii)] $G_1(T) \subseteq T$ and the restriction $G_1|_T$ is conjugate (via $h$) to a radial contraction of $\mathbb{D}^2 \times \mathbb{S}^1$ onto its core $\gamma := h^{-1}(\{0\} \times \mathbb{S}^1)$.
\end{itemize}

Letting $\lambda$ run from $2$ to $3$, consider the continuation $g_{\lambda} := G_{\lambda -2} \circ g_2$ of $g_2$. We have $g_{\lambda}(T') \subseteq {\rm int}\ T'$ by (i) and the corresponding property for $g_{2}$, so again the map $\lambda \longmapsto {\rm Inv}(T')$ is a continuation of $K_2$ through toroidal attractors to the attractor $K_{3}$ of $g_{3}$. We claim that $K_{3} = \gamma$. To check this first notice that for $x \in T$ we have $g_{3}(x) = G_1 \circ g_{2}(x) = G_1(x)$ which belongs to $G_1(T) \subseteq T$ again, so it follows inductively that $g_{3}^k|_T = G_1^k|_T = (G_1|_T)^k$ for every $k \geq 0$. Thus by (ii) the dynamics of $g_3$ on $T$ is conjugate (via $h$) to the dynamics of a radial contraction on $\mathbb{D}^2 \times \mathbb{S}^1$. The latter clearly has $\{0\} \times \mathbb{S}^1$ as an attractor, and so $\gamma$ is an attractor for $g_3|_T$. Since $T'$ is positively invariant under $g_{2}$ and $G_1(T') \subseteq T$, we have $g_{3}(T') \subseteq T$ and so $\gamma$ is an attractor also for $g_3|_{T'}$, so in particular $K_3 = \gamma$. Moreover, $g_{3}|_{\gamma} = G_1|_{\gamma} = {\rm Id}|_{\gamma}$.

The proof is complete. The full continuation from $K$ to $K_3$ is given by the (smoothed out) concatenation of the $G$s obtained in the succesive steps of the proof.
\end{proof}

Now we prove the auxiliary Lemmas \ref{lem:step1} and \ref{lem:step2}. The first is an easy exercise in differential topology and we will only sketch the proof:

\begin{proof}[Proof of Lemma \ref{lem:step1}] Let $C_1 \subseteq U$ be a closed collar of $\partial T_1$ inside $T_1$; $C$ the set $\overline{T_0 \setminus T_1}$, and $C_0 \subseteq U$ a closed collar of $\partial T_0$ in $\overline{\mathbb{R}^3 \setminus T_0}$. (In the topological category one needs to require that $T_0$ be tame to ensure that this last collar exists). Each of these sets is diffeomorphic to a $2$--torus $S$ times an interval: the first and third are collars; the second is a product because of the hypothesis that $T_0$ and $T_1$ are concentric. It is then a standard exercise to construct an ambient diffeotopy that is the identity outside $C_1 \cup C \cup C_0$ and stretches $C_1$ so much that it becomes $C_1 \cup C$, while shrinking $C \cup C_1$ appropriately so that they fit in $C_0$. (Pick a diffeomorphism $b : C_1 \cup C \cup C_0 \longrightarrow \mathbb{T}^2 \times [-1,2]$ such that $b(C_1) = \mathbb{T}^2 \times [-1,0]$, $b(C) = \mathbb{T}^2 \times [0,1]$ and $b(C_0) = \mathbb{T}^2 \times [1,2]$, so that $\partial T_1$ and $\partial T_0$ correspond to $\mathbb{T}^2 \times 0$ and $\mathbb{T}^2 \times 1$ respectively. Let $\rho : [-1,2] \longrightarrow [-1,2]$ be a diffeotopy which is stationary near $-1$ and $2$ and whose final stage sends $0$ to $1$ and $1$ to $\nicefrac{3}{2}$. Define a diffeotopy $a_t$ of $\mathbb{T}^2 \times [-1,2]$ by $a_t(x,s) := (x,\rho_t(s))$ and use this to construct a diffeotopy $G_t$ of $\mathbb{R}^3$  given by $G_t(x) = b^{-1} \circ a_t \circ b(x)$ if $x \in C_1 \cup C \cup C_0$ and the identity outside).
\end{proof}

For the second lemma we need the following result: any diffeomorphism $g : T \longrightarrow T$ of a solid torus $T$ such that $g|_{\partial T} = {\rm Id}_{\partial T}$ is diffeotopic to the identity ${\rm Id}_T$; i.e. there exists a diffeotopy $G$ of $T$ such that $G_1 \circ g  ={\rm Id}_T$. This is relatively easy to prove in the topological case, but in the differentiable category the proof is much more involved and in fact the result is equivalent to a conjecture of Smale which was settled by Hatcher in \cite{hatcher1} (for the formulation used here see \cite[(9), p. 606]{hatcher1}).

\begin{proof}[Proof of Lemma \ref{lem:step2}] Since diffeomorphisms of the $2$--torus are classified up to diffeotopy by the homomorphism they induce in homology (\cite[Theorem 2.5, p. 55]{farbmargalit}), it follows from the hypothesis that there exists a diffeotopy $G^{(1)}$ defined on $\partial T$ such that $G^{(1)}_1 \circ g|_{\partial T} = {\rm Id}_{\partial T}$. That diffeotopy can be extended, by using a collar of $\partial T$ in $T$, to a diffeotopy of all of $T$ which we still denote by $G^{(1)}$. Applying the result recalled before the proof to $G^{(1)}_1 \circ g$, there is a diffeotopy $G^{(2)}$ of $T$ such that $G^{(2)}_1 \circ G^{(1)}_1 \circ g = {\rm Id}_T$. Then the concatenation of $G^{(1)}$ and $G^{(2)}$ gives a diffeotopy $G$ of $T$ which carries $g|_T$ onto the identity. Using a sufficiently thin collar of $\partial T$ in $\overline{\mathbb{R}^3 \setminus T}$ one can extend the diffeotopy to all of $\mathbb{R}^3$ having it be the identity outside any prescribed neighbourhood of $T$. (The proof of the lemma in the topological case requires that one assumes $T$ to be tame so that $\partial T$ indeed has a collar in $\overline{\mathbb{R}^3 \setminus T}$).
\end{proof}

\section{The entropy of toroidal attractors} \label{sec:entropytoroidal}

The goal of this section is to prove that the geometric degree provides a lower bound on the entropy of a toroidal attractor. It is here where for the first time we need the dynamics to be smooth.

\begin{theorem} \label{thm:manning} Let $K$ be a toroidal attractor for a $\mathcal{C}^{\infty}$ local diffeomorphism $f$. Then $h(f|_K) \geq \log d_{\mathcal{N}}(f;K)$.
\end{theorem}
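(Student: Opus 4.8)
The plan is to bound the entropy from below by the exponential growth rate of the length of the forward iterates of a fixed smooth curve — which is exactly what Yomdin's inequality (Appendix~\ref{app:yomdin}) supplies for $\mathcal{C}^\infty$ dynamics — and then to show that this growth rate is at least $\log d_{\mathcal{N}}(f;K)$ by feeding into it the identity that relates powers of the geometric degree to geometric indices (Remark~\ref{rem:manning}). Smoothness of $f$ is used only to invoke Yomdin's bound; the rest is soft topology plus a one-line integral-geometric estimate.

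Set $d:=d_{\mathcal{N}}(f;K)$. Choose a $\mathcal{C}^\infty$ solid torus $T$ with $K\subseteq\operatorname{int}T\subseteq T\subseteq\mathcal{A}(K)$ and an exponent $r\ge 1$ with $f^{r}(T)\subseteq\operatorname{int}T$; write $g:=f^{r}$ and $D:=d^{r}$. Then $K$ is also the attractor of $g$, with $g^{n}(T)\subseteq T$ for all $n\ge 0$, so Remark~\ref{rem:manning} applied to $g$ (using $d_{\mathcal{N}}(g;K)=d_{\mathcal{N}}(f^{r};K)=d^{r}=D$, as established inside the proof of Theorem~\ref{thm:dattract}) gives $N(g^{n}(T)\subseteq T)=D^{n}$ for every $n\ge 1$. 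Fix a $\mathcal{C}^\infty$ core $\gamma$ of $T$. Since $g^{n}$ restricts to a diffeomorphism $T\to g^{n}(T)$, the curve $g^{n}(\gamma)$ is a smooth core of $g^{n}(T)$; hence, by the identity $N(\mathrm{core}\subseteq T)=N(T'\subseteq T)$ recalled at the end of Subsection~\ref{subsec:geometricindex}, we get $N(g^{n}(\gamma)\subseteq T)=N(g^{n}(T)\subseteq T)=D^{n}$.

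The geometric heart of the argument is the following elementary estimate: \emph{there is a finite positive constant $c_{0}$, depending only on $T$, such that every $\mathcal{C}^\infty$ closed curve $\alpha\subseteq T$ satisfies} $\operatorname{length}(\alpha)\ge c_{0}\,N(\alpha\subseteq T)$. To prove it, fix a framing of $T$ and let $\theta:T\to\mathbb{S}^1$ be the associated angular submersion, whose fibres are exactly the meridional disks of that framing; put $c_{0}:=1/\sup_{T}\|d\theta\|$. Parametrizing $\alpha$ by arc length and applying the one-dimensional area (Banach indicatrix) formula to $\theta\circ\alpha$ yields
\[
\int_{\mathbb{S}^1}\#(\theta\circ\alpha)^{-1}(s)\,ds=\int|(\theta\circ\alpha)'|\,dt\le( \sup\nolimits_{T}\|d\theta\| )\,\operatorname{length}(\alpha)=\operatorname{length}(\alpha)/c_{0}.
\]
As $\mathbb{S}^1$ has unit mass, there is a value $s_{0}$, regular for $\theta\circ\alpha$ by Sard, with $\#(\theta\circ\alpha)^{-1}(s_{0})\le\operatorname{length}(\alpha)/c_{0}$; for such $s_{0}$ the fibre $\theta^{-1}(s_{0})$ is a meridional disk of $T$ transverse to $\alpha$, so $N(\alpha\subseteq T)\le|\theta^{-1}(s_{0})\cap\alpha|\le\operatorname{length}(\alpha)/c_{0}$, as claimed.

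Applying the estimate to $\alpha=g^{n}(\gamma)$ gives $\operatorname{length}(g^{n}(\gamma))\ge c_{0}D^{n}$, hence $\limsup_{n}\tfrac1n\log\operatorname{length}(g^{n}(\gamma))\ge\log D$. Since $g$ is a $\mathcal{C}^\infty$ local diffeomorphism, $\gamma$ a fixed $\mathcal{C}^\infty$ curve, and all iterates $g^{n}(\gamma)$ lie in the compact set $T$ with $g(T)\subseteq T$, Yomdin's bound in the form stated in Appendix~\ref{app:yomdin} gives $h(g|_{T})\ge\limsup_{n}\tfrac1n\log\operatorname{length}(g^{n}(\gamma))\ge\log D$. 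It remains to note that $h(g|_{T})=h(g|_{K})$: by the variational principle $h(g|_{T})=\sup_{\mu}h_{\mu}(g)$ over $g$-invariant Borel probability measures on $T$, each of which is, by Poincar\'e recurrence, concentrated on the recurrent set of $g|_{T}$, and that set lies in $K$ because every forward $g$-orbit in $T$ converges to $K$; so these are exactly the $g|_{K}$-invariant measures and the two entropies agree. Combining everything, $r\,h(f|_{K})=h(f^{r}|_{K})=h(g|_{K})=h(g|_{T})\ge\log D=r\log d$, i.e.\ $h(f|_{K})\ge\log d_{\mathcal{N}}(f;K)$. The delicate point is the appeal to Yomdin's theorem (whence Appendix~\ref{app:yomdin}); the length estimate, though short, is precisely where the \emph{geometric} index — rather than the homological winding number, which would only control the degree of $\theta\circ\alpha$ — is what forces the length to grow, while the identification $h(g|_{T})=h(g|_{K})$ is routine.
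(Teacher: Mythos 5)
Your argument follows the same route as the paper's: reduce to $g=f^{r}$ on a positively invariant smooth solid torus $T$, use multiplicativity/invariance of the geometric index together with Remark~\ref{rem:manning} to get $N(g^{n}(T)\subseteq T)=d_{\mathcal N}(f;K)^{rn}$, push a core of $T$ forward, bound its length from below by the geometric index, and feed the resulting exponential growth rate into Yomdin's inequality; finally pass from $h(g|_{T})$ to $h(g|_{K})$. The proof is correct. You differ from the paper in two self-contained sub-lemmas. First, the length estimate $\ell(\alpha)\ge c_{0}\,N(\alpha\subseteq T)$: the paper's Lemma~\ref{lemma:curvesbound} works on the standard torus $V$ and gets the sharp constant $2\pi$ by a hands-on argument with angular sectors and Sard applied to the composed angular projection, whereas you invoke the Banach indicatrix (coarea) formula for $\theta\circ\alpha$ and get a non-sharp constant $c_{0}=1/\sup_{T}\|d\theta\|$ depending on $T$; your version is shorter and works on any smooth torus directly, and since only positivity of the constant matters for the exponential growth rate, the loss of sharpness is immaterial here. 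Second, the entropy concentration $h(g|_{T})=h(g|_{K})$: the paper cites Bowen's theorem that the non-wandering set carries all the entropy, while you use the variational principle plus the (easy) observation that every $g$-invariant probability on $T$ is supported on $K=\bigcap_{n}g^{n}(T)$; both are standard, and either route is fine. Everything else --- the choice of $r$, the identity $N(\mathrm{core}\subseteq T)=N(T'\subseteq T)$, the appeal to Yomdin as stated in Appendix~\ref{app:yomdin}, and the final rescaling $h(f^{r}|_{K})=rh(f|_{K})$ --- matches the paper.
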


Coupling the above with the inequality $d_{\mathcal{N}}(f;K) \geq \prod_i p_i$ from Theorem \ref{thm:dattract} yields $h(f|_K) \geq \log d_{\mathcal{N}}(f;K) \geq \log \prod_i p_i$ which is bound \eqref{eq:intro1} from the Introduction. The rest of the argument leading to Theorem \ref{thm:intro} was already detailed there.

The proof of Theorem \ref{thm:manning} depends, in turn, on the following result concerning the entropy of dynamics on a solid torus. For definiteness we denote by $V \subset \mathbb{R}^3$ the solid torus obtained by rotating around the $z$--axis the disk of radius $\nicefrac{1}{2}$ and center $(0,\nicefrac{3}{2},0)$ contained in the $yz$--plane.

\begin{theorem} \label{thm:entropy} Let $f : V \longrightarrow V$ a $\mathcal{C}^{\infty}$ embedding. Then the entropy of $f$ is bounded below by $h(f) \geq \log N(f(V) \subseteq V)$.
\end{theorem}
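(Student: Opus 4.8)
The plan is to deduce the entropy bound from Yomdin's theorem, which relates the topological entropy of a $\mathcal{C}^\infty$ map to the exponential growth rate of the volumes of iterated submanifolds. Concretely, I would use the fact (outlined in Appendix \ref{app:yomdin}) that for a $\mathcal{C}^\infty$ embedding $f : V \longrightarrow V$ and any compact smooth submanifold $W \subseteq V$ one has
\begin{equation*}
h(f) \geq \limsup_{n \to \infty} \frac{1}{n} \log \operatorname{vol}_k\bigl(f^n(W)\bigr),
\end{equation*}
where $k = \dim W$. The natural choice here is to take $W = D$ a meridional disk of $V$, so $k = 2$ and we must estimate how the area of $f^n(D)$ grows. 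The key geometric input is that each time $f$ is applied, the image of a meridional disk must cross meridional disks of the target at least $N := N(f(V) \subseteq V)$ times, in the sense controlled by the geometric index; iterating, $f^n(D)$ must cross a fixed meridional disk of $V$ at least $N^n$ times (using multiplicativity of the geometric index, property (P3), applied to the nested tori $f^n(V) \subseteq \cdots \subseteq f(V) \subseteq V$, since $N(f^n(V)\subseteq V) = N^n$).

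The second ingredient is a lower bound on area in terms of the number of transverse intersections with a fixed disk. Fix a smooth meridional disk $D_0$ of $V$ and, by a small perturbation (which does not affect entropy, nor the geometric index, and can be absorbed because $f$ is $\mathcal{C}^\infty$), arrange that $f^n(D)$ meets $D_0$ transversally. Then $f^n(D) \cap D_0$ consists of at least $N^n$ disjoint arcs or curves — more carefully, using a whole foliation of $V \setminus D_0$ by meridional disks $\{D_\theta\}_{\theta \in \mathbb{S}^1}$, for \emph{every} $\theta$ the surface $f^n(D)$ meets $D_\theta$ in at least $N^n$ points (counted geometrically, once we know the homological count is bounded below — but here I should be careful: the geometric index bounds the \emph{minimal} intersection number over meridional disks, and what I actually need is that the true intersection number is $\geq N^n$ for a \emph{generic} disk). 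Granting this, an integral-geometry / coarea argument gives
\begin{equation*}
\operatorname{vol}_2\bigl(f^n(D)\bigr) \;\geq\; c \int_{\mathbb{S}^1} \#\bigl(f^n(D) \cap D_\theta\bigr)\, d\theta \;\geq\; c' \, N^n
\end{equation*}
for constants $c, c' > 0$ depending only on $V$ (and the chosen foliation), via the fact that projecting $V$ onto the $\theta$-circle is a submersion with bounded derivative, so the $2$-area of a surface dominates a constant times the average of the fibrewise point-counts. Combining this with Yomdin's inequality gives $h(f) \geq \limsup \frac1n \log(c' N^n) = \log N$, as desired.

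The main obstacle I anticipate is the middle step: converting the geometric-index information — which is a statement about \emph{some} (optimal) meridional disk and about \emph{meridional} disks of the image tori being the intersection — into a robust lower bound on the geometric intersection number of $f^n(D)$ with a \emph{fixed, generic} meridional disk $D_\theta$ of $V$. One has to rule out the possibility that $f^n(D)$ folds back on itself so that although it winds $N^n$ times it meets some particular $D_\theta$ in very few points; the resolution is that it cannot fail to do so for \emph{all} $\theta$ simultaneously — if $f^n(D) \cap D_\theta$ had fewer than $N^n$ points for every $\theta$, one could cut and reglue to produce a meridional disk of $V$ witnessing $N(f^n(V) \subseteq V) < N^n$, contradicting multiplicativity; alternatively, since $D$ generates $H_2(V, \partial V)$, the image $f^n(D)$ carries the fundamental class and the homological winding/intersection number with $D_\theta$ is exactly computable, and the geometric count is at least the absolute value of the homological one only after one knows the relevant homology class is nonzero — so some care with orientations (the map $f$ on $V$ may reverse orientation, but $f^2$ does not, and replacing $f$ by $f^2$ only halves the entropy bound, which one then recovers by the standard $h(f^2) = 2h(f)$) will be needed. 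A secondary technical point is the transversality perturbation of $f^n(D)$: one wants it uniform enough across all $n$ that the constant $c'$ does not degrade, which is where the precise statement of Yomdin's bound from the appendix — allowing an arbitrary fixed smooth $W$ and taking a $\limsup$ — does the work for us.
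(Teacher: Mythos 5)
Your plan takes the dual route to the paper's: you iterate a meridional disk $D$ of $V$ and estimate areas via the general Yomdin theorem, whereas the paper iterates a core curve $\sigma$ of $V$ and estimates lengths via the one-dimensional case of Yomdin's inequality. This is not a cosmetic difference. Because $N(f^n(V)\subseteq V)$ is, by definition, the intersection number $N\bigl(f^n\circ\sigma \subseteq V\bigr)$ of a core curve with a meridional disk, the curve picture feeds directly into a length bound (Lemma \ref{lemma:curvesbound}: a curve meeting all radial meridional disks in $\geq N^n$ points has length $\geq 2\pi N^n$), and the appendix proves Yomdin's inequality only for curves; the surface version you invoke is not actually established there.

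The concrete gap in your argument is dimensional, and it occurs in the coarea step. With $W = D$ a $2$-disk, both $f^n(D)$ and a fixed meridional disk $D_\theta$ are surfaces in a $3$-manifold, so their transverse intersection is a $1$-manifold (arcs and circles), not a finite set of points, and the expression $\#\bigl(f^n(D)\cap D_\theta\bigr)$ is not what the coarea formula delivers. Applying the coarea formula to the angular projection $V\to\mathbb{S}^1$ restricted to $f^n(D)$ gives, since the codimension of the fibers inside $f^n(D)$ is $1$,
\begin{equation*}
\operatorname{vol}_2\bigl(f^n(D)\bigr) \;\geq\; c\int_{\mathbb{S}^1} \operatorname{length}\bigl(f^n(D)\cap D_\theta\bigr)\,d\theta,
\end{equation*}
i.e.\ a bound in terms of the total \emph{length} of the slices, not the number of their connected components. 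A surface can meet every $D_\theta$ in many components whose total length is tiny, so a large component count for all $\theta$ does not by itself force $\operatorname{vol}_2\bigl(f^n(D)\bigr)\gtrsim N^n$. To repair this you would need a uniform lower bound on the length of each essential intersection arc, which is not clear; or you would pass to the $1$-dimensional slices of a curve, where the coarea formula does count points and the count is directly controlled by the geometric index --- but that is precisely the paper's argument with the core curve. (The subtlety you flag about optimal versus generic meridional disks is real but secondary; for the core curve the paper handles it with Sard's theorem in Lemma \ref{lemma:curvesbound}.)
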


\begin{proof}[Proof of Theorem \ref{thm:manning} from Theorem \ref{thm:entropy}] Let $T \subseteq \mathcal{A}(K)$ be a smooth solid torus neighbourhood of $K$ and $r$ big enough so that $f^r(T) \subseteq T$. Consider the restriction $f^r|_T : T \longrightarrow T$, which is a dynamical system on $T$ that still has $K$ as its (global in $T$) attractor. This implies that $K$ contains the non-wandering set of $f^r|_T$ and therefore by a result of Bowen (see \cite{bowen1} or \cite{robinson1}) it concentrates all the entropy: $h(f^r|_K) = h(f^r|_T)$. Now observe that Theorem \ref{thm:entropy} is valid not only for dynamics on $V$ but on any smooth solid torus. This is a direct consequence of the invariance of both entropy and the geometric index under conjugation. Thus for $f^r$ on the smooth solid torus $T$ we may write \[h(f^r|_T) \geq \log N(f^r(T) \subseteq T) =  r \log d_{\mathcal{N}}(f;K)\] where in the last step we have used Remark \ref{rem:manning}.(1). Finally, a standard property of entropy ensures that $h(f^r|_K) = r h(f|_K)$ and putting all this together yields $h(f|_K) \geq \log d_{\mathcal{N}}(f;K)$.
\end{proof}

It remains to prove Theorem \ref{thm:entropy}. The argument proceeds by showing that the length of a curve in a solid torus is bounded below by its geometric index (Lemma \ref{lemma:curvesbound}) and then applying an inequality of Yomdin (\cite{yomdin}) which relates volume growth rate and topological entropy for smooth dynamics. We only need a very particular case of the inequality, which we describe now.

Given a $\mathcal C^{\infty}$ path in $V$, $\sigma:[0,1]\longrightarrow V$, its length is given by the usual formula
\begin{equation*}
    \ell(\sigma):=\int_{[0,1]}\|\sigma'(t)\| dt.
\end{equation*}

Suppose $f : V \longrightarrow V$ is a $\mathcal{C}^{\infty}$ map (not necessarily injective). We consider the iterates of $\sigma$ under the dynamics generated by $f$; i.e. the paths $f^n \circ \sigma$, and the exponential growth rate of their lengths \[\overline{\lim}_{n\rightarrow \infty}\frac{1}{n}\log\ell(f^n\circ \sigma).\] 

The inequality that we need is the following:  \begin{equation} \label{eq:yomdin} \overline{\lim}_{n\rightarrow \infty}\frac{1}{n}\log\ell(f^n\circ \sigma) \leq h(f).\end{equation}

We have included a proof of \eqref{eq:yomdin} in Appendix \ref{app:yomdin}. This is done both for completeness and because, while in the general case the proof is very involved, in our particular setting it becomes fairly accessible while still retaining the essential ideas.

An extremely crude intuition of why \eqref{eq:yomdin} might be reasonable is as follows. Suppose $[0,1]$ is partitioned into intervals $I_i$ so that each portion $\sigma(I_i)$ of the curve is contained in an $(n,\epsilon)$-dynamical ball. This implies that the endpoints of $f^k \circ \sigma|_{I_i}$ lie at a distance less than $\epsilon$ for $k = 0,\ldots,n$ and, if the curve $f^k \circ \sigma|_{I_i}$ does not oscillate too much, its length will then be of order $\epsilon$. Thus the total length $\ell(f^k \circ \sigma)$ will be of the order of $\epsilon$ times the number of intervals $I_i$, which in turn is related to the minimal number of dynamical balls $S(n,\epsilon)$ needed to cover $V$ and therefore to the entropy $h(f)$. This heuristic idea breaks down if the curve $f^k \circ \sigma$ oscillates a lot. If it does so confined within a small ball the oscillations will have a large contribution to the length but not to the entropy. If it oscillates entering and exiting a dynamical ball a large number of times, then the number of intervals $I_i$ might grossly overestimate $S(n,\epsilon)$. The assumption on the smoothness of $f$ provides control over this phenomenon.

In order to apply \eqref{eq:yomdin} we need to bound from below the length of paths in the solid torus $V$. This is the content of the following lemma:

\begin{lemma}\label{lemma:curvesbound}
For every regular $\mathcal{C}^{\infty}$ parametrization $\gamma:[0,1] \longrightarrow V$ of a simple closed curve, its length $\ell(\gamma)$ is bounded as follows:
\begin{equation*}
    \ell(\gamma)\ge 2 \pi N(\gamma\subseteq V).
\end{equation*}
\end{lemma}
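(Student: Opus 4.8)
The goal is to show $\ell(\gamma) \geq 2\pi N(\gamma \subseteq V)$ for a regular $\mathcal{C}^\infty$ simple closed curve $\gamma$ in the standard solid torus $V$. The natural idea is to exploit the rotational symmetry of $V$: the torus $V$ fibers over the circle $\mathbb{S}^1$ via the angular coordinate $\theta$ (the angle around the $z$-axis), and the meridional disks of $V$ are precisely the fibers $D_\theta$ of this projection. The geometric index $N(\gamma \subseteq V)$ is computed by minimizing $|D \cap \gamma|$ over all meridional disks $D$; since all the fibers $D_\theta$ are genuine meridional disks, we have $N(\gamma \subseteq V) \leq |D_\theta \cap \gamma|$ for every $\theta$ at which the intersection is transverse, and in fact $N(\gamma \subseteq V)$ equals the minimum of this count over $\theta$. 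So the first step is to set up the angular projection $\pi : V \to \mathbb{S}^1$ and record that $|\pi^{-1}(\theta) \cap \gamma| \geq N(\gamma \subseteq V)$ for almost every $\theta$ (all but finitely many $\theta$ give transverse intersections, by Sard's theorem applied to $\pi \circ \gamma$, and one must also check the intersection consists of the right type of points — but since $V$ is the standard torus the fibers are honest meridional disks and no subtlety arises).

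\textbf{The core estimate.} Write $\gamma(t) = (r(t), \theta(t), z(t))$ in cylindrical-type coordinates adapted to $V$, where $\theta(t)$ is the angular coordinate. The length is $\ell(\gamma) = \int_0^1 \|\gamma'(t)\|\,dt \geq \int_0^1 |(\text{angular part of } \gamma'(t))|\,dt$. Since the $\theta$-circle in $V$ has radius at least... well, more carefully: at a point of $V$ the distance from the $z$-axis is between $1$ and $2$, so a displacement $d\theta$ in the angular direction contributes arclength $\rho\, d\theta$ with $\rho \geq 1$. Hence $\|\gamma'(t)\| \geq \rho(t)\,|\theta'(t)| \geq |\theta'(t)|$, giving $\ell(\gamma) \geq \int_0^1 |\theta'(t)|\,dt$. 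Now $\int_0^1 |\theta'(t)|\,dt$ is the total variation of the angular coordinate along $\gamma$, and by the coarea/layer-cake principle this equals $\int_{\mathbb{S}^1} \#\{t : \theta(t) = \theta\}\,d\theta = \int_{\mathbb{S}^1} |\pi^{-1}(\theta) \cap \gamma|\,d\theta$ (here $\theta$ ranges over $\mathbb{S}^1$, which has total measure $2\pi$). Combining with the bound $|\pi^{-1}(\theta) \cap \gamma| \geq N(\gamma \subseteq V)$ for a.e. $\theta$ yields $\ell(\gamma) \geq \int_{\mathbb{S}^1} N(\gamma \subseteq V)\,d\theta = 2\pi N(\gamma \subseteq V)$.

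\textbf{Main obstacle.} The heart of the argument — the inequality $\|\gamma'\| \geq |\theta'|$ and the coarea identity $\int |\theta'| = \int_{\mathbb{S}^1} \#\theta^{-1}(\theta)\,d\theta$ — is elementary. The delicate point is the topological claim $|\pi^{-1}(\theta) \cap \gamma| \geq N(\gamma \subseteq V)$: one must justify that the fibers $D_\theta = \pi^{-1}(\theta)$ genuinely qualify as meridional disks transverse to $\gamma$ in the sense of the definition given in Subsection~\ref{subsec:geometricindex}, for all but finitely many $\theta$. For the standard torus $V$ this is geometrically clear — each $D_\theta$ is a flat half-disk, a genuine meridional disk — but one should invoke Sard's theorem on $\theta \circ \gamma : [0,1] \to \mathbb{S}^1$ to get that for a.e. $\theta$ the value $\theta$ is regular, so $\gamma$ meets $D_\theta$ transversally in finitely many points, none of them on $\partial V$, and the local transversality model required by the definition holds. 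Then $|\pi^{-1}(\theta)\cap\gamma| \geq N(\gamma \subseteq V)$ follows directly from the minimality in the definition of the geometric index. One should also note at the outset that if $N(\gamma \subseteq V) = 0$ the inequality is trivial, so we may assume it is positive; and the case where $\gamma$ is entirely contained in a meridional disk (so $\theta$ is constant) cannot occur for a simple closed curve with positive geometric index.
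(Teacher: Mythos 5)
Your proposal is correct and rests on exactly the same geometric facts as the paper's proof: (a) for almost every angle $\theta$ the radial half-disk $D_\theta$ is a meridional disk transverse to $\gamma$, so by minimality $|D_\theta \cap \gamma| \geq N(\gamma \subseteq V)$; and (b) since every point of $V$ lies at distance $\geq 1$ from the $z$-axis, $\|\gamma'\| \geq \rho\,|\theta'| \geq |\theta'|$. Where you differ from the paper is in how these two facts are combined. You invoke the coarea (Banach indicatrix) formula $\int_0^1 |\theta'(t)|\,dt = \int_{\mathbb{S}^1}\#\{t : \theta(\gamma(t))=\alpha\}\,d\alpha$ and integrate the fiber-count bound directly, arriving at $\ell(\gamma)\geq 2\pi N(\gamma\subseteq V)$ in one stroke. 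The paper instead carries out what amounts to an elementary, by-hand proof of the same inequality: it covers the critical values of the angular map by arcs $c_i$ of total length $<\epsilon$ (via Sard), slices $V$ into the complementary angular sectors, shows through two topological claims that each sector contains at least $N(\gamma\subseteq V)$ arcs of $\gamma$ each of length at least the sector's angular width, and sums, letting $\epsilon\to 0$. So the two proofs are logically equivalent; yours is shorter but leans on a known measure-theoretic identity, while the paper's is longer but entirely self-contained and more geometrically explicit, which is consistent with the geometric-topology flavor of the rest of the paper. One minor point you could make tighter: you should be explicit that the local transversality model required by Definition~\ref{defn:Ntop} (adapted to curves) does hold at a regular value $\theta$ of $\theta\circ\gamma$, i.e.\ that regularity of the value gives not just finitely many transverse crossings but the required product collar around $D_\theta$; for the flat fibers of $V$ this is immediate, and the paper similarly takes it as read.
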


\begin{proof} We shall regard $\gamma$ as a diffeomorphic embedding $\gamma : \mathbb{S}^1 \longrightarrow V$ and denote its image also by $\gamma$.

Let $S \subseteq V$ be the shortest longitude of the solid torus $V$; namely the circumference in the $\{z=0\}$ plane, of radius $1$ and centered at the origin. Fix some orientation on $S$. We define a mapping $h : \mathbb{S}^1 \longrightarrow S$ which captures just the angular information in $\gamma$ as follows: $h$ is the composition of (i) the parameterization $\gamma$, followed by (ii) the orthogonal projection of $\mathbb R^3\setminus z\text{-axis}$ onto the punctured plane $\{z=0\}\setminus\{(0,0,0)\}$, followed finally by (iii) the radial retraction of the latter onto $S$. Each of these maps is differentiable so $h$ is differentiable as well.

Fix $0 < \epsilon < 2\pi$. By Sard's theorem, the set of critical values of $h$ can be covered by a family of open arcs $c_i$ whose lengths add up to less than $\epsilon$. The $c_i$ can be taken to be mutually disjoint and finite in number. It is possible that $h$ has no critical values: this happens precisely when $\gamma$ winds monotonically inside $V$, without doubling back. To avoid having to discuss that somewhat trivial case separately, we then take the family of arcs $\{c_i\}$ to consist of a single arc of length less than $\epsilon$.

Write $c_1, \ldots, c_n$ for the arcs in the covering with the convention that indices are taken cyclically (modulo $n$) as we move along $S$ in the positive orientation and denote by $e_i^1$ and $e_i^2$ the endpoints of $c_i$. Let $D_i^1$ and $D_i^2$ be the radial meridional disks of $V$ that go through the points $e_i^1$ and $e_i^2$ (that is, $D_i^j$ is the intersection of $V$ with the plane that contains the $z$-axis and the point $e_i^j$). Figure \ref{fig:cota} illustrates the definitions showing $V$ viewed from the top.

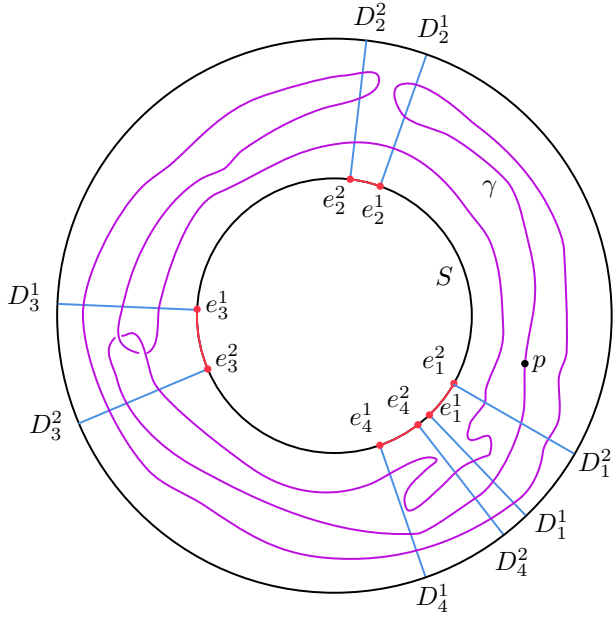
\begin{figure}[h]
    \centering
    \begin{tikzpicture}[x=0.75pt,y=0.75pt,yscale=-1,xscale=1]
\end{tikzpicture}
\tikzset{every picture/.style={line width=0.75pt}} 

\begin{tikzpicture}[x=0.75pt,y=0.75pt,yscale=-1,xscale=1]

\draw [color={rgb, 255:red, 74; green, 144; blue, 226 }  ,draw opacity=1 ]   (357,49.33) -- (346,80.51) -- (338.05,103.06) -- (333.6,115.67) ;
\draw [color={rgb, 255:red, 74; green, 144; blue, 226 }  ,draw opacity=1 ]   (333.45,246.58) -- (356.65,313.56) ;
\draw [color={rgb, 255:red, 74; green, 144; blue, 226 }  ,draw opacity=1 ]   (352.64,236.2) -- (395.85,291.76) ;
\draw   (170.67,181.17) .. controls (170.67,103.94) and (233.27,41.33) .. (310.5,41.33) .. controls (387.73,41.33) and (450.33,103.94) .. (450.33,181.17) .. controls (450.33,258.39) and (387.73,321) .. (310.5,321) .. controls (233.27,321) and (170.67,258.39) .. (170.67,181.17) -- cycle ;
\draw   (241.19,181.17) .. controls (241.19,142.89) and (272.22,111.86) .. (310.5,111.86) .. controls (348.78,111.86) and (379.81,142.89) .. (379.81,181.17) .. controls (379.81,219.44) and (348.78,250.47) .. (310.5,250.47) .. controls (272.22,250.47) and (241.19,219.44) .. (241.19,181.17) -- cycle ;
\draw [color={rgb, 255:red, 74; green, 144; blue, 226 }  ,draw opacity=1 ]   (326.65,42.16) -- (318.45,112.31) ;
\draw [color={rgb, 255:red, 74; green, 144; blue, 226 }  ,draw opacity=1 ]   (358.41,231.25) -- (407,282) ;
\draw [color={rgb, 255:red, 74; green, 144; blue, 226 }  ,draw opacity=1 ]   (171,175.2) -- (241.37,178.21) ;
\draw [color={rgb, 255:red, 74; green, 144; blue, 226 }  ,draw opacity=1 ]   (181.4,235.6) -- (246.78,208.03) ;
\draw  [color={rgb, 255:red, 189; green, 16; blue, 224 }  ,draw opacity=1 ][line width=0.75] [line join = round][line cap = round] (183.67,180.67) .. controls (183.67,161.25) and (200.17,129.5) .. (213.67,116) .. controls (218.48,111.19) and (224.68,107.96) .. (229.67,103.33) .. controls (243.2,90.77) and (254.05,78.18) .. (271.67,71.33) .. controls (280.8,67.78) and (306.48,60.33) .. (315,60) .. controls (321.01,59.77) and (332.25,55.36) .. (333,61.33) .. controls (334.29,71.68) and (312.2,74.04) .. (304.33,76.67) .. controls (289.92,81.47) and (276.03,89.17) .. (263.67,98) .. controls (260.69,100.13) and (257.3,105.85) .. (254.33,107.33) .. controls (250.42,109.29) and (246.02,110.3) .. (242.33,112.67) .. controls (223.3,124.9) and (210.09,149.94) .. (205,171.33) .. controls (201.57,185.75) and (196.76,196.27) .. (211.67,200) ;
\draw  [color={rgb, 255:red, 189; green, 16; blue, 224 }  ,draw opacity=1 ][line width=0.75] [line join = round][line cap = round] (216.13,200) .. controls (223.13,196.5) and (220.76,183.07) .. (222.13,176.67) .. controls (225.35,161.69) and (236.22,125.87) .. (251.5,119.33) .. controls (274.88,109.33) and (287.99,96.9) .. (315.57,94) .. controls (342.43,91.18) and (362.54,106.35) .. (376.31,126) .. controls (380.94,132.61) and (386.8,137.72) .. (389.65,145.33) .. controls (394.83,159.13) and (395.01,172.98) .. (395.66,188) .. controls (396.2,200.5) and (390.03,207.37) .. (386.98,218) .. controls (386.43,219.93) and (387.11,223.43) .. (386.32,225.33) .. controls (385.49,227.3) and (379.19,234.02) .. (378.98,235.33) .. controls (378.54,237.97) and (376.59,241.28) .. (378.31,243.33) .. controls (380.59,246.07) and (387.62,239.38) .. (388.99,242.67) .. controls (390.01,245.13) and (389.67,248.09) .. (388.99,250.67) .. controls (388.6,252.11) and (386.35,252.06) .. (384.98,252.67) .. controls (381.35,254.28) and (377.46,255.58) .. (374.3,258) .. controls (369.94,261.35) and (366.55,265.85) .. (362.29,269.33) .. controls (361.1,270.31) and (349.06,282.91) .. (346.94,278.67) .. controls (342.64,270.08) and (351.93,263.68) .. (356.95,258.67) .. controls (358.62,257) and (363.74,254.39) .. (361.62,253.33) .. controls (354.1,249.58) and (336.82,262.84) .. (328.92,266) .. controls (308.08,274.33) and (286.45,270.2) .. (269.52,258.67) .. controls (258.08,250.88) and (224.96,225.64) .. (219.46,214.67) .. controls (216.18,208.11) and (214.99,191.24) .. (208.78,190) .. controls (206.99,189.64) and (205.28,191.33) .. (203.44,191.33) ;
\draw  [color={rgb, 255:red, 189; green, 16; blue, 224 }  ,draw opacity=1 ][line width=0.75] [line join = round][line cap = round] (199.72,192.4) .. controls (193.16,199) and (198.98,205.57) .. (201.06,213.91) .. controls (205.49,231.72) and (223.05,247.92) .. (239.16,256.92) .. controls (273.25,275.96) and (311.31,293.36) .. (352.12,291.2) .. controls (357.63,290.9) and (377.4,275.87) .. (382.87,272.38) .. controls (393.97,265.27) and (405.68,239.45) .. (406.26,226.01) .. controls (406.81,213.36) and (405.81,212.6) .. (406.93,202.48) .. controls (409.21,181.9) and (415.39,172.64) .. (408.94,146.7) .. controls (406.45,136.71) and (404.32,123.28) .. (398.91,115.12) .. controls (394.71,108.79) and (373.83,92.26) .. (365.49,88.91) .. controls (360.01,86.7) and (349.64,84.39) .. (345.44,80.17) .. controls (342.66,77.38) and (337.61,67.22) .. (343.43,64.71) .. controls (355.51,59.51) and (378.49,82.45) .. (380.86,84.2) .. controls (394.7,94.38) and (406.65,106.33) .. (415.62,121.17) .. controls (419.16,127.03) and (425.91,139.85) .. (426.32,148.05) .. controls (427.16,165.06) and (428.21,182.1) .. (427.65,199.12) .. controls (427.49,204.01) and (423.81,208.35) .. (423.64,213.24) .. controls (423.42,219.95) and (424.95,226.81) .. (423.64,233.4) .. controls (422.43,239.49) and (416.39,244.04) .. (415.62,250.2) .. controls (414.45,259.66) and (409.99,268.72) .. (403.59,273.72) .. controls (375.88,295.4) and (342.42,306.84) .. (305.33,303.29) .. controls (279.93,300.86) and (265.25,290.54) .. (245.17,277.08) .. controls (235.81,270.81) and (224.09,267.23) .. (216.43,258.94) .. controls (199.32,240.41) and (183.68,204.18) .. (183.68,180.31) ;
\draw  [draw opacity=0] (246.61,208.06) .. controls (243.12,199.79) and (241.19,190.71) .. (241.19,181.17) .. controls (241.19,180.08) and (241.22,178.99) .. (241.27,177.91) -- (310.5,181.17) -- cycle ; \draw [color={rgb, 255:red, 255; green, 50; blue, 69}  ,draw opacity=1 ]   (246.61,208.06) .. controls (243.12,199.79) and (241.19,190.71) .. (241.19,181.17) .. controls (241.19,180.08) and (241.22,178.99) .. (241.27,177.91) ; \draw [shift={(241.27,177.91)}, rotate = 267.74] [color={rgb, 255:red, 255; green, 50; blue, 69}  ,draw opacity=1 ][fill={rgb, 255:red, 255; green, 50; blue, 69}  ,fill opacity=1 ][line width=0.75]      (0, 0) circle [x radius= 1.34, y radius= 1.34]   ; \draw [shift={(246.61,208.06)}, rotate = 252.21] [color={rgb, 255:red, 255; green, 50; blue, 69}  ,draw opacity=1 ][fill={rgb, 255:red, 255; green, 50; blue, 69}  ,fill opacity=1 ][line width=0.75]      (0, 0) circle [x radius= 1.34, y radius= 1.34]   ;
\draw  [draw opacity=0] (318.45,112.31) .. controls (323.69,112.91) and (328.75,114.09) .. (333.56,115.79) -- (310.5,181.17) -- cycle ; \draw [color={rgb, 255:red, 255; green, 50; blue, 69}  ,draw opacity=1 ]   (318.45,112.31) .. controls (323.69,112.91) and (328.75,114.09) .. (333.56,115.79) ; \draw [shift={(333.56,115.79)}, rotate = 14.49] [color={rgb, 255:red, 255; green, 50; blue, 69}  ,draw opacity=1 ][fill={rgb, 255:red, 255; green, 50; blue, 69}  ,fill opacity=1 ][line width=0.75]      (0, 0) circle [x radius= 1.34, y radius= 1.34]   ; \draw [shift={(318.45,112.31)}, rotate = 11.52] [color={rgb, 255:red, 255; green, 50; blue, 69}  ,draw opacity=1 ][fill={rgb, 255:red, 255; green, 50; blue, 69}  ,fill opacity=1 ][line width=0.75]      (0, 0) circle [x radius= 1.34, y radius= 1.34]   ;
\draw  [draw opacity=0] (370.79,215.37) .. controls (367.45,221.26) and (363.26,226.6) .. (358.41,231.25) -- (310.5,181.17) -- cycle ; \draw [color={rgb, 255:red, 255; green, 50; blue, 69}  ,draw opacity=1 ]   (370.79,215.37) .. controls (367.45,221.26) and (363.26,226.6) .. (358.41,231.25) ; \draw [shift={(358.41,231.25)}, rotate = 131.29] [color={rgb, 255:red, 255; green, 50; blue, 69}  ,draw opacity=1 ][fill={rgb, 255:red, 255; green, 50; blue, 69}  ,fill opacity=1 ][line width=0.75]      (0, 0) circle [x radius= 1.34, y radius= 1.34]   ; \draw [shift={(370.79,215.37)}, rotate = 124.64] [color={rgb, 255:red, 255; green, 50; blue, 69}  ,draw opacity=1 ][fill={rgb, 255:red, 255; green, 50; blue, 69}  ,fill opacity=1 ][line width=0.75]      (0, 0) circle [x radius= 1.34, y radius= 1.34]   ;
\draw  [draw opacity=0] (352.64,236.2) .. controls (346.88,240.61) and (340.42,244.14) .. (333.45,246.58) -- (310.5,181.17) -- cycle ; \draw [color={rgb, 255:red, 255; green, 50; blue, 69}  ,draw opacity=1 ]   (352.64,236.2) .. controls (346.88,240.61) and (340.42,244.14) .. (333.45,246.58) ; \draw [shift={(333.45,246.58)}, rotate = 155.53] [color={rgb, 255:red, 255; green, 50; blue, 69}  ,draw opacity=1 ][fill={rgb, 255:red, 255; green, 50; blue, 69}  ,fill opacity=1 ][line width=0.75]      (0, 0) circle [x radius= 1.34, y radius= 1.34]   ; \draw [shift={(352.64,236.2)}, rotate = 147.55] [color={rgb, 255:red, 255; green, 50; blue, 69}  ,draw opacity=1 ][fill={rgb, 255:red, 255; green, 50; blue, 69}  ,fill opacity=1 ][line width=0.75]      (0, 0) circle [x radius= 1.34, y radius= 1.34]   ;
\draw [color={rgb, 255:red, 74; green, 144; blue, 226 }  ,draw opacity=1 ]   (370.35,215.64) -- (404.59,235.36) -- (431.4,250.8) ;
\draw  [fill={rgb, 255:red, 0; green, 0; blue, 0 }  ,fill opacity=1 ] (405.21,205.36) .. controls (405.21,204.6) and (405.83,203.98) .. (406.59,203.98) .. controls (407.35,203.98) and (407.96,204.6) .. (407.96,205.36) .. controls (407.96,206.12) and (407.35,206.73) .. (406.59,206.73) .. controls (405.83,206.73) and (405.21,206.12) .. (405.21,205.36) -- cycle ;

\draw (353.3,195) node [anchor=north west][inner sep=0.75pt]    {$e_{1}^{2}$};
\draw (362,219) node [anchor=north west][inner sep=0.75pt]    {$e_{1}^{1}$};
\draw (316.57,224) node [anchor=north west][inner sep=0.75pt]    {$e_{4}^{1}$};
\draw (244,167) node [anchor=north west][inner sep=0.75pt]    {$e_{3}^{1}$};
\draw (335.47,215) node [anchor=north west][inner sep=0.75pt]    {$e_{4}^{2}$};
\draw (248.9,194) node [anchor=north west][inner sep=0.75pt]    {$e_{3}^{2}$};
\draw (350.5,28) node [anchor=north west][inner sep=0.75pt]    {$D_{2}^{1}$};
\draw (318.5,22) node [anchor=north west][inner sep=0.75pt]    {$D_{2}^{2}$};
\draw (144,163) node [anchor=north west][inner sep=0.75pt]    {$D_{3}^{1}$};
\draw (155,228) node [anchor=north west][inner sep=0.75pt]    {$D_{3}^{2}$};
\draw (410,278) node [anchor=north west][inner sep=0.75pt]    {$D_{1}^{1}$};
\draw (432,247) node [anchor=north west][inner sep=0.75pt]    {$D_{1}^{2}$};
\draw (383.5,111.5) node [anchor=north west][inner sep=0.75pt]    {$\gamma $};
\draw (322.7,119) node [anchor=north west][inner sep=0.75pt]    {$e_{2}^{1}$};
\draw (303,114) node [anchor=north west][inner sep=0.75pt]    {$e_{2}^{2}$};
\draw (409,200) node [anchor=north west][inner sep=0.75pt]    {$p$};
\draw (390,297) node [anchor=north west][inner sep=0.75pt]    {$D_{4}^{2}$};
\draw (350,315) node [anchor=north west][inner sep=0.75pt]    {$D_{4}^{1}$};

\draw (360,155) node [anchor=north west][inner sep=0.75pt]    {$S$};

\end{tikzpicture}
    \caption{Setup for Lemma \ref{lemma:curvesbound} \label{fig:cota}}
    
\end{figure}

Fix any one of the indices $i$ and denote by $V_i$ the closed sector of $V$ comprised between the disks $D_i^2$ and $D_{i+1}^1$. Suppose $t \in \mathbb{S}^1$ is such that $p := \gamma(t)$ belongs to the interior of the sector $V_i$. Then:
\smallskip

{\it Claim.} $p$ lies in an arc of $\gamma$ which is entirely contained in the sector $V_i$ and joins the disks $D_i^2$ and $D_{i+1}^1$.

{\it Proof of claim.} Define $J \subseteq \mathbb{S}^1$ to be the closure of the connected component of $\mathbb{S}^1 \backslash \gamma^{-1}(\bigcup_{l,m} D_l^m)$ which contains $t$. Notice that $\gamma^{-1}(\bigcup_{l,m} D_l^m)$ is closed in $\mathbb{S}^1$, so it is disjoint from ${\rm int}\ J$ and contains $\partial J$. Hence $\gamma(J)$ is an arc in $V$ whose interior is disjoint from $\bigcup_{l,m} D_l^m$ and whose endpoints are contained in $\bigcup_{l,m} D_l^m$. Since the arc is connected and contains $p$, it must be completely contained in the sector $V_i$ and its endpoints must lie in $D_i^2 \cup D_{i+1}^1$. Thus $h(J)$ is an arc in $S$ whose endpoints are contained in $\{e_i^2,e_{i+1}^1\}$ and which is disjoint from all the $c_j$. The latter implies that $h|_J$ has no critical points, and so in particular it cannot map both endpoints of $J$ onto the same point $e_i^2$ or $e_{i+1}^1$ (otherwise it would reach some local extremum on $J$). Hence the endpoints of $\gamma(J)$ indeed lie one in $D_i^2$ and the other in $D_{i+1}^1$. ${\blacksquare}$
\smallskip

Now pick a point $p_0 \in D_i^2 \cap \gamma$. The following holds:
\smallskip

{\it Claim.} $p_0$ is an endpoint of an arc of $\gamma$ which is entirely contained in the sector $V_i$ and whose other endpoint is contained in $D_{i+1}^1$.

{\it Proof of claim.} Write $p_0 = \gamma(t_0)$ for some $t_0 \in \mathbb{S}^1$. Since $\gamma$ is transverse to the disk $D_i^2$, there exists a small arc $I \subseteq \mathbb{S}^1$ centered at $t_0$ such that $\gamma(I)$ intersects $D_i^2$ precisely at $p_0$ and both components of $\gamma(I \backslash \{t_0\})$ lie on different sides of $D_i^2$. Since the latter is the common boundary of $V_i$ and $V_{i-1}$, one of the components $I_0$ of $I \backslash \{t_0\}$ satisfies that $\gamma(I_0)$ is contained in the interior of $V_i$. It only remains to pick any $t \in I_0$ and apply the previous claim to $p := \gamma(t)$. ${\blacksquare}$
\smallskip

The two claims above imply that the number of arcs of $\gamma$ in the sector $V_i$ is precisely $|D_i^2 \cap \gamma|$, and each of them has a length at least $ d_i^{i+1}$ where $d_i^{i+1}$ is the angle between $D_i^2$ and $D_{i+1}^1$. Since $|D_i^2 \cap \gamma| \geq N(\gamma \subseteq V)$ by the definition of the geometric index, the total contribution of these arcs to the length of $\gamma$ is bounded below by $d_i^{i+1}N(\gamma\subseteq V)$. Hence

\begin{equation*}
    \ell(\gamma) \ge N(\gamma\subseteq V)\sum_{i}d_i^{i+1} \geq N(\gamma \subseteq V) (2 \pi - \varepsilon)
\end{equation*} where the last inequality makes use of the fact that $\sum_{i}d_i^{i+1}\ge 2\pi-\varepsilon$ by the choice of the covering $\{c_i\}$. Since this is true for every $\varepsilon > 0$ the lemma follows.
\end{proof}

The proof of Theorem \ref{thm:entropy} is now straightforward. Recall that $f : V \longrightarrow V$ is a $\mathcal{C}^{\infty}$ embedding. Let $\sigma$ be a smooth core of $V$ (for example its centerline).  By the preceding lemma applied to $\gamma = f^n \circ \sigma$ \[\ell(f^n \circ \sigma) \geq 2\pi N(f^n \circ \sigma \subseteq V) = 2 \pi N(f^n(V) \subseteq V)\] where the last equality owes to the definition of the geometric index of a curve and the fact that $f^n \circ \sigma$ is a core of $f^n(V)$. Observe that for every $n \geq 1$ we have $f^n(V) \subseteq f^{n-1}(V) \subseteq \ldots \subseteq f(V) \subseteq V$ and so by multiplicativity and invariance under homeomorphisms of the geometric index $N(f^n(V) \subseteq V) = \prod_{k=0}^{n-1} N(f^{k+1}(V) \subseteq f^k(V)) = \prod_{k=0}^{n-1} N(f(V) \subseteq V) = N(f(V) \subseteq V)^n$. Therefore $\ell(f^n \circ \sigma) \geq 2\pi N(f(V) \subseteq V)^n$ and now by inequality \eqref{eq:yomdin} \begin{equation*} h(f) \geq 
   \overline{\lim}_{n\rightarrow \infty}\frac{1}{n}\log \ell(f^n\circ\sigma)\ge\log N(f(V) \subseteq V).
\end{equation*} as was to be shown.

\section{Concluding remarks} \label{sec:concluding}

\subsection{} The formal similarities between the homological winding number $m$ and the geometric index $N$ imply that one can obtain ``homological'' counterparts to the definitions and results above by replacing $N$ with $m$ throughout. Thus one can define homological prime divisors $q_i$ of a toroidal set $K$, a homological degree $d(f;K)$, etc. These turn out to be describable in terms of \v{C}ech cohomology. One needs to assume that $\check{H}^1(K;\mathbb{Z}) \neq 0$ for the definitions to make sense. Then:

\begin{itemize}
    \item[(D1)] The homological prime divisors of $K$ correspond to those prime numbers $q$ which divide every element in $\check{H}^1(K;\mathbb{Z})$ in the sense that for every $x \in \check{H}^1(K;\mathbb{Z})$ there exists $y \in \check{H}^1(K;\mathbb{Z})$ such that $x = y + \stackrel{(q)}{\ldots} + y$.
    \item[(D2)] Suppose $f$ is a local homeomorphism which leaves a toroidal set $K$ invariant. Then the induced homomorphism $f^* : \check{H}^1(K;\mathbb{Z}) \longrightarrow \check{H}^1(K;\mathbb{Z})$ turns out to be multiplication by the homological degree $d(f;K)$.
\end{itemize}

A straightforward adaptation of the proof of Theorem \ref{thm:dattract} yields:

\begin{itemize}
    \item[(D3a)] When $K$ is an attractor for $f$, the homological degree $d(f;K)$ is an integer whose prime divisors are exactly the prime divisors $q_i$ of $K$. Moreover,
    \item[(D3b)] if $T \subseteq \mathcal{A}(K)$ is a solid torus neighbourhood of $K$ and $r$ is such that $f^r(T) \subseteq T$ then the $q_i$ are precisely the prime divisors of $|m(f^r(T) \subseteq T)|$ and the homological degree $d(f;K)$ satisfies $|d(f;K)|^r = |m(f^r(T) \subseteq T)|$.
\end{itemize}

Since $|m| \leq N$ in general, (D3b) and Remark \ref{rem:manning} yield the inequality $|d(f;K)| \leq d_{\mathcal{N}}(f;K)$.

For a $\mathcal{C}^0$ embedding $f : V \longrightarrow V$ the induced map $f_* : H_1(V) \longrightarrow H_1(V)$ is just multiplication by $m(f(V) \subseteq V)$, and so a classical theorem of Manning (\cite{manning1}) implies that $h(f) \geq |m(f(V) \subseteq V)|$. This is the homological counterpart to Theorem \ref{thm:entropy}, where notably the smoothness assumption is not needed. Replacing $N$ with $m$ in the derivation of Theorem \ref{thm:manning} from Theorem \ref{thm:entropy} shows that:

\begin{itemize}
    \item[(D4)] If $K$ is a toroidal attractor for a local homeomorphism and $\check{H}^1(K;\mathbb{Z}) \neq 0$ then $h(f|_K) \geq \log |d(f;K)|$.
\end{itemize}

Combining (D3a) and (D4) one has $d(f;K) \geq \prod_i q_i$ and as a consequence the universal bound \begin{equation} \label{eq:boundc0} h(f|_K) \geq \log |d(f;K)| \geq \log \prod_i q_i\end{equation} where $f$ is now a local homeomorphism having a toroidal set $K$ as an attractor. This is structurally similar to \eqref{eq:intro1} and holds without any smoothness assumptions.

 \begin{example} Let $K \subseteq \mathbb{R}^3$ be the standard embedding of an $n$--adic solenoid in $\mathbb{R}^3$ (see for example \cite[\S 17.1]{katokhasselblatt}). This particular embedding is toroidal and its prime divisors, both geometric and homological, are just the prime divisors $p_i = q_i$ of the integer $n$ (counted just once). Thus if $f$ is any local homeomorphism having $K$ as an attractor then $h(f|_K) \geq \log \prod_i q_i$. There exists a homeomorphism of $\mathbb{R}^3$ (in fact, a $\mathcal{C}^{\infty}$ diffeomorphism) which realizes the embedded solenoid $K$ as an attractor with its standard dynamics. These have an entropy $\log \prod_i q_i$, and so in this case the universal bound $\log \prod_i q_i$ for any attracting dynamics on $K$ is in fact sharp.
 \end{example}

In spite of the previous example, the homological approach has several shortcomings. It is not applicable when $\check{H}^1(K;\mathbb{Z}) = 0$, so for instance it leaves out examples as simple as the Whitehead continua described in Example \ref{ex:whiteheadtype}. Also, Theorem \ref{thm:continuation} fails if one replaces (geometric) prime divisors with homological prime divisors. The consequence of this is that the bound $h(f|_K) \geq \log \prod_i q_i$ is not sharp even up to continuation, and so the homological degree is not powerful enough to prove Theorem \ref{thm:intro}. The following example provides an illustration of these phenomena.

\begin{example} \label{ex:comparedeg} Consider a variation of Example \ref{ex:whiteheadattractor}.(1) where $T_1$ and $T_2 = f(T_1)$ are still unknotted and the core of $f(T_1)$ looks like the pattern in the left panel of Figure \ref{fig:whitehead} but with an extra full winding inside $T_1$. The resulting toroidal set $K$ can still be realized as an attractor for a $\mathcal{C}^{\infty}$ diffeomorphism $f$ of $\mathbb{R}^3$. Now $m(T_2 \subseteq T_1) = 1$ and $N(T_2 \subseteq T_1) = 3$. Therefore:
\begin{itemize}
    \item $K$ has no homological prime divisors and $d(f;K) = 1$.
    \item $K$ has $p=3$ as a (geometric) prime divisor, and $d_{\mathcal{N}}(f;K) = 3$.
\end{itemize}

This illustrates that the inequality $|d(f;K)| \leq d_{\mathcal{N}}(f;K)$ can be strict. Since $f$ is $\mathcal{C}^{\infty}$, according to Theorem \ref{thm:manning} we have $h(f|_K) \geq \log d_{\mathcal{N}}(f;K) = \log 3$ whereas the homological bound only says $h(f|_K) \geq \log d(f;K) = \log 1 = 0$. Notice that $K$ cannot be continued to a smooth knot through toroidal attractors (because it has $3$ as a prime divisor) but the homological bound on the entropy vanishes.
\end{example}

\subsection{} It is natural to ask whether the bound $h(f|_K) \geq \log d_{\mathcal{N}}(f;K)$ is valid when $f$ is just a (local) homeomorphism, without any smoothness assumptions. The derivation of Theorem \ref{thm:manning} from Theorem \ref{thm:entropy} works equally well in the $\mathcal{C}^0$ case, so this boils down to the following:
\smallskip

{\it Question. Let $f : V \longrightarrow V$ be continuous and injective. Is it true that $h(f) \geq \log N(f(V) \subseteq V)$?}
\smallskip

A heuristic argument which might suggest an affirmative answer goes as follows. Let again $\sigma$ be the centerline of $V$. After some technical work it can be arranged that the angular behaviour of $f^n \circ \sigma$ be piecewise monotone, and then the number of monotonicity intervals is bounded below by $N(f^n \circ \sigma \subseteq V) = N(f(V) \subseteq V)^n$. A classical result of Misiurewicz and Szlenk (\cite[Theorem 1, p. 48]{misiurewiczszlenk}) concerning the entropy of self-maps of $\mathbb{S}^1$ then suggests that this angular behaviour alone should contribute $\log N(f(V) \subseteq V)$ to the entropy of $f$.

If the answer to the question posed above is affirmative then, since $d_{\mathcal{N}}(f;K) \geq \prod_i p_i$ holds generally, one would have the universal bound $h(f|_K) \geq \log \prod_i p_i$ for any local homeomorphism $f$ having $K$ as an attractor. In turn this leads to the following theorem which improves Theorem \ref{thm:intro} from the Introduction in that it gives a conclusion about $K$ itself, with no continuations involved, and makes no smoothness assumptions:
\smallskip

{\it Theorem. Let $K$ be a toroidal attractor for a homeomorphism $f$ of $\mathbb{R}^3$. Then either $K$ admits stationary attracting dynamics or every attracting dynamics on $K$ has an entropy at least $\log 2$.}
\smallskip

\begin{proof} If $K$ has a prime divisor then $h(f|_K) \geq \log 2$. If $K$ has no prime divisors, as in the proof of Theorem \ref{thm:continuation} this implies that $K$ has a basis of concentric solid tori. Using these one can construct a $\mathcal{C}^0$ flow which has $K$ as an attractor (see \cite[Theorem B, p. 71]{hecyo2}) and is stationary on it. In particular, the time-one map of such a flow provides a homeomorphism of $\mathbb{R}^3$ which has $K$ as an attractor and is stationary on it.
\end{proof}

\appendix

\section{} \label{app:index}

In this Appendix we relate our topological definition of the geometric index to the standard piecewise linear (pl) one and show how the properties (P2) and (P3) from p. \pageref{pg:properties} can be derived from their pl counterparts. Since we will be working simultaneously with the topological and the piecewise linear geometric indices, we shall distinguish them notationally with a subscript thus: $N_{\rm top}$ and $N_{\rm pl}$.

We recall the pl definition. Let $T \subseteq \mathbb{R}^3$ be a polyhedral solid torus and $\gamma \subseteq {\rm int}\ T$ a polygonal simple closed curve. We consider all possible polyhedral meridional disks $D$ of $T$ (which may be very contorted) and count the number of points of intersection in $D \cap \gamma$. The geometric index of $\gamma$ inside $T$ is defined as \[N_{\rm pl}(\gamma \subseteq T) := \min \{|D \cap \gamma| : D \text{ is a polyhedral meridional disk of } T\}.\]

It should be intuitively clear that this number is finite: since the objects involved are polyhedral, a slight perturbation of any meridional disk $D$ will always make it transverse to $\gamma$ and then $D \cap \gamma$ will consist of finitely many points.

Now suppose $T_1 \subseteq T_0$ is a pair of polyhedral solid tori with $T_1$ contained in the interior of $T_0$. Let $\gamma_1$ be a polyhedral core of $T_1$. Then one defines the geometric index of $T_1$ inside $T_0$ as $N_{\rm pl}(T_1 \subseteq T_0) := N_{\rm pl}(\gamma_1 \subseteq T_0)$. This definition is correct because cores are unique up to isotopy. An equivalent definition which does not make use of cores and is more directly related to our definition of $N_{\rm top}$ is the following. Consider a meridional disk $D$ of $T_0$. By perturbing it slightly if necessary we may achieve that it intersects $\partial T_1$ along disjoint simple closed curves $\gamma_i$ which bound meridional disks $E_i$ of $T_1$. Then $N_{\rm pl}(T_1 \subseteq T_0)$ is the smallest possible number of these curves. The equivalence of this definition and the previous one is a consequence of \cite[Hilfssatz 5, p. 174]{schubert}. 

For a polyhedral pair of tori $T_1 \subseteq T_0$ we now have two definitions of a geometric index. The following proposition shows that both are equivalent.

\begin{proposition} Suppose $T_1 \subseteq T_0$ is a pair of polyhedral solid tori. Then $N_{\rm top}(T_1 \subseteq T_0) = N_{\rm pl}(T_1 \subseteq T_0)$.
\end{proposition}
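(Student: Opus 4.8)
The plan is to prove the equality through the two inequalities $N_{\rm top}(T_1\subseteq T_0)\le N_{\rm pl}(T_1\subseteq T_0)$ and $N_{\rm pl}(T_1\subseteq T_0)\le N_{\rm top}(T_1\subseteq T_0)$. For the first, I would start from a polyhedral meridional disk $D$ of $T_0$ realizing $N_{\rm pl}$ in the ``curve--bounding'' form provided by \cite[Hilfssatz 5, p. 174]{schubert}: $D\cap\partial T_1$ consists of $N_{\rm pl}$ disjoint simple closed curves, each a meridian of $T_1$ bounding a meridional disk $E_i$ of $T_1$, and the $E_i$ can be chosen pairwise disjoint. A standard cut--and--paste (replace $D\cap{\rm int}\ T_1$ by $\bigsqcup_i E_i$ and discard the resulting $2$--spheres, each of which bounds a ball in $\mathbb R^3$) produces a polyhedral meridional disk $D'$ of $T_0$ whose intersection with $T_1$ is at most $N_{\rm pl}$ meridional disks of $T_1$; piecewise linear general position of the surface $D'$ against the solid torus $T_1$ supplies exactly the local product model required before Definition \ref{defn:Ntop}. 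Hence $D'$ is an admissible competitor for $N_{\rm top}$ and $N_{\rm top}\le N_{\rm pl}$; in particular $N_{\rm top}(T_1\subseteq T_0)$ is finite, hence well defined, for a polyhedral pair, which is needed before the other direction can even be phrased.

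For the reverse (and harder) inequality I would start from a topological meridional disk $D$ of $T_0$ realizing $N_{\rm top}$, so $D\cap T_1$ is a union of $r=N_{\rm top}$ meridional disks $E_1,\dots,E_r$ of $T_1$, and I would keep the neighbourhood $N$ of $D$ and the homeomorphism $h:(N,N\cap T_1)\to(\mathbb D^2,D_1\sqcup\dots\sqcup D_r)\times[-1,1]$ coming from the transversality model. Fix a polyhedral core $c$ of $T_1$ meeting each $E_i$ transversally in exactly one point; this is possible because $E_1,\dots,E_r$ cut $T_1$ into $r$ balls, so one may take a topological core with that property and push it slightly to a polyhedral one, the transversality model guaranteeing that a sufficiently small push preserves the intersection pattern. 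Then $|D\cap c|=r$. Now approximate $D$ by a polyhedral meridional disk $D'$ of $T_0$ with the same polyhedral boundary, $\varepsilon$--close to $D$ — using a piecewise linear approximation theorem for disks, the analogue of the one for solid tori in \cite[Theorem 2, p. 251]{moise2} — so that for $\varepsilon$ small $D'\subseteq N$ and $D'\cap c$ is concentrated in small balls around the $r$ points of $D\cap c$. Inside each such ball the coordinates $h$ display $c$ as a vertical segment and $D$ as the central horizontal level, so finitely many small piecewise linear finger moves push $D'$ past the spurious intersection points and leave $|D'\cap c|=r$. Since $N_{\rm pl}(T_1\subseteq T_0)=N_{\rm pl}(c\subseteq T_0)\le|D'\cap c|=r=N_{\rm top}$, this yields $N_{\rm pl}\le N_{\rm top}$ and completes the proof.

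The step I expect to be the main obstacle is precisely this controlled polyhedral approximation. The product neighbourhood supplied by transversality is the essential tool, but the homeomorphism $h$ is merely topological, so one cannot straighten $D$ to be polyhedral ``inside the product''; instead $D$ must be approximated in the ambient $\mathbb R^3$ and the product coordinates are used only to organize the local piecewise linear corrections that delete the excess intersection with $T_1$. Verifying rigorously that these corrections can be performed within arbitrarily small neighbourhoods of the $E_i$, keep $D'$ an embedded meridional disk of $T_0$, and terminate is the bulk of the technical work — the kind of three--dimensional ``woodworking'' alluded to when properties (P2) and (P3) were stated.
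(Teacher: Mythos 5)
Your proof attempt takes a genuinely different route for the harder inequality $N_{\rm pl}\le N_{\rm top}$, and it has a gap there.

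For $N_{\rm top}\le N_{\rm pl}$ you and the paper are essentially aligned: Schubert's Hilfssatz 5 already supplies a pl meridional disk of $T_0$ meeting $T_1$ in exactly $N_{\rm pl}$ meridional disks of $T_1$, and the paper simply perturbs its vertices to get the product neighbourhood of Definition~\ref{defn:Ntop}. Your additional cut--and--paste is not wrong, just more than is needed.

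For $N_{\rm pl}\le N_{\rm top}$ the strategies diverge. You approximate the whole topological meridional disk $D$ by a pl disk $D'$ and then count intersections of $D'$ with a pl core $c$ of $T_1$, using $N_{\rm pl}(T_1\subseteq T_0)=N_{\rm pl}(c\subseteq T_0)\le|D'\cap c|$. The paper instead never approximates $D$ itself: it passes to $D^\ast:=D\cap M$, $M=T_0\setminus\operatorname{int}T_1$, observes that the transversality model makes $D^\ast$ semi-locally tame as a disk-with-holes in the $3$--manifold $M$, and applies Bing's tameness theorem to obtain an ambient $\varepsilon$--homeomorphism of $M$ carrying $D^\ast$ to a pl disk-with-holes; it then caps the small boundary curves with pl meridional disks of $T_1$. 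Two points where your version is under-justified and the paper's route neatly sidesteps them: (1) the approximation theorem you cite (Moise, for embeddings of solid tori) does not produce a pl disk whose boundary remains in $\partial T_0$ --- and $\partial D$ is not pl to begin with, so ``same polyhedral boundary'' is not available; you would need a boundary-relative pl approximation theorem for properly embedded surfaces in a $3$--manifold with boundary, which is exactly the kind of statement the paper replaces by Bing's theorem applied inside $M$ (where $h$ automatically preserves $\partial T_0$ and $\partial T_1$ and, by smallness, keeps the boundary curves as meridians). (2) The ``finger moves to push $D'$ past the spurious intersection points'' step is vague: finger moves create, rather than cancel, intersections, and what you actually need is a controlled isotopy of $D'$ cancelling pairs of excess intersections with $c$ while keeping $D'$ an embedded pl meridional disk; this is plausible in a $3$--manifold but is precisely the technical ``woodworking'' you acknowledge, and it is not a consequence of the product model alone because $h$ is only a homeomorphism. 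If you want to salvage your approach, the cleanest fix is to imitate the paper: restrict attention to $D^\ast$ in $M$, invoke Bing's theorem for semi-locally tame $2$--complexes in $3$--manifolds, and cap off --- then the core $c$ becomes unnecessary.
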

\begin{proof} Let $p$ and $t$ be the polyhedral and topological geometric indices of $T_1$ inside $T_0$, respectively. As mentioned above, there is a pl meridional disk of $T_0$ which intersects $T_1$ in precisely $p$ meridional disks. This can clearly be made transverse to $T_1$ by slightly perturbing its vertices if necessary; hence $t \leq p$.

To prove the converse we shall show that any topological meridional disk $D$ of $T_0$ which is transverse to $T_1$ in the sense of Definition \ref{defn:Ntop} can be used to construct a polyhedral disk which is still transverse to $T_1$ and intersects it in the same number of meridional disks, thus showing that $p \leq t$.

Since $T_1$ is polyhedral, the set $M := T_0 \backslash {\rm int}\ T_1$ is a compact $3$--manifold whose boundary is the disjoint union of $\partial T_0$ and $\partial T_1$. Consider $D^* := D \cap M$, which is a disk with holes. The boundaries of these holes are simple closed curves $\gamma_1, \ldots, \gamma_k$ in $\partial T_1$ which are meridional curves of $T_1$.

By the topological transversality condition in Definition \ref{defn:Ntop} the set $D^*$ is semi-locally tame and hence tame (\cite[Theorem 9, p. 157]{bing2}). Thus there exists a homeomorphism $h$ of $M$ onto itself which moves points less than any prescribed $\epsilon$ and sends $D^*$ to a polyhedral disk with holes. Due to the invariance of the boundary of a manifold under homeomorphisms, $h$ sends $\partial T_0$ and $\partial T_1$ to themselves. By choosing $\epsilon$ sufficiently small, each $h(\gamma_i)$ is homotopic to $\gamma_i$ in $\partial T_1$; in particular each is still a meridional curve (non-nullhomotopic in $\partial T_1$ but nullhomotopic in $T_1$). Thus each $h(\gamma_i)$ bounds a polyhedral meridional disk $E_i$ of $T_1$; these can clearly be taken to be disjoint by removing their possible intersections.  Then $h(D^*) \cup E_1 \cup \ldots \cup E_k$ is a polyhedral meridional disk of $T_0$ which intersects $T_1$ in $k$ disks.
\end{proof}

The analogues of properties (P1) to (P3) in p. \pageref{pg:properties} were established, in the pl context, by Schubert. The above proposition then implies that they also hold in the topological context. We shall argue this for the multiplicativity property, for example.

Consider three nested solid tori $T_2 \subseteq T_1 \subseteq T_0$, where as usual we take the tori to be tame. Pick any homeomorphism $h_0$ from $T_0$ onto any polyhedral solid torus $P_0$. This sends $h_0(T_1)$ onto a topological solid torus inside $T_0$ which is semilocally tame (it being the image under a local homeomorphism of a tame object). Thus it is tame; in fact there is a homeomorphism $h_1 : P_0 \longrightarrow P_0$ which is the identity outside a neighbourhood of $h_0(T_1)$ and sends the latter onto a polyhedral solid torus $P_1$. Thus the homeomorphism $h_1 \circ h_0$ sends both $T_0$ and $T_1$ onto polyhedral tori. Applying the same argument once more we obtain a homeomorphism $h$ which sends the triple $(T_0,T_1,T_2)$ onto a triple of polyhedral solid tori $(P_0,P_1,P_2)$. By the invariance of $N_{\rm top}$ under homeomorphisms and the proposition above $N_{\rm top}(T_j \subseteq T_i) = N_{\rm top}(P_j \subseteq P_i) = N_{\rm pl}(P_j \subseteq P_i)$. Then the pl multiplicativity property $N_{\rm pl}(P_2 \subseteq P_0) = N_{\rm pl}(P_2 \subseteq P_1) \cdot N_{\rm pl}(P_1 \subseteq P_0)$ implies directly its topological counterpart $N_{\rm top}(T_2 \subseteq T_0) = N_{\rm top}(T_2 \subseteq T_1) \cdot N_{\rm top}(T_1 \subseteq T_0)$.

\section{Yomdin's inequality} \label{app:yomdin}

In this Appendix we prove Yomdin's inequality \eqref{eq:yomdin} for our very specific case of a map $f : V \longrightarrow V$ and exponential growth rates of lengths of curves.  We have followed the papers by Yomdin \cite{yomdin} and Gromov \cite{gromov}. Although we will eventually take $f$ to be $\mathcal{C}^{\infty}$, for the proof we work with a fixed degree of smoothness $k$. Define $M(f)$ to be the maximum of $\|df\|_{\infty}$ over $V$, and $R(f)$ as the limit
\begin{equation*}
    R(f):=\lim_{n\rightarrow\infty}\frac{1}{n}\log \max_{x\in V}\|d f^n\|_{\infty},
\end{equation*}
or in other words the exponential growth rate of $M(f^n)$. Notice that $M(f^{n+m}) \leq M(f^n) M(f^m)$ so the sequence $\log M(f^n)$ is subadditive. Thus $R(f)$ exists and satisfies $R(f) \leq M(f) < +\infty$.

For the sake of brevity we shall denote by ${\rm EGR}\ a_n$ the exponential growth rate of any sequence $a_n \geq 0$; i.e. $\overline{\lim}_{n\rightarrow \infty}\frac{1}{n}\log a_n$. Now, the topological entropy of $f$ is bounded as follows:
\begin{theorem}\label{thm:yomdin}
    Let $f : V \longrightarrow V$ be of class $\mathcal{C}^k$. For any $\mathcal{C}^k$ curve $\sigma : [0,1] \longrightarrow V$ the following inequality holds:
    \begin{equation} \label{eq:yomdin0}
        {\rm EGR}\ \ell(f^n \circ \sigma) \leq h(f) + \frac{1}{k} R(f).
    \end{equation}
\end{theorem}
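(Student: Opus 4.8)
The plan is to follow Yomdin's method: reparametrize the orbit segment $\sigma, f\circ\sigma,\ldots,f^n\circ\sigma$ so that, at a fixed small scale $\epsilon$, every piece becomes $\mathcal{C}^k$-small; bound the number of pieces by combining the covering-by-dynamical-balls description of the entropy with a correction controlled by the growth of the derivatives of $f^n$; then turn the piece count into a bound on $\ell(f^n\circ\sigma)$ and pass to exponential growth rates.

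First I would record the two quantitative facts that feed the final estimate. Since the minimal number $S(f,n,\epsilon)$ of $(n,\epsilon)$-dynamical balls needed to cover $V$ is non-increasing in $\epsilon$, and $h(f)=\lim_{\epsilon\to 0}\limsup_n\frac{1}{n}\log S(f,n,\epsilon)=\sup_{\epsilon>0}\limsup_n\frac{1}{n}\log S(f,n,\epsilon)$, we have $\limsup_n\frac{1}{n}\log S(f,n,\epsilon)\le h(f)$ for \emph{every} $\epsilon>0$, so at the end no further limit in $\epsilon$ will be needed. And since $\log M(f^n)$ is subadditive, for every $\eta>0$ there is a constant $C_\eta$ with $M(f^n)\le C_\eta\, e^{n(R(f)+\eta)}$ for all $n$, whence ${\rm EGR}\ M(f^n)^{1/k}\le\frac{1}{k}R(f)$.

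The engine is the Yomdin--Gromov reparametrization (``algebraic'') lemma in its one-dimensional $\mathcal{C}^k$ form. After rescaling the parameter (using $\|\sigma\|_{\mathcal{C}^k}<\infty$) we may start from an affine subdivision of $[0,1]$ on each piece of which $\sigma$, affinely reparametrized to $[0,1]$, has all derivatives up to order $k$ bounded by $\epsilon$. I would then pass from step $j$ to step $j+1$ by applying $f$ and invoking the reparametrization lemma to restore that $\mathcal{C}^k$-smallness at scale $\epsilon$ on each piece, doing the bookkeeping so as to compare the number of pieces produced along the orbit both to the number of $(n,\epsilon)$-dynamical balls of $V$ and to the growth of the derivatives of $f^n$. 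The outcome should be that the number of final pieces is at most $C(k)\cdot S(f,n,c\epsilon)\cdot M(f^n)^{1/k}$, times a factor subexponential in $n$ and a constant independent of $n$: the factor $S(f,n,c\epsilon)$ records how the orbit spreads among dynamical balls, while the $M(f^n)^{1/k}$ factor is precisely the extra refinement needed to keep the $k$-th order data under control as the derivatives of the iterates grow. This is where $\mathcal{C}^k$-smoothness is genuinely used, and it is the step I expect to be the main obstacle: it is Yomdin's quantitative Sard/algebraic lemma, together with the verification that the correction beyond the entropy count is only $M(f^n)^{1/k}$ rather than something growing like $\|df\|_\infty^{\,n}$; its general proof is long but, as noted in the paper, considerably simpler in this one-dimensional situation.

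Granting the count, each final piece $J$ satisfies $\|((f^n\circ\sigma)|_J)'\|_\infty\le\epsilon$ after affine reparametrization, so $\ell((f^n\circ\sigma)|_J)\le\epsilon$ and hence $\ell(f^n\circ\sigma)\le\epsilon\cdot\#\{\text{pieces}\}\le C(k)\,\epsilon\,S(f,n,c\epsilon)\,M(f^n)^{1/k}\cdot(\text{subexponential in }n)$. Taking $\overline{\lim}_n\frac{1}{n}\log(\,\cdot\,)$, the constants and the $\epsilon$-factor disappear, the subexponential factor contributes $0$, and the two facts of the second paragraph give ${\rm EGR}\ \ell(f^n\circ\sigma)\le\limsup_n\frac{1}{n}\log S(f,n,c\epsilon)+{\rm EGR}\ M(f^n)^{1/k}\le h(f)+\frac{1}{k}R(f)$, which is \eqref{eq:yomdin0}.
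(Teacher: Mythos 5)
Your proposal follows the same broad blueprint as the paper (cover $V$ by $(n,\epsilon)$-dynamical balls, use a Yomdin--Gromov one-dimensional reparametrization lemma to decompose $\sigma$ iteratively into $\mathcal{C}^k$-normalized pieces, then convert the piece count into a length bound), but it has a genuine gap at the central counting step. You assert that the number of final pieces is at most $C(k)\cdot S(f,n,c\epsilon)\cdot M(f^n)^{1/k}$ ``times a factor subexponential in~$n$''. The one-step reparametrization lemma gives, for a single application of $f$ to a normalized piece, on the order of $\mu(k)\,M(f)^{1/k}$ new normalized subpieces; iterating $n$ times therefore produces on the order of $\bigl(\mu(k)\,M(f)^{1/k}\bigr)^n = \mu(k)^n\,M(f)^{n/k}$ pieces per dynamical ball. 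The factor $\mu(k)^n$ is exponential in~$n$, not subexponential, and it contributes $\log\mu(k)$ to the exponential growth rate; moreover the iterative argument naturally yields $M(f)^{n/k}$ rather than the sharper $M(f^n)^{1/k}$. Neither of these is cosmetic: without removing them one only obtains the weaker inequality
\begin{equation*}
    {\rm EGR}\ \ell(f^n\circ\sigma)\ \leq\ h(f) + \log\mu(k) + \tfrac{1}{k}\log M(f),
\end{equation*}
which is what the paper calls \eqref{eq:yomdin_weak}.

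The missing ingredient is the time-rescaling step. Applying the weaker inequality to $f^q$ and the curve $f^r\circ\sigma$ (after writing $n=mq+r$), dividing by~$q$, using $h(f^q)=q\,h(f)$, and letting $q\to\infty$ kills the additive constant $\log\mu(k)$ and turns $\tfrac{1}{k}\log M(f)$ into $\tfrac{1}{k}R(f)$, since $\tfrac{1}{q}\log M(f^q)\to R(f)$. This reduction of \eqref{eq:yomdin0} to \eqref{eq:yomdin_weak} is precisely where the passage from $M(f)$ to $R(f)$ and the disappearance of $\mu(k)^n$ happen, and it is not achievable by sharpening the reparametrization count alone. You should prove the weaker bound first and then invoke the rescaling argument; as written, your claim that the count is controlled by $M(f^n)^{1/k}$ up to subexponential error silently assumes the conclusion of exactly this step.
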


Notice that $R(f)$ is finite and does not depend on $k$. Hence when $f$ and $\sigma$ are of class $\mathcal{C}^{\infty}$ then $k$ can be taken to be arbitrarily large and so the $R(f)/k$ term can be removed, leading to inequality \eqref{eq:yomdin}.

\begin{remark} Inequality \eqref{eq:yomdin0} actually holds for arbitrary compact manifolds and simplices of arbitrary dimension instead of paths, replacing length with the appropriate volume. This is the celebrated theorem of Yomdin. Combining this with an inequality of Newhouse (\cite{newhouse}) shows that when $f$ is $\mathcal{C}^{\infty}$ its entropy $h(f)$ is in fact equal to the supremum of the left hand side of \eqref{eq:yomdin} over all smooth simplices of arbitrary dimensions.
\end{remark}

Examination of \eqref{eq:yomdin0} under time rescaling (i.e. replacing $f$ with $f^q$ for some fixed $q \geq 1$) leads to the observation that it is enough to prove the following a priori slightly weaker bound: \begin{equation} \label{eq:yomdin_weak} {\rm EGR}\ \ell(f^n \circ \sigma) \leq h(f) + A(k) + \frac{1}{k}M(f)\end{equation} where $A(k)$ is some number which only depends on $k$ but not on $f$ or $\sigma$. This implies inequality \eqref{eq:yomdin0} as follows. Fix the path $\sigma : [0,1] \longrightarrow V$ and pick some sequence $n_k \rightarrow +\infty$ such that $1/n_k \log \ell(f^{n_k} \circ \sigma)$ converges to the exponential growth rate ${\rm EGR}\ \ell(f^n \circ \sigma)$. Fix some $q \geq 1$ and write $n_k = m_kq + r_k$ with $0 \leq r_k < q$. After passing to a subsequence we may take all $r_k$ to be equal to some fixed $r$. Trivially \[\frac{1}{n_k} \log \ell(f^{n_k} \circ \sigma) = \frac{m_k}{n_k} \frac{1}{m_k} \log \ell((f^q)^{m_k} \circ (f^r \circ \sigma))\] and taking limits as $k \rightarrow +\infty$ yields \[{\rm EGR}\ \ell(f^n \circ \sigma) \leq \frac{1}{q} {\rm EGR}\ \ell((f^q)^m \circ (f^r \circ \sigma)),\] where the exponential growth rate on the right hand side is taken as $m \rightarrow +\infty$. Now the weaker inequality \eqref{eq:yomdin_weak} applied to the map $f^q$ and the path $f^r \circ \sigma$ on the right hand side yields \[{\rm EGR}\ \ell(f^n \circ \sigma) \leq \frac{1}{q} \left( h(f^q) + A(k) + \frac{1}{k} \log M(f^q) \right).\] A standard property of entropy ensures that $h(f^q) = qh(f)$. Plugging this above and letting $q \rightarrow +\infty$ removes the constant term $A(k)$ and leads to \eqref{eq:yomdin} by the definition of $R(f)$.

\subsection{Basic strategy of the proof} Ultimately, the entropy is related to the minimum number of dynamical balls needed to cover $V$. One possible way of estimating this from below is the following.

Fix some curve $\sigma : [0,1] \longrightarrow V$ and any open covering $\mathcal{B}$. For each member $B$ of the covering we want to analyze the intersection of (the image of) $\sigma$ with $B$, which will generally be a union of curves, and how the length of these grows as we iterate them with $f$. Thus we consider the set $\sigma^{-1}(B) \subseteq [0,1]$ which is a disjoint union of compact intervals (its connected components), and the length of $f^n \circ \sigma|_{\sigma^{-1}(B)}$ for each $B \in \mathcal{B}$. Obviously as $B$ ranges over the covering $\mathcal{B}$ the sets $\sigma^{-1}(B)$ provide a covering of $[0,1]$, and so $\ell(f^n \circ \sigma) \leq \sum_{B \in \mathcal{B}} \ell(f^n \circ \sigma|_{\sigma^{-1}(B)})$. In particular when $\mathcal{B}$ is a subcover of $\mathcal{B}(n,\epsilon)$ with the minimal number of elements, namely $S(n,\epsilon)$, we have \[\ell(f^n \circ \sigma) \leq S(n,\epsilon) \sup_{B \in \mathcal{B}(n,\epsilon)} \ell(f^n \circ \sigma|_{\sigma^{-1}(B)})\] because the number of summands is $S(n,\epsilon)$ and every summand can be overestimated by the supremum of the lengths over the whole cover $\mathcal{B}(n,\epsilon)$ instead of only the minimal subcover. Taking exponential growth rates on both sides and then letting $\epsilon \rightarrow 0$ leads to \begin{equation} \label{eq:app_basic1} {\rm EGR}\ \ell(f^n \circ \sigma) \leq h(f) + \lim_{\epsilon \rightarrow 0} {\rm EGR}\ \sup_{B \in \mathcal{B}(n,\epsilon)} \ell(f^n \circ \sigma|_{\sigma^{-1}(B)}).\end{equation} Comparing this with \eqref{eq:yomdin_weak} we see that one needs to show that the second summand is bounded above by an expression of the form $A(k) + M(f)/k$. The key to doing this is an analysis of the set $\sigma^{-1}(B)$. This is provided by Lemma \ref{lem:fund1} below. We introduce a preliminary definition and then state the lemma.

If $\tau : [0,1] \longrightarrow V$ is a curve such that all the derivatives of $\tau$ up to order $k$ have a norm $\leq 1$ we shall say that $\tau$ is normalized. If $\tau$ is defined only on a subinterval $J \subseteq [0,1]$ we reparameterize it by letting $\psi : [0,1] \longrightarrow J$ be the unique affine bijection which preserves orientation, and say that $\tau$ is normalized if $\tau \circ \psi$ is normalized. Clearly the length of a normalized curve is bounded above by $1$.

\begin{lemma} \label{lem:fund1} Let $f : V \longrightarrow V$ be a $\mathcal{C}^k$ map. There exist numbers $\mu(k)$ and $\epsilon_0(f,k)$ with the following property. For any $\mathcal{C}^k$ curve $\sigma : [0,1] \longrightarrow V$ and any $(n,\epsilon)$-dynamical ball $B \subseteq V$ with $\epsilon < \epsilon_0$ the set $\sigma^{-1}(B)$ can be covered by at most $c(\sigma,\epsilon) (\mu(k) M(f)^{\nicefrac{1}{k}})^n$ intervals $J_i$ such that for each of these the curve $f^n \circ \sigma|_{J_i}$ is normalized.
\end{lemma}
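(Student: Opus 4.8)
\emph{Strategy.} I would prove Lemma~\ref{lem:fund1} by induction on $n$, the inductive step being a single-step reparametrization statement, and the whole argument hinges on rescaling everything to unit size by means of the dynamical ball (this is Yomdin's reparametrization lemma in the present very special case). Fix $B=B_f(x,\epsilon,n)$; by definition $f^m(B)\subseteq B(x,\epsilon)$ for $0\le m\le n$. Set $\tilde f(z):=\tfrac1\epsilon\bigl(f(x+\epsilon z)-x\bigr)$, a map which on the appropriate subdomain carries the unit ball $\mathbb B\subseteq\mathbb R^3$ into itself. The crucial observation is that $d\tilde f(z)=df(x+\epsilon z)$, so $\|d\tilde f\|_\infty\le M(f)$, while $d^{\,l}\tilde f=\epsilon^{\,l-1}d^{\,l}f(x+\epsilon\,\cdot)$ for $l\ge 2$, so $\|d^{\,l}\tilde f\|_\infty\le\epsilon\,\|f\|_{\mathcal{C}^k}$; hence, once $\epsilon<\epsilon_0(f,k)$ is so small that $\epsilon\,\|f\|_{\mathcal{C}^k}$ times the pertinent Fa\`a di Bruno constants is $\le M(f)$, \emph{only the first-order quantity $M(f)$ survives} and the higher derivatives of $f$ are rendered negligible. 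For an interval $J\subseteq[0,1]$ with $f^{m}\circ\sigma(J)\subseteq B(x,\epsilon)$ write $\widehat g_{J,m}:=\tfrac1\epsilon\bigl(f^{m}\circ\sigma|_J-x\bigr):[0,1]\to\mathbb B$, affinely reparametrized; then $\widehat g_{J,m+1}=\tilde f\circ\widehat g_{J,m}$.

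\emph{Base case.} Since $\sigma$ is a fixed $\mathcal{C}^k$ curve on $[0,1]$, its derivatives up to order $k$ are bounded by some $C_\sigma$; dividing $[0,1]$ into $O(C_\sigma/\epsilon)$ equal pieces and intersecting each with $\sigma^{-1}(B(x,\epsilon))$ produces a family of at most $c(\sigma,\epsilon)$ intervals covering $\sigma^{-1}(B)$, each contained in $\sigma^{-1}(B(x,\epsilon))$ and short enough that $\widehat g_{J,0}$ is normalized.

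\emph{Inductive step.} Assume $\sigma^{-1}(B)$ is covered by $\le c(\sigma,\epsilon)\,\bigl(\mu(k)M(f)^{1/k}\bigr)^m$ intervals $J$, each with $f^m\circ\sigma(J)\subseteq B(x,\epsilon)$ and $\widehat g_{J,m}$ normalized. Fix such a $J$ and put $u:=\widehat g_{J,m+1}=\tilde f\circ\widehat g_{J,m}$. Since $\widehat g_{J,m}$ is normalized, the chain rule and the bounds above on $\tilde f$ give $\|u^{(j)}\|_\infty\le 2M(f)$ for $1\le j\le k$ (the one-block term contributes $\le M(f)$, and all the remaining terms are swallowed by $M(f)$ by choice of $\epsilon_0$). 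Restricting to the part of $J$ lying in $(f^{m+1}\circ\sigma)^{-1}(B(x,\epsilon))\supseteq\sigma^{-1}(B)$ we have moreover $\|u\|_{C^0}\le 2$. Now apply the Landau--Kolmogorov interpolation inequality in its scale-covariant form: on a subinterval $I$ of length $\delta$,
\[
\delta^{\,j}\,\|u^{(j)}\|_{\infty,I}\;\le\;C_{j,k}\bigl(\|u\|_{\infty,I}+\delta^{k}\,\|u^{(k)}\|_{\infty,I}\bigr)\;\le\;C_{j,k}\bigl(2+2\,\delta^{k}M(f)\bigr).
\]
Choosing $\delta\sim M(f)^{-1/k}$ makes the right-hand side $\le\mu(k)$ for a constant depending only on $k$, so that $u|_I$ reparametrized is $\mu(k)$-normalized and therefore splits into $O_k(1)$ genuinely normalized pieces; and $[0,1]$ is covered by $O\bigl(M(f)^{1/k}\bigr)$ such $I$. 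Thus $J$ contributes $\le\mu(k)M(f)^{1/k}$ normalized pieces (after enlarging $\mu(k)$), each again of the form $\widehat g_{J',m+1}$, and the induction closes, giving the asserted bound at $m=n$; feeding it into \eqref{eq:app_basic1} finishes Yomdin's inequality.

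\emph{Main obstacle.} The heart of the matter --- and the place where the exponent $1/k$ is genuinely produced --- is the interpolation step: it is the $C^0$-confinement inherited from the dynamical ball that lets $\delta^{k}M(f)$ (from the top derivative) rather than the na\"ive $\delta\,M(f)$ (from the first derivative) be the binding constraint on $\delta$. Arranging the constants so that $\mu(k)$ depends on $k$ alone and $\epsilon_0$ only on $(f,k)$ is the delicate bookkeeping. A secondary technical nuisance is that at each stage one must restrict to the preimage $(f^{m+1}\circ\sigma)^{-1}(B(x,\epsilon))$, which a priori may have several components; keeping this count from spoiling the exponential rate requires some care --- it is harmless when $\sigma$ is real analytic, in particular for the round circle to which the lemma is applied in the main text, and in general it is absorbed into the $n$-independent factor $c(\sigma,\epsilon)$, which disappears since in \eqref{eq:app_basic1} the exponential growth rate in $n$ is taken before letting $\epsilon\to0$.
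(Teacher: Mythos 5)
Your high-level architecture parallels the paper's: rescale by $1/\epsilon$ to make the dynamical ball have unit radius and render the higher derivatives of $f$ negligible relative to $M(f)$; prove a one-step reparametrization estimate and iterate; and a subdivision scale $\delta\sim M(f)^{-1/k}$ coming from the balance between the $k$th derivative (size $\sim M(f)$) and the $C^0$ confinement in the ball. All of this is in the paper's reduction to its Lemma~\ref{lem:noepsilon}.

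However, the proposal has a genuine gap exactly at the point you label a ``secondary technical nuisance.'' Controlling the number of connected components of $\{t : f^{m+1}\circ\sigma(t)\in B(x,\epsilon)\}$ is not a side issue that can be absorbed into an $n$-independent constant $c(\sigma,\epsilon)$: the function $f^m\circ\sigma$ changes at every step $m$, so its sublevel sets can have a number of components that grows with $m$ (and hence with $n$), and nothing in your argument bounds that growth. Real analyticity of $\sigma$ does not help either, since $f$ is only $\mathcal C^k$ and the complexity of $f^m\circ\sigma$ is still unbounded a priori. This is precisely why Yomdin's proof, and the paper's Appendix~\ref{app:yomdin}, replace $f\circ\sigma$ on each small interval by its degree-$k$ Taylor polynomial $P$: for a \emph{polynomial} curve the set $\{t : P(t)\in B'\}$ is semialgebraic of degree controlled by $k$, hence has at most $\alpha_1(k)$ components (the paper's step~(1) in Lemma~\ref{lem:poly}), and Markov's inequality then bounds all derivatives of $P$ on each component in terms of $\|P\|_\infty$ alone (step~(2)). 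These two facts together do the work that your Landau--Kolmogorov interpolation is meant to do, but with the component count under control.

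Relatedly, the interpolation step as written does not quite close: the $C^0$ bound $\|u\|_\infty\le 2$ holds only on the preimage set, not on a generic subinterval $I$ of length $\delta$. Over such an $I$ the first derivative bound $\|u'\|_\infty\le 2M(f)$ only gives $\|u\|_{\infty,I}\lesssim 2+M(f)\delta\sim M(f)^{1-1/k}$, which is large, so the Landau--Kolmogorov right-hand side is not $O_k(1)$. To apply the interpolation inequality you must restrict to subintervals of $I$ that lie entirely inside the preimage, which brings the component-count problem back to the fore. In short, the interpolation inequality cannot substitute for the Taylor-approximation-plus-semialgebraic-complexity argument; the latter is what actually produces a $k$-dependent bound on the number of pieces at each step.
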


The notation $c(\sigma,\epsilon)$ means that $c$ depends only on $\sigma$ and $\epsilon$ but not on $f$ or $B$. It follows immediately from the lemma that for $\epsilon < \epsilon_0(f,k)$ \[\ell(f^n \circ \sigma|_{\sigma^{-1}(B)}) \leq \sum_i \ell(f^n \circ \sigma|_{J_i}) \leq c(\sigma,\epsilon) (\mu(k) M(f)^{\nicefrac{1}{k}})^n\] since each piece $f^n \circ \sigma|_{J_i}$ is normalized. The right hand side of the above inequality is independent of $B$, and so the supremum of these lengths over $B \in \mathcal{B}(n,\epsilon)$ has the same upper bound. Therefore extracting the exponential growth rate one has \[ {\rm EGR}\ \sup_{B \in \mathcal{B}(n,\epsilon)} \ell(f^n \circ \sigma|_{\sigma^{-1}(B)}) \leq \log \mu(k) + \frac{1}{k} \log M(f) \] whenever $\epsilon < \epsilon_0(f,k)$. In particular the inequality holds in the limit $\epsilon \rightarrow 0$, so coupled with \eqref{eq:app_basic1} we get \[ {\rm EGR}\ \ell(f^n \circ \sigma) \leq h(f) + \log \mu(k) + \frac{1}{k}\log M(f) \] which is valid for any $\sigma : [0,1] \longrightarrow V$. With $A(k) = \log \mu(k)$ this has the desired form \eqref{eq:yomdin_weak} and therefore proves \eqref{eq:yomdin0} via time rescaling as explained above.

\subsection{The inductive step for Lemma \ref{lem:fund1}} Suppose $t \in [0,1]$ belongs to $\sigma^{-1}(B)$ where $B$ is an $(n,\epsilon)$-dynamical ball. By definition, there is some $x$ such that $\sigma(t) \in B(x,\epsilon)$, $f \circ \sigma(t) \in B(f(x),\epsilon)$, etc. up to $f^n \circ \sigma(t) \in B(f^n(x),\epsilon)$. Thus finding the set $\sigma^{-1}(B)$ can be approached iteratively as follows. For each $r$ let $S_r \subseteq [0,1]$ be the set of parameters $t$ such that $\sigma(t) \in B(x,\epsilon), f \circ \sigma(t) \in B(f(x),\epsilon)\ldots$, up to iterate $r$. Clearly the $S_r$ form a decreasing sequence of compact sets and $\sigma^{-1}(B) = S_n$. Suppose we have already found some $S_r$. To find $S_{r+1}$ one may proceed as follows. Each $S_r$ is a disjoint union of closed intervals (its connected components; maybe some degenerate). If we write $S_r = \bigcup J$ for that decomposition, one can describe $S_{r+1}$ by restricting attention to each $(f^r \circ \sigma)|_J$ at a time and finding what parameter values $t \in J$ satisfy the additional condition $f^{r+1} \circ \sigma(t) \in B(f^{r+1}(x),\epsilon)$. In other words, letting $\tau$ be the curve $f^r \circ \sigma|_J$, which is contained in the ball $B(z,\epsilon)$ with $z = f^r(x)$, we need to find the set of parameters $t \in J$ such that $f \circ \tau(t) \in B(f(z),\epsilon)$. This set might be quite complicated (for instance, it might have infinitely many connected components) and it will be technically convenient to overestimate it slightly; that is, we will actually show that it is contained in a union of closed intervals which we shall be able to bound in number.

The proof of Lemma \ref{lem:fund1} thus boils down to a one-step estimate. Much as we did with the time rescaling earlier on, by taking advantage of a degree of freedom related to the metric size of the problem we can reduce the one-step estimate to the convenient case when the derivatives of $f$ of orders $2, \ldots, k$ are bounded above by $1$ and the radius of the ball $B$ is $\epsilon = 1$. We therefore state the one-step estimate for a solid torus $V_{\lambda}$ which is just the solid torus $V$ scaled up by a factor $\lambda \geq 1$:

\begin{lemma} \label{lem:noepsilon} Let $f_{\lambda} : V_{\lambda} \longrightarrow V_{\lambda}$ be a  $\mathcal{C}^k$ map with $\|d^s f\|_{\infty} \leq 1$ for every $s = 2, \ldots, k$. Let $\sigma_{\lambda} : [0,1] \longrightarrow V_{\lambda}$ be a $\mathcal{C}^k$ normalized curve and $B \subseteq V_{\lambda}$ a closed ball of radius $1$ and an arbitrary center. Finally, let $S = \{t \in [0,1] : f_{\lambda} \circ \sigma_{\lambda}(t) \in B\}$. Then there exists a family of no more than $\mu(k) M(f_{\lambda})^{\nicefrac{1}{k}}$ intervals $J_i$ such that:
\begin{itemize}
    \item[(i)] The set $S$ is covered by the $J_i$.
    \item[(ii)] For each of the $J_i$ the curve $f_{\lambda} \circ \sigma_{\lambda}|_{J_i}$ is normalized.
\end{itemize}
\end{lemma}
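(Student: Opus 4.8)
The plan is a Yomdin--Gromov type reparametrization argument for the curve $g := f_\lambda \circ \sigma_\lambda$. The idea is to first cut $[0,1]$ into about $M(f_\lambda)^{1/k}$ intervals on which $g$, together with its derivatives up to order $k$, is well approximated by a polynomial of degree $k-1$, and then to dispose of the polynomial part at a cost depending only on $k$. Throughout I write $M := M(f_\lambda)$ and, since the statement is essentially vacuous otherwise, I assume $M \ge 1$.

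First I would record a derivative estimate for $g$. Fa\`a di Bruno's formula expresses $g^{(j)}$ as a sum, indexed by the partitions $\pi$ of $\{1,\ldots,j\}$, of terms $d^{|\pi|}f_\lambda(\sigma_\lambda)(\sigma_\lambda^{(\cdot)},\ldots,\sigma_\lambda^{(\cdot)})$; using $\|d^s f_\lambda\|_\infty \le 1$ for $2 \le s \le k$, $\|d f_\lambda\|_\infty \le M$ and the normalization of $\sigma_\lambda$, each such term has norm $\le M$, so $\|g^{(j)}\|_\infty \le a_k M$ for $1 \le j \le k$, where $a_k$ depends only on $k$. Put $\delta := (a_k M)^{-1/k}$. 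If $\delta \ge 1$ then $g$ is already normalized on $[0,1]$; otherwise partition $[0,1]$ into at most $2(a_k M)^{1/k}$ intervals $I_\nu$ of length $\le \delta$ and discard those with $g(I_\nu)\cap B = \emptyset$. On each surviving $I_\nu$ let $P_\nu$ be the degree-$(k-1)$ Taylor polynomial of $g$ at the left endpoint of $I_\nu$. Taylor's theorem with remainder, combined with the choice $a_k M \delta^k = 1$, shows that after the affine rescaling $\psi_\nu : [0,1]\to I_\nu$ the error $e_\nu := (g-P_\nu)\circ\psi_\nu$ satisfies $\|e_\nu^{(j)}\| \le 1/(k-j)! \le 1$ for $1 \le j \le k$; that is, $e_\nu$ is normalized. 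Write also $\widetilde g_\nu := g\circ\psi_\nu$ and $\widetilde P_\nu := P_\nu\circ\psi_\nu$, so $\widetilde g_\nu = \widetilde P_\nu + e_\nu$.

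It remains to handle $\widetilde P_\nu$. Translating coordinates so that $B$ is centered at the origin, a parameter $u \in [0,1]$ can lie over $S$ only if $|\widetilde g_\nu(u)| \le 1$, and then $|\widetilde P_\nu(u)| \le |\widetilde g_\nu(u)| + |e_\nu(u)| \le 2$. Hence $\psi_\nu^{-1}(S\cap I_\nu)$ is contained in the semialgebraic set $Q_\nu := \{u\in[0,1] : |\widetilde P_\nu(u)|^2 \le 4\}$, and since $|\widetilde P_\nu|^2$ is a real polynomial of degree $\le 2(k-1)$, the set $Q_\nu$ is a union of at most $c_1(k)$ closed subintervals. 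On each component $[a,b]$ of $Q_\nu$ the polynomial $\widetilde P_\nu$ has degree $\le k-1$ and norm $\le 2$, so the Markov brothers' inequality (in V.\,A. Markov's form for higher derivatives), applied componentwise and rescaled to $[a,b]$, gives $(b-a)^j \|\widetilde P_\nu^{(j)}\|_{[a,b]} \le \Lambda_{k,j}$ with $\Lambda_{k,j}$ depending only on $k$. Therefore, rescaling $[a,b]$ affinely to $[0,1]$, the curve $\widetilde g_\nu|_{[a,b]} = \widetilde P_\nu + e_\nu$ acquires derivatives up to order $k$ bounded by constants depending only on $k$ (the error $e_\nu$ contributes at most $1$, as $b-a\le 1$), and one further subdivision into $c_2(k)$ equal pieces makes it normalized. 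Composing the successive affine rescalings exhibits, inside each $I_\nu$, at most $c_1(k)c_2(k)$ subintervals covering $S\cap I_\nu$ on which $g$ is normalized, whence a total of at most $2(a_k M)^{1/k}c_1(k)c_2(k) =: \mu(k)M^{1/k}$ intervals covering $S$, as required.

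I expect the main work to lie in the last paragraph: organizing the successive affine reparametrizations so that the property of being normalized is tracked correctly, checking that the sublevel set of a polynomial of bounded degree has a number of components bounded in terms of the degree, and verifying that the constants produced by Markov's inequality depend on $k$ alone and not on $f_\lambda$, $\sigma_\lambda$ or $B$. It is also worth pointing out exactly where the radius-$1$ hypothesis on $B$ enters: it is what bounds $|\widetilde P_\nu|$ by $2$ on $Q_\nu$ and hence, through Markov, bounds the rescaled derivatives of $\widetilde P_\nu$ by $k$-dependent constants. Without such a bound one can only control $\|\widetilde P_\nu^{(j)}\|$ by a quantity of order $M^{1-1/k}$, which would yield $\sim M$ intervals rather than $\sim M^{1/k}$; this is the precise reason why the argument must localize to the preimage of a ball of bounded radius instead of reparametrizing all of $[0,1]$.
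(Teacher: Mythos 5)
Your proof is correct and follows essentially the same strategy as the paper's Appendix~B: bound the derivatives of $f\circ\sigma$ via the chain rule, subdivide $[0,1]$ into $\sim M(f)^{1/k}$ intervals so that on each a Taylor polynomial approximates $f\circ\sigma$ with normalized error, cover the relevant sublevel set of the polynomial by a $k$-bounded number of intervals via a root-counting argument, and normalize the polynomial part with Markov's inequality followed by one more uniform subdivision. The only differences (degree $k-1$ at the left endpoint instead of degree $k$ at the midpoint, and using the single polynomial $|\widetilde P_\nu|^2\le 4$ rather than also tracking the torus as a second semialgebraic constraint as in the paper's Lemma~\ref{lem:poly}) are cosmetic and do not affect the argument.
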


Applying this one-step case to each of the curves $f_{\lambda} \circ \sigma_{\lambda}|_{J_i}$ (after the standard affine reparametrization) and repeating this inductively as described earlier we obtain that for an $(n,1)$-dynamical ball the set $\sigma^{-1}(B)$ is covered by at most $(\mu(k) M(f_{\lambda})^{\nicefrac{1}{k}})^n$  intervals $J_{i_1 i_2 \ldots i_n}$ such that for each of them the curve $f^n_{\lambda} \circ \sigma_{\lambda}|_{J_{i_1 i_2 \ldots i_n}}$ is normalized.

\begin{proof}[Proof of Lemma \ref{lem:fund1} from Lemma \ref{lem:noepsilon}] First we scale up all the elements in our problem by a factor $\lambda \geq 1$ to be fixed later. The solid torus $V$ becomes the solid torus $V_{\lambda} := {\lambda} \cdot V$ and the map $f$ is replaced with its conjugate via the rescaling; i.e. $f_{\lambda} : V_{\lambda} \longrightarrow V_{\lambda}$ given by $f_{\lambda}(x) := {\lambda} \cdot f(x/{\lambda})$. Notice that this rescaling does not change the first derivatives of $f$ (so $M(f) = M(f_{\lambda})$) but divides its $s$th derivatives, $s \geq 1$, by a factor ${\lambda}^{s-1}$. Hence by choosing ${\lambda}$ large enough we can achieve that all the derivatives of $f_{\lambda}$ of orders $2,3,\ldots,k$ be bounded above by $1$ on $V_{\lambda}$. The cutoff value $\lambda_0$ of $\lambda$ at which this happens depends only on the derivatives of $f$ on $V$. Let $\epsilon_0(f,k) := 1/{\lambda_0}$.

Now consider an $(n,\epsilon)$-dynamical ball $B \subseteq V$, with $\epsilon < \epsilon_0$. It is straightforward to see that its scaled up version $\lambda \cdot B$ is just an $(n,\lambda \epsilon)$-dynamical ball for $f_{\lambda}$. Choose $\lambda := \nicefrac{1}{\epsilon}$ as the scaling factor, so that $\lambda B$ is actually an $(n,1)$-dynamical ball. Since $\epsilon < \epsilon_0$ we have $\lambda > \lambda_0$ and so the derivatives of $f_{\lambda}$ of order $2 \leq s \leq k$ are all bounded by $1$. This sets us under the assumptions of Lemma \ref{lem:noepsilon} save for the normalization of the curve $\sigma$. This is the origin of the $c(\sigma,\epsilon)$ factor in the statement of the lemma, as follows. Split $[0,1]$ into $N$ intervals $I_j$ of equal length $1/N$. It is then trivial to check that the standard reparameterization $\sigma_{\lambda} \circ \psi_j$ satisfies $\|d^s \sigma_{\lambda} \circ \psi_j\|_{\infty} = \lambda \|d^s \sigma\|_{\infty}/N^s$. Set $c(\sigma,\epsilon) := \max_{1 \leq s \leq k} (\lambda \|d^s \sigma\|_{\infty})^{\nicefrac{1}{s}} = \max_{1 \leq s \leq k} (\epsilon^{-1} \|d^s \sigma\|_{\infty})^{\nicefrac{1}{s}}$. Clearly by choosing $N \approx c(\sigma,\epsilon)$, by which we mean $c(\sigma,\epsilon)$ rounded up to the closest integer, we have that $\sigma_{\lambda} \circ \psi_j$ is normalized for all $j$. Then an inductive application of Lemma \ref{lem:noepsilon} as described before to each piece $\sigma_{\lambda} \circ \psi_j$ in turn shows that each $S_j := \{t \in [0,1] : \sigma_{\lambda} \circ \psi_j(t) \in \lambda \cdot B\}$ can be covered by at most $(\mu(k) M(f_{\lambda})^{\nicefrac{1}{k}})^n= (\mu(k) M(f)^{\nicefrac{1}{k}})^n$ intervals $J_{j i_1 i_2 \ldots i_n}$. Scaling back down to $V$ one has $S_j = \{t \in [0,1] : \sigma \circ \psi_j(t) \in B\}$, and so $\sigma^{-1}(B) = \bigcup_j S_j \subseteq \bigcup_{j i_1 \ldots i_n} J_{j i_1 \ldots i_n}$ is then covered by at most $c(\sigma,\epsilon) (\mu(k) M(f)^{\nicefrac{1}{k}})^n$ intervals.
\end{proof}

\subsection{The one-step estimate} Here we finally prove Lemma \ref{lem:noepsilon}. The basic strategy of the proof consists in replacing $f \circ \sigma$ with local Taylor approximations $Q$ of degree $k$ and solving the problem for these.
\medskip

{\it The polynomial case.} In all the following statements $Q : [0,1] \longrightarrow \mathbb{R}^3$ is a polynomial curve of degree $k$ which will eventually be a Taylor approximation to $f \circ \sigma$. Any number expressed in the form $\alpha(k)$ is meant to depend only on $k$ and not on $Q$, $\lambda$, etc.

(1) Let $B' \subseteq V_{\lambda}$ be a ball of radius $\nicefrac{3}{2}$ and set \[S' = \{ t \in [0,1] : Q(t) \in B'\}.\] Then $S'$ can be covered by at most $\alpha_1(k)$ intervals $J_i$ with the property that $Q(J_i)$ is contained in a ball of radius $2$.

{\it Proof.} $B'$ is the intersection of an Euclidean ball with the solid torus $V_{\lambda}$. These can be described by polynomial inequations of degrees $2$ and $4$ respectively, and so $S'$ is described by two polynomial inequalities of degrees $2k$ and $4k$. The set $S'$ is therefore a union of intervals (possibly degenerated) whose endpoints are roots of those polynomials or $0$ or $1$. Thus $S'$ consists of at most $\alpha_1(k) := 6k+2$ connected components (if all of them are singletons; if all of them are nondegenerate intervals there can be at most $3k+1$ of these). If a component of $S'$ is a singleton, we just enlarge it ever so slightly that its image under $Q$ is contained in the ball of radius $2$ concentric with $B'$.
\medskip

(2) Assume that the image of $Q$ is contained in a ball of radius $2$. Then $[0,1]$ can be partitioned into $\alpha_2(k)$ intervals $J_i$ such that the reparameterized curves $Q \circ \psi_i$ satisfy the $\nicefrac{1}{2}$--normalization condition $\|d^s (Q \circ \psi_i)\|_{\infty} \leq \nicefrac{1}{2}$ for $s = 1,\ldots,k$.

{\it Proof.} A result of Markov (see for example \cite{schduff}) implies that there exists a constant $c(k)$ such that for every polynomial $Q : [0,1] \longrightarrow \mathbb{R}^3$ of degree $k$ one has $\|d^s Q\|_{\infty} \leq c(k) \|Q\|_{\infty}$ for $s = 1,\ldots,k$. Under our assumptions this is bounded above by $2c(k)$. Thus splitting $[0,1]$ into $N \approx 4 c(k)$ intervals $J_i$ of equal length and reparameterizing affinely via $\psi_i$, each curve $Q_i := Q \circ \psi_i$ will satisfy $\|d^s(Q \circ \psi_i)\|_{\infty} = N^{-s} \|d^s Q\|_{\infty} \leq N^{-1} \|d^s Q\|_{\infty} \leq N^{-1} 2c(k) \leq \nicefrac{1}{2}$ for $s = 1, \ldots, k$.
\medskip

Combining (1) and (2) we get the following:

\begin{lemma} \label{lem:poly} Let $Q : [0,1] \longrightarrow \mathbb{R}^3$ be a polynomial curve of degree $k$. Let $B'$ be a ball of radius $\nicefrac{3}{2}$ in $V_{\lambda}$. Then the set \[S' = \{ t \in [0,1] : Q(t) \in B'\}\] can be covered by at most $\alpha(k)$ intervals $J_i$ such that $Q|_{J_i}$ is $\nicefrac{1}{2}$--normalized for each $i$.
\end{lemma}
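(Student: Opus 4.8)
The plan is to obtain Lemma \ref{lem:poly} by simply composing estimates (1) and (2) just proved; all the analytic content is already contained in those two steps (notably the Markov--Bernstein bound invoked in (2)), so what remains is bookkeeping about the affine reparametrizations implicit in the normalization condition.

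First I would apply (1) to the set $S' = \{t \in [0,1] : Q(t) \in B'\}$. Since $B'$ is a ball of radius $\nicefrac{3}{2}$ inside $V_\lambda$, this yields a cover of $S'$ by at most $\alpha_1(k)$ intervals $J_1,\dots,J_m$ (so $m \le \alpha_1(k)$) such that each $Q(J_i)$ lies in a ball of radius $2$. Next, for each fixed $i$ let $\psi_i : [0,1] \longrightarrow J_i$ be the orientation-preserving affine bijection. Then $Q \circ \psi_i$ is again a polynomial curve of degree at most $k$ (precomposition with an affine map does not raise the degree) whose image is contained in a ball of radius $2$, so (2) applies to it: it produces a partition of $[0,1]$ into at most $\alpha_2(k)$ subintervals, on each of which $Q\circ\psi_i$ is $\nicefrac{1}{2}$--normalized after the standard affine reparametrization. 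Transporting this partition back by $\psi_i$ gives a partition of $J_i$ into intervals $J_{i,j}$ with $j$ ranging over at most $\alpha_2(k)$ values, and because a composition of orientation-preserving affine bijections is again one, the standard reparametrization of $J_{i,j}$ onto $[0,1]$ factors as $\psi_i$ followed by the standard reparametrization of the corresponding subinterval of $[0,1]$. Hence $Q|_{J_{i,j}}$ is $\nicefrac{1}{2}$--normalized in the sense of the definition.

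Finally I would collect the intervals: $S' \subseteq \bigcup_i J_i = \bigcup_{i,j} J_{i,j}$ is covered by at most $\alpha_1(k)\,\alpha_2(k)$ intervals, on each of which $Q$ is $\nicefrac{1}{2}$--normalized, so setting $\alpha(k) := \alpha_1(k)\,\alpha_2(k)$ completes the proof. There is no real obstacle here; the only point deserving a moment's attention is the compatibility of the $\nicefrac{1}{2}$--normalization with the two successive affine changes of parameter, which is settled by the observation just made that affine maps compose to affine maps and do not increase the polynomial degree.
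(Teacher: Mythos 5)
Your proof is correct and follows essentially the same route as the paper: apply estimate (1) to cover $S'$ by at most $\alpha_1(k)$ intervals, reparametrize each affinely and apply estimate (2), then set $\alpha(k)=\alpha_1(k)\alpha_2(k)$. The extra care you take in verifying that the $\nicefrac{1}{2}$--normalization of $Q|_{J_{i,j}}$ follows because the two successive orientation-preserving affine reparametrizations compose to the standard one is a detail the paper leaves implicit, but it is not a departure from the argument.
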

\begin{proof} By (1) the set $S'$ can be covered by $\alpha_1(k)$ intervals $J_i$. For each of these we consider the reparameterized curve $Q \circ \psi_i$. Its image is contained in a ball of radius $2$ as stated in (1), and so applying (2) to $Q \circ \psi_i$ the interval $[0,1]$ can be split into no more than $\alpha_2(k)$ intervals such that $(Q \circ \psi_i)$ restricted to each of them is $\nicefrac{1}{2}$-normalized.
\end{proof}

\medskip

{\it Taylor approximations.} The basic strategy of the proof of Lemma \ref{lem:noepsilon} will consist in partitioning the interval $[0,1]$ into subintervals and replacing $f \circ \sigma$ with its Taylor polynomial of degree $k$ on each of those subintervals. We now make some comments about these approximations. Recall that we have a $\mathcal{C}^k$ map $f_{\lambda} : V_{\lambda} \longrightarrow V_{\lambda}$ with $\|d^s f_{\lambda}\|_{\infty} \leq 1$ for $s = 2, \ldots, k$ and also a $\mathcal{C}^k$ normalized curve $\sigma_{\lambda} : [0,1] \longrightarrow V_{\lambda}$. For notational ease we will henceforth drop the subindex $\lambda$ from all objects except for $V_{\lambda}$ itself.

(1) The derivatives of order $s$ of $f \circ \sigma$ can be expressed via the chain rule and, using the fact that $f$ has derivatives $\leq 1$ for orders $s \geq 2$ and $\sigma$ has derivatives $\leq 1$ for all orders, we see that for every $s = 1, \ldots, k$ one has $\|d^s(f \circ \sigma)\| \leq A(k) + B(k)M(f)$ for some constants $A(k)$ and $B(k)$ that depend only on $k$. Assuming that $M(f) \geq 1$ this can be written as $\|d^s (f \circ \sigma)\|_{\infty} \leq c_1(k) M(f)$ for some constant $c_1(k) = A(k) + B(k)$ which depends only on $k$. When $M(f) < 1$ we can just write it as $\|d^s (f \circ \sigma)\|_{\infty} \leq c_1(k)$, but this case will be somewhat trivial in what follows.

(2) Let $J \subseteq [0,1]$ be a closed interval of length $\delta$ (which one thinks of as being short). Let $t_0$ be the midpoint of $J$ and denote by $P$ the Taylor polynomial of degree $k$ for $f \circ \sigma$ at $t_0$. Starting with the very crude trivial bound $\|(f \circ \sigma)^{(k)}(t) - (f \circ \sigma)^{(k)}(t_0) \| \leq 2 \|(f \circ \sigma)^{(k)}\|_{\infty}$ and integrating it one obtains \[\|(f \circ \sigma)^{(s)}|_J - P^{(s)}|_J\|_{\infty} \leq \frac{2 \|(f \circ \sigma)^{(k)}\|_{\infty}}{(k-s)!} \delta^{k-s} \leq 2c_1(k)M(f) \delta^{k-s}\] for $s = 0,1,\ldots,k$.  Suppose that $\psi : [0,1] \longrightarrow J$ is the standard affine reparameterization, so that $\psi' = \delta$. Clearly $(f \circ \sigma \circ \psi)^{(s)} = \delta^s (f \circ \sigma)^{(s)} \circ \psi$, and so  \begin{equation} \label{eq:cotaderivadas} \|(f \circ \sigma \circ \psi)^{(s)} - (P \circ \psi)^{(s)}\|_{\infty} = \delta^s \|(f \circ \sigma)^{(s)}|_J-P^{(s)}_J\|_{\infty} \leq 2c_1(k) M(f) \delta^k.\end{equation}
\medskip

{\it Proof of Lemma \ref{lem:noepsilon}.} We are finally ready to prove the one-step estimate. Let $B \subseteq V_{\lambda}$ be a ball of radius $1$ and denote by $B'$ the closed ball with the same center as $B$ but radius  $\nicefrac{3}{2}$. As in the statement of the lemma, set $S = \{t \in [0,1] : f \circ \sigma(t) \in B\}$. 

Partition $[0,1]$ as the union of $N \approx (4 c_1(k) M(f))^{\nicefrac{1}{k}}$ closed intervals $J_i$ of equal length $\delta = 1/N$, so that $2 c_1(k)M(f) \delta^k \leq \nicefrac{1}{2}$. Denote by $P_i$ the Taylor approximation to $(f \circ \sigma)|_{J_i}$ as described above and by $\psi_i : [0,1] \longrightarrow J_i$ the standard affine reparametrization.
Let us focus on one of the intervals $J_i$. By \eqref{eq:cotaderivadas} we have that $\|(f \circ \sigma \circ \psi_i) - (P_i \circ \psi_i)\|_{\infty} \leq \nicefrac{1}{2}$. This ensures that if $t \in S \cap J_i$ then $P_i(t)$ belongs to the ball $B'$. Therefore $S \cap J_i \subseteq \{t \in J_i : P_i(t) \in B'\} = \psi_i^{-1}(\{t \in [0,1] : P_i \circ \psi_i(t) \in B'\})$. By Lemma \ref{lem:poly} applied to $Q_i = P_i \circ \psi_i$ the set $\{t \in [0,1] : P_i \circ \psi_i(t) \in B'\}$ can be covered by at most $\alpha(k)$ intervals $J'_{ij}$ such that $(P_i \circ \psi_i)|_{J'_{ij}}$ is $1/2$--normalized. Taking the preimage of these via $\psi_i$ produces a family of at most $\alpha(k)$ intervals $J_{ij}$ which cover $S \cap J_i$. Therefore $S$ itself is covered by no more than $\alpha(k) N \approx \mu(k) M(f)^{\nicefrac{1}{k}}$ intervals $J_{ij}$, where $\mu(k)$ simply absorbs all factors other than $M(f)$.

To conclude the proof it only remains to show that each $f \circ \sigma|_{J_{ij}}$ is normalized. Let $\psi_{ij} : [0,1] \longrightarrow J_{ij}$ and $\psi'_{ij} : [0,1] \longrightarrow J'_{ij}$ be the standard affine reparameterizations. Observe that the definition of $J_{ij}$ as $\psi_i^{-1}(J'_{ij})$ ensures that $\psi_{ij} = \psi_i \circ \psi'_{ij}$. The condition that $(P_i \circ \psi_i)|_{J'_{ij}}$ be $1/2$--normalized thus reads $\|d^s(P_i \circ \psi_{ij})\| \leq \nicefrac{1}{2}$ for $s = 1,\ldots,k$. We also have \[\|d^s(f \circ \sigma \circ \psi_{ij}) - d^s(P_i \circ \psi_{ij})\| \leq \|d^s(f \circ \sigma \circ \psi_i) - d^s(P_i \circ \psi_i)\| \leq \nicefrac{1}{2}\] where the first inequality follows because $\psi'_{ij}$ has a derivative less than $1$ and the second from \eqref{eq:cotaderivadas} and the choice of $N$. Therefore $\|d^s(f \circ \sigma \circ \psi_{ij})\| \leq 1$ as required.

\bibliographystyle{plain}

\bibliography{biblio}

\end{document}